\newtheorem{theorem}{Theorem}[section]
\newtheorem{definition}[theorem]{Definition}
\newtheorem{lemma}[theorem]{Lemma}
\newtheorem{corollary}[theorem]{Corollary}
\newtheorem{remark}[theorem]{Remark}
\newcommand{\R}{{\mathbb R}}
\newcommand{\N}{{\mathbb N}}
\newcommand{\Z}{{\mathbb Z}}
\newcommand{\C}{{\mathbb C}}
\newcommand{\nn}{\nonumber}
\newcommand{\be}{\begin{equation}}
\newcommand{\ee}{\end{equation}}
\newcommand{\ol}{\overline}
\newcommand{\ti}{\tilde}
\newcommand{\E}{\mathrm{e}}
\newcommand{\sgn}{\mathrm{sgn}}
\newcommand{\tr}{\mathrm{tr}}
\DeclareMathOperator{\Ran}{Ran}
\newcommand{\floor}[1]{\lfloor#1 \rfloor}
\newcommand{\ceil}[1]{\lceil#1 \rceil}
\DeclareMathOperator{\arccot}{arccot}
\newcommand{\eps}{\varepsilon}
\newcommand{\sig}{\sigma}
\newcommand{\lam}{\lambda}
\numberwithin{equation}{section}
\begin{document}

\title[Effective Pr\"ufer Angles]{Effective Pr\"ufer Angles and
Relative Oscillation Criteria}

\author[H. Kr\"uger]{Helge Kr\"uger}
\address{Department of Mathematics\\ Rice University\\ Houston\\ TX 77005\\ USA}
\email{\href{mailto:helge.krueger@rice.edu}{helge.krueger@rice.edu}}
\urladdr{\href{http://math.rice.edu/~hk7/}{http://math.rice.edu/\~{}hk7/}}

\author[G. Teschl]{Gerald Teschl}
\address{Faculty of Mathematics\\
Nordbergstrasse 15\\ 1090 Wien\\ Austria\\ and International Erwin Schr\"odinger
Institute for Mathematical Physics, Boltzmanngasse 9\\ 1090 Wien\\ Austria}
\email{\href{mailto:Gerald.Teschl@univie.ac.at}{Gerald.Teschl@univie.ac.at}}
\urladdr{\href{http://www.mat.univie.ac.at/~gerald/}{http://www.mat.univie.ac.at/\~{}gerald/}}

%\thanks{{\it To appear in ...}}
\thanks{{\it Research supported by the Austrian Science Fund (FWF) under Grant No.\ Y330}}

\keywords{Sturm--Liouville operators, oscillation theory}
\subjclass[2000]{Primary 34C10, 34B24; Secondary 34L20, 34L05}

\begin{abstract}
We present a streamlined approach to relative oscillation criteria based
on effective Pr\"ufer angles adapted to the use at the edges of the essential
spectrum.

Based on this we provided a new scale of oscillation criteria for general
Sturm--Liouville operators which answer the question whether a perturbation
inserts a finite or an infinite number of eigenvalues into an essential
spectral gap. As a special case we recover and generalize the Gesztesy--\"Unal criterion
(which works below the spectrum and contains classical criteria by Kneser, Hartman, Hille, and
Weber) and the well-known results by Rofe-Beketov including the extensions by Schmidt.
\end{abstract}

\maketitle

\section{Introduction}

In this article we want to use relative oscillation theory
and apply it to obtain criteria for when an edge of an essential spectral gap
is an accumulation point of eigenvalues for Sturm--Liouville operators
\be
\tau = \left(-\frac{d}{dx} p \frac{d}{dx} + q\right),
\quad  \text{on}\quad (a,b).
\ee
Without loss of generality we will assume that $a\in\R$ is a regular endpoint
and that $b$ is limit point. Furthermore, we always assume the usual
local integrability assumptions on the coefficients (see Section~\ref{sec:res}).

We will assume that $H_0$ is a given background operator associated with
$\tau_0 = (-\frac{d}{dx} p_0 \frac{d}{dx} + q_0)$ (think e.g.\ of a periodic
operator) and that $E$ is a boundary point of the essential spectrum of $H_0$
(which is not an accumulation point of eigenvalues). Then we want to know
when a perturbation $\tau_1 = (-\frac{d}{dx} p_1 \frac{d}{dx} + q_1)$
gives rise to an infinite number of eigenvalues accumulating at $E$.
By relative oscillation theory, this question reduces to the question of when
a given operator $\tau_1-E$ is relatively oscillatory with respect to $\tau_0-E$
(cf.\ Section~\ref{sec:relosc}).

In the simplest case $\tau_0 = - \frac{d^2}{dx^2}$, $E=0$, Kneser \cite{kn} showed that
the borderline case is given by ($p_1=p_0=1$)
\be
q_1(x) = \frac{\mu}{x^2},
\ee
where the critical constant is given by $\mu_c=-\frac{1}{4}$. That is,
for $\mu<\mu_c$ the perturbation is oscillatory and for $\mu>\mu_c$ it
is nonoscillatory. In fact, later on Hartman \cite{hrt1}, Hille \cite{hl1}, and Weber \cite{wbr}
gave a whole scale of criteria addressing the case $\mu=\mu_c$. Recently this result was further
generalized by Gesztesy and \"Unal \cite{gu}, who showed that for Sturm--Liouville
operators (with $p_1=p_0$) the borderline case for $\tau_0-E$, $E=\inf \sig(H_0)$, is given by
\be \label{guebc}
q_1(x) = q_0(x) + \frac{\mu}{p_0(x) u_0(x)^2 v_0(x)^2},
\ee
where the critical constant is again $\mu_c=-\frac{1}{4}$. Here
$u_0$ is a minimal (also principal) positive solution of $\tau_0 u =0$ and $v_0$ is a second
linearly independent solution with Wronskian $W(u_0,v_0)=1$.
Since for $p_0=1$, $q_0=0$ we have $u_0=1$ and $v_0=x$, this result contains
Kneser's result as a special case. Moreover, they also provided a scale
of criteria for the case $\mu=\mu_c$.

While Kneser's result is classical, the analogous question for a periodic background $q_0$
(and $p_0=1$) was answered much later by Rofe-Beketov in a series of papers
\cite{rb1}--\cite{rb5} in which he eventually showed that the borderline case is again given by
\be\label{rbbc}
q_1(x) = q_0(x) + \frac{\mu}{x^2},
\ee
where the critical constant $\mu_c$ can be expressed in terms of the Floquet
discriminant. His result was recently extended by Schmidt \cite{kms} to the case
$p_0=p_1 \ne 1$ and Schmidt also provided the second term in the
case $\mu=\mu_c$.

These results raised the question for us, if there is a generalization of the
Gesztesy--\"Unal result which holds inside any essential spectral gap (and not just the lowest).
Clearly (\ref{guebc}) makes no sense, since above the lowest edge of the essential spectrum,
all solutions of $\tau_0 u = E u$ have an infinite number of zeros.
However, in the periodic background case, as in the constant background case, there
is one solution $u_0$ which is bounded and a second solution $v_0$  which grows like $x$.
Hence, at least formally, the Gesztesy--\"Unal result explains why the borderline
case is given by (\ref{rbbc}). However, their proof has positivity of $H_0-E$ as
the main ingredient and thus cannot be generalized to the case above the
infimum of the spectrum.

In summary, there are two natural open problems which we want to address in this
paper: First of all, the whole scale of oscillation criteria inside essential spectral gaps for
critically perturbed periodic operators. Secondly, what is the analog of the
Gesztesy--\"Unal result (\ref{guebc}) inside essential spectral gaps? Based
on the original ideas of Rofe-Beketov and the extensions by Schmidt, we
will provide a streamlined approach to the subject which will recover and at the
same time extend all previously mentioned results. For example, we will derive an averaged
version of the Gesztesy--\"Unal result (including the whole scale) which, to the best
of our knowledge, is new even in the case originally considered by Kneser.

Concerning the Gesztesy--\"Unal result we show that if $u_0$, $v_0$ are two linearly
independent solutions of $\tau_0 u = E u$ with Wronskian $W(u_0,v_0)=1$ such that
there are functions $\alpha(x)>0$ and $\beta(x) \lessgtr 0$ satisfying
$u_0(x)=O(\alpha(x))$ and $v_0(x) - \beta(x) u_0(x)=O(\alpha(x))$ as $x\to\infty$.
Then $(p_0=p_1)$
\be
q_1(x) = q_0(x) + \frac{\mu\, \beta'(x)}{\alpha(x)^2 \beta(x)^2},
\ee
is relatively oscillatory if $\limsup_{x\to\infty} \frac{\mu}{\ell} \int_x^{x+\ell} u_0(t)^2 \alpha(t)^{-2} dt
< -\frac{1}{4}$ and relatively non\-oscillatory if $\liminf_{x\to\infty} \frac{\mu}{\ell}
\int_x^{x+\ell} u_0(t)^2 \alpha(t)^{-2} dt > -\frac{1}{4}$.
By virtue of d'Alembert's formula, this reduces to (\ref{guebc}) for $E$ at the bottom of the spectrum,
where we can set $\alpha=u_0$ and $\beta=\frac{v_0}{u_0}=\int p_0^{-1} u_0^{-2}$.

We will also be able to include the case $p_0\ne p_1$ with no additional effort and
we will provide a full scale of criteria in all cases.

\section{Main results}
\label{sec:res}

In this section we will summarize our main results. We will go from the simplest to
the most general case rather than the other way round for two reasons: First of all,
in our proofs, which will be given in Section~\ref{sec:epa}, we will also advance in this
direction and show how the general case follows from the special one. In particular,
this approach will allow for much simpler proofs. Secondly,
several of the special cases can be proven under somewhat weaker assumptions.

We will consider Sturm--Liouville operators on $L^{2}((a,b), r\,dx)$
with $-\infty \leq a<b \le \infty$ of the form
\begin{equation} \label{stli}
\tau = \frac{1}{r} \Big(- \frac{d}{dx} p \frac{d}{dx} + q \Big),
\end{equation}
where the coefficients $p,q,r$ are real-valued
satisfying
\begin{equation}
p^{-1},q,r \in L^1_{loc}(a,b), \quad p,r>0.
\end{equation}
We will use $\tau$ to describe the formal differentiation expression and
$H$ for the operator given by $\tau$ with separated boundary conditions at
$a$ and/or $b$.

If $a$ (resp.\ $b$) is finite and $q,p^{-1},r$ are in addition integrable
near $a$ (resp.\ $b$), we will say $a$ (resp.\ $b$) is a \textit{regular}
endpoint.

Our objective is to compare two Sturm--Liouville operators $\tau_0$ and $\tau_1$
given by
\be \label{stlij}
\tau_j = \frac{1}{r} \Big(- \frac{d}{dx} p_j \frac{d}{dx} + q_j \Big), \qquad j=0,1.
\ee
Throughout this paper we will abbreviate
\be
\Delta p = \frac{1}{p_0} - \frac{1}{p_1} = \frac{p_1-p_0}{p_1 p_0}, \qquad
\Delta q = q_1 -q_0.
\ee
Moreover, without loss of generality we will assume that for both operators $a\in\R$
is a regular endpoint and that $b$ is limit point (i.e., $(\tau - z) u$ has at
most one $L^2$ solution near $b$). 

We begin with the case where $E$ is the infimum of the spectrum of $H_0$.
Suppose that $(\tau_0-E)u=0$ has a positive solution and let $u_0$ be the
corresponding minimal (principal) positive solution of $(\tau_0-E) u_0= 0$ near $b$, that is,
$$
\int^b \frac{dt}{p_0(t)u_0(t)^2} = \infty.
$$
By d'Alembert's formula there is a second linearly independent solution
\be\label{eq:dAl}
v_0(x) = u_0(x) \int_a^x \frac{dt}{p_0(t) u_0(t)^2}
\ee
satisfying $W(u_0,v_0)=1$.

Recall that $\tau_1-E$ is called nonoscillatory if one solutions of
$(\tau_1-E)u$ has a finite number of zeros in $(a,b)$. By Sturm's
comparison theorem, this is then the case for all (nontrivial) solutions.

\begin{theorem}\label{thm:gu}
Suppose $\tau_0-E$ has a positive solution and  let $u_0$ be a minimal positive solution.
Define $v_0$ by d'Alembert's formula (\ref{eq:dAl}) and suppose
\be\label{cond:gu}
\lim_{x\to b} p_0 v_0\, p_0 u_0' \Delta p = \lim_{x\to b} p_0 \Delta p = 0.
\ee
Then $\tau_1-E$ is oscillatory if
\be
\limsup_{x\to b} p_0 v_0^2 ( u_0^2 \Delta q + (p_0 u_0')^2 \Delta p) < -\frac{1}{4}
\ee
and nonoscillatory if
\be
\liminf_{x\to b} p_0 v_0^2 ( u_0^2 \Delta q + (p_0 u_0')^2 \Delta p) > -\frac{1}{4}.
\ee
\end{theorem}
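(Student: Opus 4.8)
The plan is to remove the background by a Liouville transformation and thereby reduce the whole statement to the classical Kneser oscillation criterion for the one-dimensional Schr\"odinger equation near $+\infty$. Since $\tau_0-E$ admits a positive minimal solution $u_0$, the substitution $u=u_0\,y$ together with the new independent variable
\be
s=\int_a^x\frac{dt}{p_0(t)u_0(t)^2}=\frac{v_0(x)}{u_0(x)}
\ee
is well defined, and because $u_0$ is principal we have $s\to b$ sent to $s\to\infty$ as $x\to b$. As $u_0>0$, zeros of $u$ correspond one-to-one to zeros of $y$, and since $s$ is strictly increasing, zeros accumulate at $b$ precisely when the transformed zeros accumulate at $s=\infty$. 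Hence $\tau_1-E$ is oscillatory if and only if the transformed equation is oscillatory at $s=\infty$, and oscillation is preserved by the transformation.

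Next I would carry out the transformation explicitly. Applied to $\tau_0-E$ this substitution is exactly the Liouville transformation that turns $\tau_0-E$ into the free operator $-\tfrac{d^2}{ds^2}$, the reference solutions $u_0$ and $v_0=s\,u_0$ becoming the constant and linear solutions $1$ and $s$. Applied to $(\tau_1-E)u=0$ it produces a Schr\"odinger equation $-\ddot y+\tilde Q(s)\,y=0$ (dots denoting $d/ds$), whose transformed potential is obtained by inserting $u=u_0y$ into $-(p_1u')'+q_1u=E\,r\,u$ and subtracting the corresponding identity for $u_0$. The $q$-perturbation enters through $u_0^2\,\Delta q$ and the $p$-perturbation through the derivative coupling $(p_0u_0')^2\,\Delta p$, up to first-order and boundary terms built from $p_0\,\Delta p$ and $p_0v_0\,p_0u_0'\,\Delta p$. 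The content of hypothesis \eqref{cond:gu} is precisely that these extra terms are negligible at $b$, so that after multiplying by $s^2=(v_0/u_0)^2$ one obtains
\be
s^2\,\tilde Q(s)=p_0v_0^2\bigl(u_0^2\,\Delta q+(p_0u_0')^2\,\Delta p\bigr)+o(1),\qquad x\to b.
\ee

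Finally I would invoke the classical Kneser criterion for $-\ddot y+\tilde Q\,y=0$ at $s=\infty$: the equation is oscillatory if $\limsup_{s\to\infty}s^2\tilde Q(s)<-\tfrac14$ and nonoscillatory if $\liminf_{s\to\infty}s^2\tilde Q(s)>-\tfrac14$. Since the $o(1)$ error does not affect the $\limsup$ or the $\liminf$, combining this with the identity above yields exactly the two stated inequalities. Note that in the $s$-coordinate the Pr\"ufer angle of the (asymptotically free) equation $-\ddot y+\tilde Q y=0$ is precisely the \emph{effective Pr\"ufer angle} of the title; the Kneser step is then nothing but comparing this angle with the Pr\"ufer angle of the Euler equation $-\ddot y+\tfrac{\gamma}{s^2}y=0$, whose critical coupling is $\gamma=-\tfrac14$.

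The main obstacle I expect is the explicit transformation in the genuinely non-trivial case $p_0\neq p_1$: there the substitution $u=u_0y$ does not immediately yield pure potential form, and one must handle the quasi-derivative mismatch and integrate by parts to isolate the coefficient $(p_0u_0')^2\Delta p$. Tracking these manipulations and showing that the leftover terms are exactly the combinations appearing in \eqref{cond:gu}, so that they vanish in the limit, is the computational heart of the argument. A lesser point is that the hypotheses only control a $\limsup$ and a $\liminf$ rather than a genuine limit, so the comparison with the Euler equation must be phrased as a one-sided Sturm comparison valid for eventually sub- or super-critical coefficients, which is standard once the reduction is in place.
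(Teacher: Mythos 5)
Your reduction is correct and complete only in the case $\Delta p=0$: with $u=u_0y$ and $s=v_0/u_0$ one finds $-(p_0u_0^2y')'+\Delta q\,u_0^2y=0$, hence $-\ddot y+p_0u_0^4\Delta q\,y=0$ and $s^2\tilde Q=p_0v_0^2u_0^2\Delta q$ exactly, so Kneser's criterion (in its one-sided Sturm-comparison form) finishes the argument. That much is a legitimate classical alternative to the paper's route, and your closing observation that $s$ coincides with the paper's Kepler parameter $\beta=v_0/u_0$ is accurate.

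The genuine gap is the step you defer, namely $p_1\neq p_0$, and it is not a bookkeeping exercise inside your framework. Substituting $u=u_0y$ into $-(p_1u')'+(q_1-Er)u=0$ and using $(p_0u_0')'=(q_0-Er)u_0$ gives, after multiplying by $u_0$,
\be
-(p_1u_0^2y')'+\Big(\Delta q\,u_0^2-u_0\big((p_1-p_0)u_0'\big)'\Big)y=0.
\ee
Two obstructions appear. First, the coefficient contains the derivative of $(p_1-p_0)u_0'=p_1\Delta p\,(p_0u_0')$, which under the standing hypotheses ($p_1^{-1}\in L^1_{loc}$ only) need not exist as a function; there is therefore no pointwise potential $\tilde Q$ to which Kneser's criterion can be applied, and no amount of ``integration by parts'' restores one without leaving the ODE formulation. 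Second, the leading coefficient is $p_1u_0^2$, not $p_0u_0^2$, so the natural Liouville variable is $\int(p_1u_0^2)^{-1}$ rather than your $s$, and reconciling the two produces further terms in $\Delta p$. The only way to make the combination $(p_0u_0')^2\Delta p$ emerge, with precisely the cross terms $p_0v_0\,p_0u_0'\Delta p$ and $p_0\Delta p$ that hypothesis (\ref{cond:gu}) kills, is to pass to a variable in which $\Delta p$ is never differentiated. That is exactly what the paper does: the modified Wronskian satisfies $W'(u_0,u_1)=\Delta q\,u_0u_1+\Delta p\,p_0u_0'\,p_1u_1'$ with $\Delta p$ undifferentiated, and its effective Pr\"ufer angle $\varphi$ (Kepler parameter $\beta=v_0/u_0$) obeys
\be
\varphi'=\rho\Big(\sin^2\varphi+\sin\varphi\cos\varphi-p_0v_0^2\big(u_0^2\Delta q+(p_0u_0')^2\Delta p\big)\cos^2\varphi\Big)+o(\rho),\qquad \rho=(p_0u_0v_0)^{-1},
\ee
the $o(\rho)$ coming from (\ref{cond:gu}); boundedness versus divergence of $\varphi$ is then settled by an elementary ODE lemma. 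So to close your gap you would have to abandon the pointwise transformation of the equation and adopt the Wronskian formulation, at which point you have reproduced the paper's proof rather than given an independent one.
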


\begin{remark}
(i). If $u_0$ is a positive solution which is not minimal near $b$,
that is $\int^b p_0(t)^{-1} u_0(t)^{-2} dt<\infty$, then
$$
v_0(x) = u_0(x) \int_x^b \frac{dt}{p_0(t) u_0(t)^2}
$$
is a minimal positive solution.

(ii). Clearly, the requirement that $\tau_0-E$ has a positive solution can be weakened to 
$\tau_0-E$ being nonoscillatory. In fact, after increasing $a$ beyond the last zero
of some solution, we can reduce the nonoscillatory case to the positive one.

(iii). Note that the coefficient $r$ does not enter since we have chosen it to be the
same for $\tau_0$ and $\tau_1$.
\end{remark}

\noindent
The special case $\Delta p=0$ is the Gesztesy--\"Unal oscillation criterion \cite{gu}. It is not
hard to see (cf.\ Section~\ref{sec:floq}), that it can be used to give a simple proof of
Rofe-Beketov's result at the infimum of the essential spectrum (another simple proof for
this case was given by Schmidt in \cite{kms1}, which also contains nice
applications to the spectrum of radially periodic Schr\"odinger operators in the plane).
Moreover, it is only the first one in a whole scale of oscillation criteria. To get the remaining ones,
we start by demonstrating that Kneser's classical result together with all its generalizations
follows as a special case.

To see this, we recall the iterated logarithm $\log_n(x)$ which is defined recursively via
$$
\log_0(x) = x, \qquad \log_n(x) = \log(\log_{n-1}(x)).
$$
Here we use the convention $\log(x)=\log|x|$ for negative values of $x$. Then
$\log_n(x)$ will be continuous for $x>\E_{n-1}$ and positive for $x>\E_n$, where
$\E_{-1}=-\infty$ and $\E_n=\E^{\E_{n-1}}$. Abbreviate further
$$
L_n(x) = \frac{1}{\log_{n+1}'(x)} = \prod_{j=0}^n \log_j(x),\qquad
Q_n(x) = -\frac{1}{4} \sum_{j=0}^{n-1} \frac{1}{L_j(x)^2}.
$$
Here and in what follows the usual convention that $\sum_{j=0}^{-1} \equiv 0$ is used,
that  is, $Q_0(x)=0$.

\begin{corollary} \label{cor:khwh}
Fix some $n\in\N_0$ and $(a,b)=(\E_n,\infty)$. Let
$$
p_0(x) = 1, \qquad
q_0(x) = Q_n(x).
$$
and suppose
\be\label{eq:khwh}
p_1(x) = 1 +o\Big(\frac{x}{L_n(x)}\Big).
\ee
Then $\tau_1$ is oscillatory if
\be
\limsup_{x\to\infty} L_n(x)^2 \left( \Delta q(x) + \frac{\delta_n}{4 x^2} \Delta p(x) \right) < -\frac{1}{4}
\ee
and nonoscillatory if
\be
\liminf_{x\to\infty} L_n(x)^2 \left( \Delta q(x) + \frac{\delta_n}{4 x^2} \Delta p(x) \right) > -\frac{1}{4},
\ee
where $\delta_n = 0$ for $n=0$ and $\delta_n =1$ for  $n \ge 1$.
\end{corollary}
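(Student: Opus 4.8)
The plan is to obtain the corollary as a direct specialization of Theorem~\ref{thm:gu}, applied with $E=0$, $p_0=1$, and $q_0=Q_n$ on $(\E_n,\infty)$; its hypothesis (existence of a positive solution of $\tau_0u_0=0$) will be met by the explicit $u_0$ produced below. Everything then comes down to writing the principal solution in closed form and reading off the quantity $p_0 v_0^2\big(u_0^2\Delta q+(p_0u_0')^2\Delta p\big)$ that governs the dichotomy.

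First I would guess and then verify that
\[
u_0(x)=\sqrt{L_{n-1}(x)}=\Big(\prod_{j=0}^{n-1}\log_j(x)\Big)^{1/2}
\]
is the required solution (with the convention $L_{-1}=1$, so $u_0\equiv1$ when $n=0$); it is positive on $(\E_n,\infty)$ since every factor is. The verification is a Riccati computation: with $w=u_0'/u_0=\tfrac12\sum_{j=0}^{n-1}L_j^{-1}=:\tfrac12 s$ (using $\log_j'/\log_j=L_j^{-1}$, hence $L_{n-1}'/L_{n-1}=s$), one has $u_0''/u_0=w'+w^2$, and since $w'=-\tfrac14 s^2-\tfrac14\sum_{j=0}^{n-1}L_j^{-2}$ while $w^2=\tfrac14 s^2$, the $s^2$ terms cancel exactly and leave $u_0''/u_0=-\tfrac14\sum_{j=0}^{n-1}L_j^{-2}=Q_n$. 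Minimality is immediate, as $\int^\infty L_{n-1}^{-1}\,dt=\int^\infty\log_n'(t)\,dt=\infty$.

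Next I would compute $v_0$ from d'Alembert's formula (\ref{eq:dAl}); here the choice $a=\E_n$ pays off, because $\log_n(\E_n)=0$ gives $\int_{\E_n}^x L_{n-1}^{-1}\,dt=\log_n(x)$ and hence $v_0=\sqrt{L_{n-1}}\,\log_n$ exactly. Consequently $v_0^2u_0^2=L_{n-1}^2\log_n^2=L_n^2$ and, from $(u_0')^2=\tfrac14 L_{n-1}\,s^2$, also $v_0^2(u_0')^2=\tfrac14 L_n^2\,s^2$, so Theorem~\ref{thm:gu} is driven by
\[
p_0 v_0^2\big(u_0^2\Delta q+(p_0u_0')^2\Delta p\big)=L_n^2\,\Delta q+\tfrac14 L_n^2\,s^2\,\Delta p .
\]
At this stage I would also dispatch the side conditions (\ref{cond:gu}): by (\ref{eq:khwh}) we have $\Delta p=o(x/L_n)\to0$, while $v_0u_0'=\tfrac12\log_n\,L_{n-1}'\sim \tfrac{L_n}{2x}$ gives $v_0u_0'\,\Delta p=o(1)$, so both limits vanish.

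The remaining and genuinely delicate step is to replace $s^2$ by its leading term $L_0^{-2}=x^{-2}$, i.e.\ to show that the stated expression $L_n^2\big(\Delta q+\tfrac{\delta_n}{4x^2}\Delta p\big)$ has the same $\limsup$ and $\liminf$ as the exact quantity above; the dichotomy of Theorem~\ref{thm:gu} then transfers verbatim. For $n\le1$ this is automatic, since $s=x^{-1}$ has a single term (and $\delta_0=0$ correctly discards the $\Delta p$-contribution when $u_0'\equiv0$). For $n\ge2$ one must control the error $\tfrac14 L_n^2\big(s^2-x^{-2}\big)\Delta p$ using the full strength of the normalization (\ref{eq:khwh}); I expect the careful bookkeeping of these iterated-logarithm corrections, rather than any conceptual difficulty, to be the main obstacle, and it is precisely here that the scale $x/L_n$ prescribed for $p_1$ must be used.
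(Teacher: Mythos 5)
Your proposal follows the paper's proof of Corollary~\ref{cor:khwh} step for step: specialize Theorem~\ref{thm:gu} with $u_0=\sqrt{L_{n-1}}$ and $v_0=u_0\log_n=\sqrt{\log_n L_n}$, verify $u_0''/u_0=Q_n$ and minimality of $u_0$, check the side conditions (\ref{cond:gu}), and reduce everything to
$p_0v_0^2\bigl(u_0^2\Delta q+(p_0u_0')^2\Delta p\bigr)=L_n^2\bigl(\Delta q+\tfrac14 s^2\,\Delta p\bigr)$ with $s=\sum_{j=0}^{n-1}L_j^{-1}$. All of these computations are correct and are carried out in more detail than in the paper (your Riccati verification of $u_0''/u_0=Q_n$ is a cleaner route to the identity the paper states via $L_n'/L_n$).

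The one step you defer --- replacing $\tfrac14 s^2$ by $\tfrac{\delta_n}{4x^2}$ for $n\ge2$ --- is exactly where the paper's proof also ends: it simply records that $s=0$ for $n=0$ and $s=x^{-1}+o(x^{-1})$ for $n\ge1$, i.e.\ that the coefficient of $\Delta p$ is $\tfrac{1}{4x^2}(1+o(1))$, and leaves the transfer of the $\limsup$/$\liminf$ conditions implicit. Your caution is in fact warranted rather than a mere bookkeeping worry: under (\ref{eq:khwh}) alone the discrepancy $\tfrac14L_n^2(s^2-x^{-2})\Delta p$ is only $o\bigl(\tfrac{L_n}{x\log x}\bigr)=o\bigl(\prod_{j=2}^n\log_j(x)\bigr)$, which is $o(1)$ only for $n\le1$; for $n\ge2$ one needs either the slightly stronger hypothesis $\Delta p=O(x^2/L_n^2)$ (which makes the error $O(1/\log x)$) or a reading of the criterion with coefficient $\tfrac{1}{4x^2}(1+o(1))$. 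Note that in the paper's own internal use of Corollary~\ref{cor:khwh} (in the proof of Lemma~\ref{lem:odebnd}) one has $\Delta p=0$, so the issue never arises there. In short: no missing idea and the same route as the paper, with one final asymptotic replacement that the paper also settles by a one-line remark rather than a full estimate.
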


\begin{proof}
Observe
$$
u_0(x)= \sqrt{L_{n-1}(x)},\qquad
v_0(x) = u_0(x) \log_n(x) = \sqrt{\log_n(x) L_n(x)}
$$
(where we set $L_{-1}(x)=1$) and check
$$
q_0 = \frac{u_0''}{u_0} =
\frac{1}{4}\bigg(\frac{L_n'}{L_n}\bigg)^{\!\!2} + \frac{1}{2} \left(\frac{L_n'}{L_n}\right)'
= \frac{1}{4}\bigg(\sum_{j=1}^n \frac{1}{L_j}\bigg)^{\!\!2} -
\frac{1}{2} \sum_{j=1}^n \frac{1}{L_j} \sum_{k=1}^j \frac{1}{L_k} = Q_n
$$
using $L_n' = L_n \sum_{j=1}^n L_j^{-1}$.
Then
$$
p_0 v_0^2 ( u_0^2 \Delta q + (p_0 u_0')^2 \Delta p) = L_n(x)^2
\left( \Delta q(x) + \frac{1}{4} \Big(\sum_{j=0}^{n-1} \frac{1}{L_j(x)}\Big)^2
\Delta p(x) \right)
$$
where $\sum_{j=0}^{n-1} \frac{1}{L_j(x)} = 0$ for $n=0$ and
$\sum_{j=0}^{n-1} \frac{1}{L_j(x)} = x^{-1}+ o(x^{-1})$ for  $n \ge 1$.
\end{proof}

\noindent
The special case $n=0$ and $\Delta p=0$ is Kneser's classical result \cite{kn}.
The extension to $n\in\N_0$ and $\Delta p=0$ is due to Weber \cite{wbr}, p.53--62, and
was later rediscovered by Hartman \cite{hrt1} and Hille \cite{hl1}.

In fact, there is an analogous scale of oscillation criteria which contains
Theorem~\ref{thm:gu} as the first one $n=0$:

\begin{theorem}\label{thm:gun}
Fix $n\in\N_0$.
Suppose $\tau_0-E$ has a positive solution and  let $u_0$ be a minimal positive solution.
Define $v_0$ by d'Alembert's formula (\ref{eq:dAl}) and suppose
$$
p_0 v_0\, p_0 u_0'  \Delta p = o\big(\frac{(v_0/u_0)^2}{L_n(v_0/u_0)^2}\big), \qquad
p_0 \Delta p = o\big(\frac{(v_0/u_0)^2}{L_n(v_0/u_0)^2}\big).
$$
Then $\tau_1-E$ is oscillatory if
\be
\limsup_{x\to b} L_n(\frac{v_0}{u_0})^2 \left( p_0 u_0^2 ( u_0^2 \Delta q + (p_0 u_0')^2 \Delta p)
-Q_n(\frac{v_0}{u_0}) \right) < -\frac{1}{4}
\ee
and nonoscillatory if
\be
\liminf_{x\to b} L_n(\frac{v_0}{u_0})^2 \left( p_0 u_0^2 ( u_0^2 \Delta q + (p_0 u_0')^2 \Delta p)
-Q_n(\frac{v_0}{u_0}) \right) > -\frac{1}{4}.
\ee
\end{theorem}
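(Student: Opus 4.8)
The plan is to deduce Theorem~\ref{thm:gun} from the base case Theorem~\ref{thm:gu} by replacing the background operator $\tau_0$ with a suitably modified one whose principal solution carries the iterated logarithms. Throughout write $\xi = v_0/u_0 = \int_a^x (p_0 u_0^2)^{-1}\,dt$, so that $\xi' = (p_0 u_0^2)^{-1}$ and, since $u_0$ is minimal, $\xi \to \infty$ as $x\to b$; after increasing $a$ (cf.\ Remark (ii)) we may assume $\xi > \E_n$ throughout, so that $\log_n(\xi)$ and $L_{n-1}(\xi)$ are positive. The key computational input is the transformation identity
\be
(\tau_0 - E)\big(u_0\,\phi(\xi)\big) = -\frac{1}{r\,p_0 u_0^3}\,\ddot\phi(\xi), \qquad \ddot{}=\tfrac{d^2}{d\xi^2},
\ee
valid for any twice-differentiable $\phi$, which follows from $W(u_0,v_0)=1$ and $(\tau_0-E)u_0=0$ by a direct computation.

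First I would introduce the modified background $\ti\tau_0 = \frac{1}{r}(-\frac{d}{dx}p_0\frac{d}{dx} + \ti q_0)$ and show its principal solution is $\ti u_0 = u_0\sqrt{L_{n-1}(\xi)}$. Applying the identity above with $\phi = \sqrt{L_{n-1}(\xi)}$ and invoking the exact computation from the proof of Corollary~\ref{cor:khwh}, namely $\frac{d^2}{d\xi^2}\sqrt{L_{n-1}(\xi)} = Q_n(\xi)\sqrt{L_{n-1}(\xi)}$, shows that $(\ti\tau_0 - E)\ti u_0 = 0$ precisely when $\ti q_0 = q_0 + \frac{Q_n(\xi)}{p_0 u_0^4}$. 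Since $\int^b(p_0\ti u_0^2)^{-1}\,dt = \int^b L_{n-1}(\xi)^{-1}\,d\xi = \log_n(\xi)\big|^{\to b} = \infty$, the solution $\ti u_0$ is indeed minimal, and d'Alembert's formula (\ref{eq:dAl}) gives $\ti v_0 = \ti u_0\log_n(\xi)$ (the integration constant only contributing lower-order terms), so that $\ti v_0/\ti u_0 = \log_n(\xi)$ and $\ti v_0^2 = u_0^2 L_{n-1}(\xi)\log_n(\xi)^2$.

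Next I would apply Theorem~\ref{thm:gu} to the pair $(\ti\tau_0,\tau_1)$. Here $\ti p_0 = p_0$, hence $\Delta\ti p = \Delta p$, while $\Delta\ti q = q_1 - \ti q_0 = \Delta q - \frac{Q_n(\xi)}{p_0 u_0^4}$. Using $\ti v_0^2 = u_0^2 L_{n-1}\log_n^2$ and $L_{n-1}\log_n^2\cdot L_{n-1} = L_n^2$ (from $L_n = L_{n-1}\log_n$), the term $\ti p_0\ti v_0^2\,\ti u_0^2\,\Delta\ti q$ equals $L_n(\xi)^2\big(p_0 u_0^4\Delta q - Q_n(\xi)\big)$ exactly. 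Writing $\ti p_0\ti u_0' = p_0 u_0'\sqrt{L_{n-1}} + u_0^{-1}\frac{d}{dx}\sqrt{L_{n-1}(\xi)}$, the leading part of $\ti p_0\ti v_0^2(\ti p_0\ti u_0')^2\Delta\ti p$ is $L_n(\xi)^2\,p_0 u_0^2(p_0 u_0')^2\Delta p$. Adding the two contributions reproduces the claimed quantity $L_n(\xi)^2\big(p_0 u_0^2(u_0^2\Delta q + (p_0 u_0')^2\Delta p) - Q_n(\xi)\big)$, so the oscillation and nonoscillation criteria of Theorem~\ref{thm:gu} for $(\ti\tau_0,\tau_1)$ become exactly those asserted here. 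When $\Delta p = 0$ this algebra is exact and the proof is complete.

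The main obstacle is the case $\Delta p\ne 0$, where two points must be verified. First, hypothesis (\ref{cond:gu}) for the modified pair, i.e.\ $\lim_{x\to b}p_0\ti v_0\,p_0\ti u_0'\Delta p = \lim_{x\to b}p_0\Delta p = 0$; since $p_0\ti v_0\,p_0\ti u_0'\Delta p$ agrees to leading order with $\frac{L_n(\xi)}{\xi}\,p_0 v_0\,p_0 u_0'\Delta p$ and $\xi \le L_n(\xi)$ eventually, the stated hypotheses $p_0 v_0\,p_0 u_0'\Delta p = o(\xi^2/L_n(\xi)^2)$ and $p_0\Delta p = o(\xi^2/L_n(\xi)^2)$ give $o(\xi/L_n(\xi))\subseteq o(1)$, so both limits vanish. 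Second, the cross terms produced by the correction $u_0^{-1}\frac{d}{dx}\sqrt{L_{n-1}(\xi)}$ in $(\ti p_0\ti u_0')^2$, after multiplication by $\ti p_0\ti v_0^2\Delta p$, must be shown to be $o(1)$ so as not to affect the comparison of $\limsup$/$\liminf$ with $-\frac14$; inserting $\frac{d}{d\xi}\sqrt{L_{n-1}(\xi)} = \frac{\sqrt{L_{n-1}}}{2}\sum_{j=1}^{n-1}L_j^{-1}$ and regrouping, this is again exactly what the two $o(\cdot)$ hypotheses deliver. Checking that these error terms are genuinely controlled by the prescribed combinations $p_0 v_0\,p_0 u_0'\Delta p$ and $p_0\Delta p$ is the only delicate point; everything else is the exact algebra above.
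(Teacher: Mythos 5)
Your proposal is correct, but it takes a genuinely different route from the paper's. The paper's proof is a two--line reduction to its ODE machinery: setting $\beta=v_0/u_0$ and $Q=p_0u_0^2(u_0^2\Delta q+(p_0u_0')^2\Delta p)$, the effective Pr\"ufer equation (\ref{eq:prueang2}) becomes
$\varphi'=\rho\left(\sin^2\varphi+\sin\varphi\cos\varphi-\beta^2Q\cos^2\varphi\right)+o\big(\rho\beta^2/L_n(\beta)^2\big)$,
and Lemma~\ref{lem:odebnd} --- itself proved by reducing to $\beta=x$, invoking Corollary~\ref{cor:khwh} through the Kepler transformation, and sub/super-solution comparisons --- decides whether $\varphi$ stays bounded. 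You instead absorb the iterated logarithms into the background operator: the modified $\ti{\tau}_0$ with principal solution $u_0\sqrt{L_{n-1}(\xi)}$, $\xi=v_0/u_0$, reduces Theorem~\ref{thm:gun} directly to Theorem~\ref{thm:gu} and bypasses Lemma~\ref{lem:odebnd} and the sub/super-solution arguments entirely. Both arguments rest on the same kernel, namely the identity $\frac{d^2}{d\xi^2}\sqrt{L_{n-1}(\xi)}=Q_n(\xi)\sqrt{L_{n-1}(\xi)}$ and the $n=0$ case; your version is more self-contained and algebraically explicit for this particular theorem, while the paper's detour produces a reusable tool that it needs again for the averaged and in-gap results (Theorems~\ref{thm:guan} and~\ref{thm:mainn}), where a background-modification argument of your type is not available because $\tau_0-E$ need not have a positive solution. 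Two small points to tidy up. First, the correction term in $p_0\ti{u}_0'$ is $u_0^{-1}\frac{d}{d\xi}\sqrt{L_{n-1}(\xi)}$, not $u_0^{-1}\frac{d}{dx}\sqrt{L_{n-1}(\xi)}$ (one factor $(p_0u_0^2)^{-1}$ comes from $\xi'$); your last paragraph uses the correct derivative, the display in the middle does not. Second, after moving the left endpoint so that $\xi>\E_n$, d'Alembert's formula for $\ti{v}_0$ carries a base-point constant, so the criterion you obtain from Theorem~\ref{thm:gu} differs from the asserted one by a multiplicative factor $\bigl(1-c/\log_n\xi\bigr)^2=1+o(1)$; one should note explicitly that this cannot move a $\limsup$ or $\liminf$ across $-\frac{1}{4}$ even when the quantity is unbounded (it cannot, since wherever the quantity is large in absolute value it is negative and the factor is eventually larger than $\frac12$).
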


\noindent
The special case $\Delta p=0$ is again due to \cite{gu}. The special case $\tau_0=-\frac{d^2}{dx^2}$
gives again Corollary~\ref{cor:khwh}, however, under the (for $n>0$) somewhat stronger condition
$\lim_{x\to\infty} x^{-2} L_n(x)^2 \Delta p(x) =0$.

Moreover, there is even a version which takes averaged (rather than pointwise) deviations from
the borderline case:

\begin{theorem}\label{thm:gua}
Suppose $\tau_0-E$ has a positive solution on $(a,\infty)$ and let $u_0$ be a minimal positive solution.
Define $v_0$ by d'Alembert's formula (\ref{eq:dAl}) and suppose
$$
p_0 v_0^2 (u_0^2 \Delta q + (p_0 u_0')^2 \Delta p) =O(1), \qquad
\lim_{x\to\infty} p_0 v_0\, p_0 u_0' \Delta p = \lim_{x\to\infty} p_0 \Delta p = 0,
$$
and $\rho= (p_0 u_0 v_0)^{-1}$ satisfies $\rho=o(1)$ and
$\frac{1}{\ell} \int_0^\ell \left|\rho(x+t) -\rho(x) \right| dt = o(\rho(x))$.

Then $\tau_1-E$ is oscillatory if
\be
\limsup_{x\to\infty} \frac{1}{\ell} \int_x^{x+\ell} p_0(t) v_0^2(t)
\left( u_0(t)^2 \Delta q(t) + (p_0(t) u_0'(t))^2 \Delta p(t)\right) dt < -\frac{1}{4}
\ee
and nonoscillatory if
\be
\liminf_{x\to\infty} \frac{1}{\ell} \int_x^{x+\ell} p_0(t) v_0^2(t)
\left( u_0(t)^2 \Delta q(t) + (p_0(t) u_0'(t))^2 \Delta p(t)\right) dt > -\frac{1}{4}.
\ee
\end{theorem}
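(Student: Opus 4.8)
The plan is to deduce Theorem~\ref{thm:gua} from the same effective Prüfer angle analysis that underlies Theorem~\ref{thm:gu}, the only genuinely new ingredient being that pointwise control of the weighted perturbation is replaced by control of its local averages. I would first introduce the effective Prüfer angle $\psi$ for a real solution $u$ of $(\tau_1-E)u=0$, defined through the two modified Wronskians $W_x(u_0,u)$ and $W_x(v_0,u)$, which serve as the coordinates of $u$ in the frame $(u_0,v_0)$. Since $u_0,v_0$ solve $(\tau_0-E)\cdot=0$ while $u$ solves $(\tau_1-E)u=0$, these Wronskians are no longer constant, and their derivatives are governed precisely by $\Delta p$ and $\Delta q$. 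As in relative oscillation theory, $\tau_1-E$ is oscillatory if and only if $\psi$ is unbounded as $x\to\infty$ and nonoscillatory if $\psi$ stays bounded, since each zero of $u$ corresponds to $\psi$ passing a multiple of $\pi$.

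Next I would pass to the variable $s$ defined by $ds=\rho\,dx$ with $\rho=(p_0u_0v_0)^{-1}$; since $\rho=(\log\frac{v_0}{u_0})'$, we have $s=\log\frac{v_0}{u_0}$ up to a constant. In this variable the unperturbed rotation becomes uniform and the equation for $\psi$ takes a normalized form in which the free operator sits exactly at its oscillation threshold; this is the step that produces the critical constant $-\frac14$, in complete analogy with the Euler equation $-w''+\frac{c}{y^2}w=0$ being oscillatory precisely for $c<-\frac14$. The weighted perturbation $g:=p_0v_0^2(u_0^2\Delta q+(p_0u_0')^2\Delta p)$ appears as the coefficient that must be compared with $-\frac14$, while the hypotheses $\lim p_0v_0\,p_0u_0'\Delta p=\lim p_0\Delta p=0$ guarantee that the $\Delta p$-induced modification of the derivative term is negligible and that the effective Prüfer setup is legitimate, exactly as condition~(\ref{cond:gu}) does for Theorem~\ref{thm:gu}.

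The substance of the averaged version is then to show that, in the equation for $\psi$, the pointwise coefficient $g$ may be replaced by its local average over a window of length $\ell$. Here the hypotheses $\rho=o(1)$ and $\frac{1}{\ell}\int_0^\ell|\rho(x+t)-\rho(x)|\,dt=o(\rho(x))$ enter decisively: the first says the rotation is slow, so that over each $x$-window the increment $\Delta s\approx\rho(x)\ell$ is $o(1)$ and hence $\psi$ changes by $o(1)$ and the coefficient can be frozen; the second says $\rho$ is slowly varying, which lets me transport an average in $x$ into an average in $s$ with consistent window size and bound the resulting commutator errors by $o$ of the main term. Summing the window contributions, the total rotation of $\psi$ is controlled by $\frac{1}{\ell}\int_x^{x+\ell}g(t)\,dt$, and the averaged oscillation/nonoscillation dichotomy at threshold $-\frac14$ then follows just as in the pointwise case, with $g=O(1)$ ensuring the near-critical analysis applies.

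The hard part will be the critical regime near the borderline value $-\frac14$, where $\psi$ rotates arbitrarily slowly and naive averaging of the nonlinear Prüfer equation fails, since the average of its right-hand side over one rotation period does not reduce to the average of $g$. The resolution is precisely the window-freezing argument above: one compares $\psi$ on each window with the constant-coefficient Euler model determined by the window average, and the real technical work is to control the accumulated error across infinitely many windows and verify that it stays $o(1)$ relative to the genuine rotation. This is exactly where the slow-variation hypothesis on $\rho$, together with $g=O(1)$, is indispensable; the remaining computations are a routine adaptation of those behind Theorem~\ref{thm:gu}.
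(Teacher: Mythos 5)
Your overall strategy coincides with the paper's: a Wronskian--based Pr\"ufer angle, a normalization placing the threshold at $-\tfrac14$, and an averaging step justified by $\rho=o(1)$ (freezing the angle over each window of length $\ell$) together with the slow--variation condition (\ref{condrho}) on $\rho$. That averaging step is precisely Lemma~\ref{lem:aver}/Corollary~\ref{cor:aver}, and your window-by-window comparison with Euler models is a more laborious variant of the paper's single sub/super-solution comparison against the explicitly solvable autonomous equation (Lemma~\ref{lem:boundsol}, Corollary~\ref{cor:boundsol}).

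There is, however, a genuine gap in your normalization step. The change of independent variable $ds=\rho\,dx$ alone does \emph{not} bring the equation (\ref{eq:diff1wronski}) for $\psi$ into a normalized form, because $u_0$ and $v_0$ grow at different rates. Writing $g=p_0v_0^2\bigl(u_0^2\Delta q+(p_0u_0')^2\Delta p\bigr)=O(1)$ and expanding $-\Delta q\,(u_0\cos\psi-v_0\sin\psi)^2$ after division by $\rho=(p_0u_0v_0)^{-1}$, the coefficient of $\sin^2\psi$ in $d\psi/ds$ contains $-p_0u_0v_0^3\,\Delta q=-\tfrac{v_0}{u_0}\,p_0u_0^2v_0^2\Delta q$, which is of order $\beta=v_0/u_0\to\infty$, while the coefficient of $\cos^2\psi$ is $O(\beta^{-1})$. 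So there is no bounded free drift sitting at the oscillation threshold; moreover $\psi$ is \emph{not} nearly frozen over a window (its derivative in $s$ is of size $\beta$ away from multiples of $\pi$), so the freezing argument that underlies the averaging collapses. The missing ingredient --- which is the central device of the paper --- is the additional Kepler transformation (\ref{def:varphi}) of the \emph{angle}, $\cot\psi=\beta\cot\varphi+\beta$ with $\beta=v_0/u_0$, which rebalances the relative growth and yields (\ref{eq:prueang2}): $\varphi'=\rho\bigl(\sin^2\varphi+\sin\varphi\cos\varphi- g\cos^2\varphi\bigr)+o(\rho)$, where the hypotheses $\lim p_0v_0\,p_0u_0'\Delta p=\lim p_0\Delta p=0$ are used to push the $\Delta p$ cross terms into the $o(\rho)$ error exactly as in the proof of Theorem~\ref{thm:gu}. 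Only in this form do the drift $\sin^2\varphi+\sin\varphi\cos\varphi$ and the perturbation $-g\cos^2\varphi$ interact through the condition $4AB\gtrless1$ of Lemma~\ref{lem:boundsol} to produce the constant $-\tfrac14$, and only then is the right-hand side uniformly $O(\rho)$ so that Corollary~\ref{cor:aver} applies. Without this angle transformation neither the threshold nor the averaging can be obtained.
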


\noindent
Again we have

\begin{corollary}
Fix some $n\in\N_0$ and $(a,b)=(\E_n,\infty)$. Let
$$
p_0(x) = 1, \qquad
q_0(x) = Q_n(x)
$$
and suppose
$$
q_1(x) = Q_n(x) + O\Big(\frac{1}{L_n(x)^2}\Big), \qquad
p_1(x) = 1 + \begin{cases} o(1), & n=0,\\
O\Big(\frac{x^2}{L_n(x)^2}\Big), & n\ge 1.
\end{cases}
$$
Then $\tau_1$ is oscillatory if
\be
\inf_{\ell>0} \limsup_{x\to\infty} \frac{1}{\ell} \int_x^{x+\ell} L_n(t)^2 \left( \Delta q(t) +
\frac{\delta_n}{4 t^2} \Delta p(t) \right) dt < -\frac{1}{4}
\ee
and nonoscillatory if
\be
\sup_{\ell>0} \liminf_{x\to\infty} \frac{1}{\ell} \int_x^{x+\ell} L_n(t)^2 \left( \Delta q(t) +
\frac{\delta_n}{4 t^2} \Delta p(t) \right) dt > -\frac{1}{4},
\ee
where $\delta_n = 0$ for $n=0$ and $\delta_n =1$ for  $n \ge 1$.
\end{corollary}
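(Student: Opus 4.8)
The plan is to reduce the statement to the averaged criterion of Theorem~\ref{thm:gua}, following the same route by which Corollary~\ref{cor:khwh} was obtained from Theorem~\ref{thm:gu}. I would retain the background solutions used there,
\[
u_0(x) = \sqrt{L_{n-1}(x)}, \qquad v_0(x) = u_0(x)\log_n(x) = \sqrt{\log_n(x)\,L_n(x)}
\]
(with $L_{-1} = 1$), which satisfy $W(u_0,v_0) = 1$ and were already checked to solve $u_0'' = Q_n u_0$, i.e.\ $(\tau_0 - E)u_0 = 0$ with $E = 0$. The one new feature I want to exploit is that for these choices
\[
p_0 u_0 v_0 = u_0 v_0 = \sqrt{L_{n-1}\log_n L_n} = L_n,
\]
since $L_{n-1}\log_n = L_n$; hence the weight entering Theorem~\ref{thm:gua} is simply $\rho = (p_0 u_0 v_0)^{-1} = L_n^{-1}$.

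Next I would recall the integrand identity established in the proof of Corollary~\ref{cor:khwh},
\[
p_0 v_0^2\big(u_0^2\Delta q + (p_0 u_0')^2\Delta p\big) = L_n^2\Big(\Delta q + \tfrac14\big({\textstyle\sum_{j=0}^{n-1}}L_j^{-1}\big)^2\Delta p\Big),
\]
together with $\sum_{j=0}^{n-1}L_j^{-1} = 0$ for $n = 0$ and $\sum_{j=0}^{n-1}L_j^{-1} = x^{-1} + o(x^{-1})$ for $n \ge 1$. Consequently the left-hand side equals $L_n^2(\Delta q + \frac{\delta_n}{4x^2}\Delta p)$ up to an error $L_n^2\cdot o(x^{-2})\cdot\Delta p$, which by the hypothesis $\Delta p = O(x^2 L_n^{-2})$ (for $n \ge 1$) is $o(1)$. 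Adding an $o(1)$ term to the integrand does not change the averaged limits $\frac1\ell\int_x^{x+\ell}(\,\cdot\,)\,dt$, so the two conclusions of Theorem~\ref{thm:gua} become the asserted conditions on $\frac1\ell\int_x^{x+\ell}L_n^2(\Delta q + \frac{\delta_n}{4t^2}\Delta p)\,dt$. Since Theorem~\ref{thm:gua} applies for each fixed $\ell>0$, oscillation follows as soon as this averaged $\limsup$ lies below $-\frac14$ for some $\ell$, i.e.\ when $\inf_{\ell>0}\limsup < -\frac14$, and nonoscillation follows correspondingly when $\sup_{\ell>0}\liminf > -\frac14$.

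It then remains to verify the three hypotheses of Theorem~\ref{thm:gua}. Boundedness $p_0 v_0^2(u_0^2\Delta q + (p_0 u_0')^2\Delta p) = O(1)$ is immediate from the identity above together with $\Delta q = O(L_n^{-2})$ and $\Delta p = O(x^2 L_n^{-2})$. For $\lim p_0\Delta p = 0$ I would use $p_0 = 1$ and $\Delta p \to 0$ ($\Delta p = o(1)$ if $n = 0$; and $x^2 L_n^{-2} \le (\log x)^{-2}\to 0$ if $n \ge 1$). For $\lim p_0 v_0\,p_0 u_0'\Delta p = \lim v_0 u_0'\Delta p = 0$, the case $n = 0$ is trivial since $u_0 \equiv 1$, while for $n \ge 1$ one finds $v_0 u_0' = \frac12 L_n x^{-1}(1 + o(1))$, whence $v_0 u_0'\Delta p = O(x L_n^{-1}) \to 0$.

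The genuinely new ingredient, and the step I expect to require the most care, is the regularity condition on $\rho = L_n^{-1}$ demanded by Theorem~\ref{thm:gua}, namely $\rho = o(1)$ and $\frac1\ell\int_0^\ell|\rho(x+t) - \rho(x)|\,dt = o(\rho(x))$. The first holds since $L_n\to\infty$. For the second I would use the slow variation of $L_n$: from $L_n'/L_n\to 0$ (each reciprocal in the expansion of $L_n'/L_n$ tends to zero) one gets $\rho' = -\rho\,(L_n'/L_n)$, so $|\rho'| = o(\rho)$, and since $\rho$ is monotone this yields, uniformly on any fixed window,
\[
\frac1\ell\int_0^\ell|\rho(x+t) - \rho(x)|\,dt \le \ell\sup_{[x,x+\ell]}|\rho'| = o(\rho(x)).
\]
With all hypotheses of Theorem~\ref{thm:gua} in force, the corollary follows.
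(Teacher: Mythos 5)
Your proposal is correct and follows exactly the route the paper intends (the paper leaves this corollary's proof implicit, as the analogue of the derivation of Corollary~\ref{cor:khwh} from Theorem~\ref{thm:gu}, now applied to Theorem~\ref{thm:gua}): you take the same $u_0=\sqrt{L_{n-1}}$, $v_0=\sqrt{\log_n L_n}$, reuse the integrand identity, and correctly verify the additional hypotheses of the averaged theorem, in particular that $\rho=L_n^{-1}$ satisfies $\rho=o(1)$ and the averaging condition (\ref{condrho}) via $|\rho'|=o(\rho)$. No gaps.
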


\noindent
To the best of our knowledge this result is new even in the special case $n=0$,
in which we have that $\tau_1$ with $q_1= O(x^{-2})$ and $p_1 = 1 + o(1)$
is oscillatory if
\be
\inf_{\ell>0} \limsup_{x\to\infty} \frac{1}{\ell} \int_x^{x+\ell} t^2 q_1(t) dt < -\frac{1}{4}
\ee
and nonoscillatory if
\be
\sup_{\ell>0} \liminf_{x\to\infty} \frac{1}{\ell} \int_x^{x+\ell} t^2 q_1(t) dt > -\frac{1}{4}.
\ee
There is also a scale of criteria given in Theorem~\ref{thm:guan} which contains
Theorem~\ref{thm:gua} as the special case $n=0$. Note that the criterion is similar in
spirit to the Hille--Wintner criterion (see e.g., \cite{sw}) which states that $\tau_1$, with
$q_1$ integrable, is oscillatory if
\be
\limsup_{x\to\infty} x \int_x^\infty q_1(t) dt < -\frac{1}{4}
\ee
and nonoscillatory if
\be
\liminf_{x\to\infty} x \int_x^\infty q_1(t) dt > -\frac{1}{4}.
\ee
Result similar in spirit which are applicable at the bottom of the essential spectrum of
periodic operators were given by Khrabustovskii \cite{kh1}, \cite{kh2}.

Our next aim is to extend these result to the case where we are not necessarily at the infimum of the
spectrum of $H_0$. We will again assume that there is a {\em minimal} solution $u_0$ (i.e., one
solution with minimal growth) such that all other solutions are of the form
$v_0= \ti{v}_0 + \beta u_0$, where $\ti{v}_0$ grows like $u_0$ and $\beta$ is some positive or negative
function, which measures how much faster $v_0$ grows on average with respect to $u_0$.
For example, in the case of periodic operators we will have that $u_0$ (and hence $\ti{v}_0$)
is bounded and $\beta(x)= \pm x$ (the sign depending on whether we are at a lower or upper edge of
the spectral band). Moreover, since expressions like $\liminf p_0 u_0^2 v_0^2 \Delta q$
will just be zero if $u_0$ (and $v_0$) have zeros, we will average over some interval.
To avoid problems at finite end points we will choose $b=\infty$ from now on.

But first of all we will state our growth condition more precisely:

\begin{definition}\label{def:regedge}
A boundary point $E$ of the essential spectrum of $H_0$ will be called admissible if there
is a minimal solution $u_0$ of $(\tau_0-E) u_0= 0$ and a second linearly independent
solution $v_0$ with $W(u_0,v_0)=1$ such that
$$
\begin{pmatrix} u_0 \\ p_0 u_0' \end{pmatrix} = O(\alpha), \quad
\begin{pmatrix} v_0 \\ p_0 v_0' \end{pmatrix} - \beta\begin{pmatrix} u_0 \\ p_0 u_0' \end{pmatrix} =
o(\alpha \beta)
$$
for some weight functions $\alpha>0$, $\beta\lessgtr 0$, where $\beta$ is absolutely continuous such
that $\rho=\frac{\beta'}{\beta}>0$ satisfies $\rho(x)=o(1)$ and 
$\frac{1}{\ell} \int_0^\ell \left|\rho(x+t) -\rho(x) \right| dt = o(\rho(x))$.
\end{definition}

\noindent
Clearly, two solutions as in Definition~\ref{def:regedge} can always be found if one chooses
$\alpha$ to grow faster than any solution. However, such a choice will only produce nonoscillatory
perturbations! Hence, in order to get finite critical coupling constants below, the {\em right} choice
for $\alpha$ and $\beta$ will be crucial. Roughly speaking $\alpha$ needs to chosen such that
$\frac{1}{\ell} \int_x^{x+\ell} \frac{u_0(t)^2}{\alpha(t)^2} dt$
remains bounded from above and below by some positive constants as $x\to\infty$. Moreover,
it turns out that the sign of $\beta$ will depend on whether $E$ is a lower or upper boundary of the essential spectrum (i.e., if the essential spectral gap starts below or above $E$). This is related
to our requirement $W(u_0,v_0)=1$.

Note that a second linearly independent solution $v_0$ with $W(u_0,v_0)=1$ can be
obtained by Rofe-Beketov's formula
\begin{align*}
v_0(x) =&
u_0(x) \int^x \frac{(q_0(t)+p_0(t)^{-1} - E\, r(t))(u_0(t)^2 - (p_0(t) u_0'(t))^2)}{(u_0(t)^2 + (p_0(t) u_0'(t))^2)^2} dt\\
& - \frac{p_0(x) u_0'(x)}{u_0(x)^2 + (p_0(x) u_0'(x))^2}
\end{align*}
(the case $p_0\ne 1$ is due to \cite{kms}). In fact, this formula can be used to show that
these assumptions are satisfied for certain almost periodic potentials (see \cite[Sect.~6.4]{rbk}).

In this case we will need to look at the difference between the zeros of two solutions
$u_j$, $j=0,1$, of $(\tau_j-E) u_j =0$. We will call  $\tau_1-E$ is relatively nonoscillatory with
respect to $\tau_0-E$ if the difference between the number of zeros of $u_1$ and $u_0$
when restricted to $(a,c)$ remains bounded as $c\to \infty$, and relatively
oscillatory otherwise. Further details and the
connection with the spectra will be given in Section~\ref{sec:relosc}.

Now, we come to our main result.

\begin{theorem}\label{thm:main}
Suppose $E$ is an admissible boundary point of the essential spectrum of $\tau_0$,
with $u_0$, $v_0$ and $\alpha$, $\beta$ as in Definition~\ref{def:regedge}.
Furthermore, suppose that we have
\be
\Delta q, \Delta p = O\big(\frac{\beta'}{\alpha^2 \beta^2}\big).
\ee

Then $\tau_1-E$ is relatively oscillatory with respect to $\tau_0-E$ if
\be
\inf_{\ell>0} \limsup_{x\to\infty} \frac{1}{\ell} \int_x^{x+\ell} \frac{\beta(t)^2}{\beta'(t)}
\left( u_0(t)^2 \Delta q(t) + (p_0(t) u_0'(t))^2 \Delta p(t)\right) dt < -\frac{1}{4}
\ee
and relatively nonoscillatory with respect to $\tau_0-E$ if
\be
\sup_{\ell>0} \liminf_{x\to\infty} \frac{1}{\ell} \int_x^{x+\ell} \frac{\beta(t)^2}{\beta'(t)}
\left( u_0(t)^2 \Delta q(t) + (p_0(t) u_0'(t))^2 \Delta p(t)\right) dt > -\frac{1}{4}.
\ee
\end{theorem}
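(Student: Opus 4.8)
The plan is to run the effective Pr\"ufer machinery in the moving frame $(u_0,v_0)$ and then, via the rescaling dictated by $\beta$, to reduce the general admissible edge to the averaged situation already understood at the bottom of the spectrum (Theorem~\ref{thm:gua}). First I would introduce the relative Pr\"ufer angle $\Delta\theta$ adapted to $(u_0,v_0)$. Writing $u_1$ for a solution of $(\tau_1-E)u_1=0$ and using the two Wronskians $W_x(u_0,u_1)=u_0\,p_1u_1'-(p_0u_0')u_1$ and $W_x(v_0,u_1)=v_0\,p_1u_1'-(p_0v_0')u_1$, set $W_x(u_0,u_1)=-R\sin\Delta\theta$ and $W_x(v_0,u_1)=R\cos\Delta\theta$. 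By the connection between relative oscillation and the zeros of $W_x(u_0,u_1)$ from Section~\ref{sec:relosc}, the operator $\tau_1-E$ is relatively oscillatory (resp.\ nonoscillatory) with respect to $\tau_0-E$ precisely when $\Delta\theta$ is unbounded (resp.\ bounded) as $x\to\infty$. Differentiating and using the basic identity $W_x(u_0,u_1)'=\Delta q\,u_0u_1+\Delta p\,(p_0u_0')(p_1u_1')$, its analogue with $v_0$, and the inversion $u_1=v_0\,W_x(u_0,u_1)-u_0\,W_x(v_0,u_1)$ (which uses $W(u_0,v_0)=1$), one obtains the closed angular equation
\be
\Delta\theta' = B\cos^2\Delta\theta + 2A\sin\Delta\theta\cos\Delta\theta + C\sin^2\Delta\theta,
\ee
with $B=u_0^2\Delta q+(p_0u_0')^2\Delta p$, $A=u_0v_0\Delta q+(p_0u_0')(p_0v_0')\Delta p$, and $C=v_0^2\Delta q+(p_0v_0')^2\Delta p$.

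Next I would rescale the angle to compensate for the anisotropy $v_0\sim\beta u_0$. Introducing $\varphi$ through $\tan\varphi=\beta\tan\Delta\theta$ preserves the zeros of $W_x(u_0,u_1)$, hence the relative zero count. Feeding in the admissibility asymptotics of Definition~\ref{def:regedge}, namely $u_0,p_0u_0'=O(\alpha)$, $v_0-\beta u_0=o(\alpha\beta)$ and $p_0v_0'-\beta p_0u_0'=o(\alpha\beta)$, together with the hypothesis $\Delta q,\Delta p=O(\beta'/(\alpha^2\beta^2))$, gives $A=\beta B+o(\rho)$ and $C=\beta^2B+o(\rho\beta)$ with $\rho=\beta'/\beta$, and a direct computation collapses the $\varphi$-equation to
\be
\varphi' = \rho\Big(b+\big(b+\tfrac12\big)\sin2\varphi\Big)+o(\rho), \qquad b=\frac{\beta^2}{\beta'}\big(u_0^2\Delta q+(p_0u_0')^2\Delta p\big),
\ee
where $b=O(1)$ by the hypothesis on $\Delta q,\Delta p$ and the size of $u_0,p_0u_0'$.

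Finally I would pass to the slow variable $s$ defined by $ds=\rho\,dx$ (so $s=\log|\beta|$ up to a constant), turning the equation into $\tfrac{d\varphi}{ds}=b+(b+\tfrac12)\sin2\varphi+o(1)$. For constant $b$ the right-hand side is strictly negative for all $\varphi$ exactly when $b<-\tfrac14$ and changes sign exactly when $b>-\tfrac14$; hence $\varphi$ runs monotonically to $-\infty$ (infinitely many zeros of the Wronskian, relative oscillation) in the first case, and is trapped near an equilibrium (finitely many zeros, relative nonoscillation) in the second. The sharp constant $-\tfrac14$ thus arises as the Kneser/Euler threshold of this scalar equation. To obtain the averaged statement one replaces $b(s)$ by its window average: the slowly-varying condition $\frac1\ell\int_0^\ell|\rho(x+t)-\rho(x)|\,dt=o(\rho(x))$ lets one interchange $x$- and $s$-averaging and absorb the $o(\rho)$ terms over each window, after which a Sturm comparison of the scalar equation against the constant-coefficient models with $b$ replaced by $\inf_{\ell}\limsup_{x}\frac1\ell\int_x^{x+\ell}b\,dt$ and $\sup_{\ell}\liminf_{x}\frac1\ell\int_x^{x+\ell}b\,dt$ yields the two claims.

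The main obstacle is this last averaging step: one must replace the genuinely oscillatory coefficient $b(s)$ (recall that $u_0$ itself oscillates at a general edge) by its local average and control the accumulated fluctuation and $o(\rho)$ errors uniformly over the infinitely many windows as $s\to\infty$, all without degrading the sharp constant $-\tfrac14$. This is precisely where the bound $\Delta q,\Delta p=O(\beta'/(\alpha^2\beta^2))$ (which keeps $b$ bounded), the requirement that $\frac1\ell\int_x^{x+\ell}u_0^2/\alpha^2\,dt$ stay between positive constants (which makes the averaged quadratic Pr\"ufer terms meaningful), and the slowly-varying hypothesis on $\rho$ are all consumed; it also explains why it is advantageous to first establish the averaged positive-solution case (Theorem~\ref{thm:gua}) and then transport the argument to the admissible edge.
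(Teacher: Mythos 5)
Your proposal follows essentially the same route as the paper: the Rofe-Beketov Pr\"ufer angle for the Wronskian $W(u_0,u_1)$, a Kepler-type rescaling by $\beta$ (you take $\beta_1=\beta$, $\beta_2=0$ in the transformation (\ref{def:varphi}) rather than the paper's choice $\beta_1=\beta_2=\beta$, which merely rotates the quadratic form and yields the same threshold $-\tfrac14$), followed by window averaging under condition (\ref{condrho}) and the scalar boundedness lemma. The argument is correct, and the points you gloss over --- the branch/sign bookkeeping when $\beta<0$ and the sub/super-solution absorption of the $o(\rho)$ errors --- are handled by the lemmas of Sections~\ref{sec:epa} and~\ref{sec:ode} exactly in the way you indicate.
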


\noindent
We remark that the growth conditions from Definition~\ref{def:regedge} on the derivatives $p_0 u_0'$
and $p_0 v_0'$ are not needed if $\Delta p=0$. Similarly, the growth conditions on $u_0$ and $v_0$
are not needed if $\Delta q=0$. 

In the case where $\Delta q$ and $\Delta p$ have precise asymptotics we have:

\begin{corollary}
Suppose
\be
\Delta q = \mu \frac{\beta'}{\alpha^2 \beta^2} (1 + o(1)), \qquad
\Delta p = \nu \frac{\beta'}{\alpha^2 \beta^2} (1 + o(1)).
\ee

Then $\tau_1-E$ is relatively oscillatory with respect to $\tau_0-E$ if
\be\label{eq:reloscs}
\inf_{\ell>0} \limsup_{x\to\infty} \frac{1}{\ell} \int_x^{x+\ell}
\left( \mu \frac{u_0(t)^2}{\alpha(t)^2} + \nu \frac{(p_0(t) u_0'(t))^2}{\alpha(t)^2}\right) dt < -\frac{1}{4}
\ee
and relatively nonoscillatory with respect to $\tau_0-E$ if
\be\label{eq:relosci}
\sup_{\ell>0} \liminf_{x\to\infty} \frac{1}{\ell} \int_x^{x+\ell}
\left( \mu \frac{u_0(t)^2}{\alpha(t)^2} + \nu \frac{(p_0(t) u_0'(t))^2}{\alpha(t)^2}\right) dt > -\frac{1}{4}.
\ee
\end{corollary}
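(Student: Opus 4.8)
The plan is to derive this directly from Theorem~\ref{thm:main} by substituting the prescribed asymptotics of $\Delta q$ and $\Delta p$ into the averaged integrand and then checking that the $o(1)$ corrections wash out after averaging. First I would verify the hypothesis of Theorem~\ref{thm:main}: since the factor $(1+o(1))$ is bounded, the assumptions $\Delta q = \mu \frac{\beta'}{\alpha^2\beta^2}(1+o(1))$ and $\Delta p = \nu \frac{\beta'}{\alpha^2\beta^2}(1+o(1))$ immediately yield $\Delta q, \Delta p = O\big(\frac{\beta'}{\alpha^2\beta^2}\big)$, so the theorem is applicable with the same $u_0$, $v_0$, $\alpha$, $\beta$ inherited from the admissibility of $E$.

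The key algebraic step is the cancellation of the weight $\frac{\beta^2}{\beta'}$ against the factor $\frac{\beta'}{\beta^2}$ carried by the asymptotics. Writing $\Delta q = \mu \frac{\beta'}{\alpha^2\beta^2}(1+\eps_1)$ and $\Delta p = \nu \frac{\beta'}{\alpha^2\beta^2}(1+\eps_2)$ with $\eps_1,\eps_2 \to 0$ as $x\to\infty$, the integrand appearing in Theorem~\ref{thm:main} becomes
\[
\frac{\beta^2}{\beta'}\big(u_0^2\Delta q + (p_0u_0')^2\Delta p\big)
= \mu\frac{u_0^2}{\alpha^2} + \nu\frac{(p_0u_0')^2}{\alpha^2} + g,
\]
where $g = \mu\frac{u_0^2}{\alpha^2}\eps_1 + \nu\frac{(p_0u_0')^2}{\alpha^2}\eps_2$ collects the error.

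The heart of the argument is to show that the averaged error $\frac{1}{\ell}\int_x^{x+\ell}g\,dt$ tends to zero as $x\to\infty$, uniformly in $\ell$. Here I would invoke Definition~\ref{def:regedge}, which gives $u_0 = O(\alpha)$ and $p_0u_0' = O(\alpha)$, so that $\frac{u_0^2}{\alpha^2}$ and $\frac{(p_0u_0')^2}{\alpha^2}$ are bounded; combined with $\eps_1,\eps_2\to 0$ this forces $|g(t)|\to 0$ as $t\to\infty$. Setting $G(x)=\sup_{t\ge x}|g(t)|$, which decreases to $0$, gives $\big|\frac{1}{\ell}\int_x^{x+\ell} g\,dt\big| \le G(x)$ for \emph{every} $\ell>0$. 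Hence the averaged integrand of Theorem~\ref{thm:main} and the quantity $\frac{1}{\ell}\int_x^{x+\ell}\big(\mu\frac{u_0^2}{\alpha^2}+\nu\frac{(p_0u_0')^2}{\alpha^2}\big)\,dt$ differ by at most $G(x)$, so they share the same $\limsup_{x\to\infty}$ and $\liminf_{x\to\infty}$ for each fixed $\ell$, and therefore the same $\inf_{\ell}\limsup_{x}$ and $\sup_{\ell}\liminf_{x}$. Conditions \eqref{eq:reloscs} and \eqref{eq:relosci} are thus exactly the hypotheses of Theorem~\ref{thm:main}, and the conclusion follows. I do not anticipate any real obstacle: the only point needing care is that the error bound $G(x)$ must be uniform in $\ell$ so that it may be discarded \emph{before} taking $\inf_{\ell}$ or $\sup_{\ell}$, and this uniformity rests precisely on the boundedness of $u_0/\alpha$ and $p_0u_0'/\alpha$ built into the definition of an admissible boundary point.
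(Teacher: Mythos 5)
Your proposal is correct and is essentially the argument the paper intends: the corollary is stated as an immediate consequence of Theorem~\ref{thm:main}, obtained exactly by the substitution you perform, with the error term killed after averaging because $u_0/\alpha$ and $p_0u_0'/\alpha$ are bounded by Definition~\ref{def:regedge} and the $o(1)$ factors vanish at infinity. Your observation that the bound $G(x)=\sup_{t\ge x}|g(t)|$ is uniform in $\ell$, so that it can be discarded before taking $\inf_\ell$ or $\sup_\ell$, is the one point worth making explicit, and you have made it.
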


\noindent
Clearly the precise asymptotic requirement can be removed by a
simple Sturm-type comparison argument (see Lemma~\ref{lem:monowron} below).

In the special case where $p_0$, $q_0$, and $r$ are periodic
functions, one has $\alpha(x)=1$,
$\beta(x)=\pm x$ (with the plus sign if $E$ is a lower band edge and the minus
sign if $E$ is an upper band edge) and can take $\ell$ to be the period. 

Then
$$
C_q = \frac{1}{\ell} \int_x^{x+\ell} u_0(t)^2 dt, \qquad
C_p = \frac{1}{\ell} \int_x^{x+\ell} (p_0(t) u_0'(t))^2 dt,
$$
are constants and (\ref{eq:reloscs}) respectively
(\ref{eq:relosci}) just read
$$
\mu C_q  + \nu C_p  \lessgtr -\frac{1}{4}.
$$
In the special case $p_0=p_1=1$ we recover Rofe-Beketov's well-known result \cite{rb3}--\cite{rb5}
since one can show (see Section~\ref{sec:floq})
$$
C_q =  \frac{|D'(E)|}{\ell^2}
$$
for $r(x)=1$, where $D$ is the Floquet discriminant.
In the special case $\Delta p=0$ we recover the recent extension by Schmidt \cite{kms}.

If $p_0$, $q_0$ are almost periodic and there exists an almost periodic
solution at the band edge $E$, then $E$ is an admissible band edge
($\alpha(x)=1$, $\beta(x)=\pm x$) after Lemma~6.5
in \cite{rbk}. By taking $\ell\to\infty$ in our formulas we recover the
oscillation criteria by Rofe-Beketov (\cite[Thm.~6.12]{rbk}).
In \cite{rbk}, it is furthermore shown that if the spectrum of the operator
$H_0$ has a band-structure, obeying some growth condition, then there
exist almost periodic solutions at the band edge and a formula for
the critical coupling constant in terms of the band edges is provided.

Clearly, as before we can get a whole scale of criteria:

\begin{theorem}\label{thm:mainn}
Fix $n\in\N_0$.
Suppose $E$ is an admissible boundary point of the essential spectrum of $\tau_0$,
with $u_0$, $v_0$ and $\alpha$, $\beta$ as in Definition~\ref{def:regedge}.
Furthermore, suppose that we have $\lim_{x\to\infty}\beta(x)=\infty$ and
\be
\Delta q, \Delta p = O\big(\frac{\beta'}{\alpha^2\beta^2}\big).
\ee
Abbreviate
\be
Q = \frac{1}{\beta'} \left( u_0^2 \Delta q + (p_0 u_0')^2 \Delta p\right).
\ee
Then $\tau_1-E$ is relatively oscillatory with respect to $\tau_0-E$ at $b$ if
\be
\inf_{\ell>0} \limsup_{x\to\infty} \frac{L_n(\beta(x))^2}{\beta(x)^2} \left(\frac{1}{\ell} \int_{x}^{x+\ell}
\beta(t)^2 Q(t) dt - \beta(x)^2 Q_n(\beta(x)) \right) < -\frac{1}{4}
\ee
and relatively nonoscillatory with respect to $\tau_0-E$ at $b$ if
\be
\sup_{\ell>0} \liminf_{x\to\infty} \frac{L_n(\beta(x))^2}{\beta(x)^2} \left(\frac{1}{\ell} \int_{x}^{x+\ell}
\beta(t)^2 Q(t) dt - \beta(x)^2 Q_n(\beta(x)) \right) > -\frac{1}{4}.
\ee
\end{theorem}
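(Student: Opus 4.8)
The plan is to run the effective Pr\"ufer angle formalism of Section~\ref{sec:epa} exactly as in the base case Theorem~\ref{thm:main}, but to compare the resulting Pr\"ufer equation against the $n$-th iterated-logarithm model instead of against the free problem; in short, this statement is to Theorem~\ref{thm:guan} what Theorem~\ref{thm:main} is to Theorem~\ref{thm:gua}. Concretely, I would first introduce the effective Pr\"ufer angle $\psi$ built from the reference pair $u_0,v_0$, so that $\tau_1-E$ is relatively oscillatory with respect to $\tau_0-E$ precisely when $\psi$ is unbounded as $x\to\infty$, the relative zero count being governed by the total variation of $\psi$. The admissibility hypotheses of Definition~\ref{def:regedge} ensure that, up to the stated error terms $o(\alpha\beta)$ and $O(\beta'/(\alpha^2\beta^2))$, the Pr\"ufer equation for $\psi$ has leading part driven by $Q=\frac{1}{\beta'}(u_0^2\Delta q+(p_0 u_0')^2\Delta p)$ together with the growth rate $\rho=\beta'/\beta$. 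The new hypothesis $\lim_{x\to\infty}\beta(x)=\infty$ is exactly what is needed for the iterated logarithms $L_n(\beta)$ and $Q_n(\beta)$ to be defined and positive near infinity.

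Next I would pass to the independent variable $s=\log\beta$, so that $ds=\rho\,dx$; this is the natural scale in which $\psi$ evolves and in which the background has been normalized to free form. Under this substitution the averaged quantity $\frac{1}{\ell}\int_x^{x+\ell}\beta^2 Q\,dt$ turns into an average of the transformed effective potential, and here the slowly-varying condition $\frac{1}{\ell}\int_0^\ell|\rho(x+t)-\rho(x)|\,dt=o(\rho(x))$ is what allows pointwise deviations to be replaced by averaged ones. This is also where the $\inf_\ell$ and $\sup_\ell$ appear: the oscillatory direction follows by bounding $\psi$ from below using a comparison (super-)solution whose averaged potential exceeds $-\frac14$, and the nonoscillatory direction by bounding $\psi$ from above with a (sub-)solution, exactly as in the proof of Theorem~\ref{thm:gua}. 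The $O$-hypothesis on $\Delta q,\Delta p$, rather than exact asymptotics, suffices here by the Sturm-type comparison of Lemma~\ref{lem:monowron}.

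I would then carry out the comparison with the $n$-th model. The model computation underlying Corollary~\ref{cor:khwh}, with borderline solutions $u_0=\sqrt{L_{n-1}}$ and $v_0=u_0\log_n$, identifies the $n$-th critical configuration: it shows that the $n$-th borderline is captured by subtracting $\beta^2 Q_n(\beta)$ from the effective potential and rescaling the Pr\"ufer angle by the factor $L_n(\beta)^2/\beta^2$. Subtracting $\beta^2 Q_n(\beta)$ from $\frac{1}{\ell}\int\beta^2 Q$ and multiplying by $L_n(\beta)^2/\beta^2$ therefore measures the deviation of the transformed potential from the $n$-th borderline, and the dichotomy reduces to whether this averaged deviation lies below or above the universal critical constant $-\frac14$, which is precisely the reduced Euler/Bessel threshold at that level of the scale.

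The step I expect to be the main obstacle is the uniform control of the error terms through the rescaling by $L_n(\beta)^2/\beta^2$. Because $Q_n$ is a nested sum of reciprocal products of iterated logarithms, the logarithmically small spacing of the scale means that the admissibility errors and, more importantly, the averaging error coming from $\rho$ being only slowly varying rather than constant must all be shown to be $o(L_n(\beta)^{-2})$; otherwise they would contaminate the $-\frac14$ threshold. Verifying that the hypotheses of Definition~\ref{def:regedge} are strong enough to guarantee this at every level of the scale is the delicate part of the argument; once it is in place, the oscillatory and nonoscillatory estimates are routine adaptations of the $n=0$ case Theorem~\ref{thm:main}.
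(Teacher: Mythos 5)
Your proposal follows the paper's own route: one rewrites the Kepler-transformed Pr\"ufer equation (\ref{eq:prueang2}) as $\varphi' = \rho\left(\sin^2\varphi + \sin\varphi\cos\varphi - \beta^2 Q\cos^2\varphi\right) + o\big(\rho\beta^2/L_n(\beta)^2\big)$, averages over length $\ell$ via Corollary~\ref{cor:aver} using (\ref{condrho}), and applies Lemma~\ref{lem:odebnd}, whose proof is precisely your reduction to $\beta(x)=x$ followed by a sub/super-solution comparison against the iterated-logarithm model of Corollary~\ref{cor:khwh}; you also correctly flag the one delicate point, namely that the admissibility and averaging errors must be controlled at the level $o(\rho\beta^2/L_n(\beta)^2)$. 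The only cosmetic deviations are your choice of $s=\log\beta$ rather than $y=\beta(x)$ as the normalized variable and your appeal to Lemma~\ref{lem:monowron} where the paper instead just needs boundedness of $\beta^2 Q$ to run the averaging lemma.
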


As a consequence we get:

\begin{corollary}
Let $\tau_0$ be periodic on $(a,\infty)$ with $r(x)=1$ and let $n\in\N_0$. Define
$$
\mu_c= -\frac{\ell^2}{|D|'(E)},
$$
and suppose
\be
q_1 =  q_0 + \mu_c \Big(Q_n + \frac{\mu}{L_n^2} \Big)+
o\Big(\frac{1}{L_n^2}\Big), \qquad
p_1 = p_0 + o\Big(\frac{1}{L_n^2}\Big).
\ee
Then $\tau_1-E$ is relatively oscillatory with respect to $\tau_0-E$ if
\be
\mu < -\frac{1}{4}
\ee
and relatively nonoscillatory with respect to $\tau_0-E$ if
\be
\mu > -\frac{1}{4}.
\ee
\end{corollary}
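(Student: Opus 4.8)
The plan is to read off the corollary from Theorem~\ref{thm:mainn} by specializing the weights to the periodic case and exploiting the exact periodicity of the band-edge solution. Since $\tau_0$ is periodic and $E$ is a band edge, the discussion following Theorem~\ref{thm:main} shows $E$ is admissible with $\alpha(x)=1$, $\beta(x)=\pm x$ (plus at a lower edge, minus at an upper one), and that one may take $\ell$ equal to the period; the conditions on $\rho=\beta'/\beta=1/x$ in Definition~\ref{def:regedge} then hold automatically. I would treat the lower edge $\beta(x)=x$ in detail (so $\beta'=1$, $\beta^2=x^2$), the upper edge being handled identically, the sign change in $1/\beta'$ being compensated by the value $\mu_cC_q=-1$ found below. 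First I would verify the structural hypotheses: from the assumed asymptotics, $\Delta q=\mu_c Q_n+\mu_c\mu L_n^{-2}+o(L_n^{-2})$ and $\Delta p=o(L_n^{-2})$, and because $L_n\ge L_0=x$ both are $O(x^{-2})=O(\beta'/(\alpha^2\beta^2))$, as required.

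The decisive input from Floquet theory (Section~\ref{sec:floq}) is that the band-edge solution has multiplier $\pm1$, so $u_0^2$ and $(p_0u_0')^2$ are $\ell$-periodic and $C_q=\frac1\ell\int_x^{x+\ell}u_0(t)^2\,dt$ is a positive constant independent of $x$. Together with the definition $\mu_c=-\ell^2/|D|'(E)$ and the sign of $|D|'(E)$ at the edge ($|D|$ decreases across a lower edge and increases across an upper one), this yields the normalization $\mu_cC_q=+1$ at a lower edge (respectively $-1$ at an upper edge). This identity is precisely what lets the subtracted term $\beta^2Q_n(\beta)=x^2Q_n(x)$ in Theorem~\ref{thm:mainn} absorb the leading, slowly varying part of $\Delta q$.

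With $Q=u_0^2\Delta q+(p_0u_0')^2\Delta p$ and $\beta'=1$, the quantity controlling oscillation is
\[
K(x)=\frac{L_n(x)^2}{x^2}\left(\frac1\ell\int_x^{x+\ell}t^2Q(t)\,dt-x^2Q_n(x)\right),
\]
and the heart of the proof is to show $K(x)\to\mu$ for $\ell$ equal to the period. I would split $t^2Q(t)$ along the three pieces of $\Delta q$ and the $\Delta p$-term and average each against the periodic weight $u_0^2$. Writing $g(t)=t^2Q_n(t)$, the $Q_n$-piece equals $\frac{\mu_c}\ell\int_x^{x+\ell}g(t)u_0(t)^2\,dt$; since $g(x)$ is constant in $t$, $\frac1\ell\int u_0^2=C_q$, and $\mu_cC_q=1$, this is $x^2Q_n(x)+\frac{\mu_c}\ell\int_x^{x+\ell}(g(t)-g(x))u_0(t)^2\,dt$, so the $x^2Q_n(x)$ exactly cancels the subtracted term. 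The $\mu L_n^{-2}$-piece averages to $\mu x^2L_n(x)^{-2}$ up to a small error and, after multiplication by $L_n(x)^2/x^2$, produces exactly $\mu$. Finally the $o(L_n^{-2})$-remainder of $\Delta q$ and the whole $\Delta p$-term contribute $o(x^2L_n^{-2})$, hence $o(1)$ after the amplification.

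The main obstacle is the leftover cancellation error, i.e. showing
\[
\frac{L_n(x)^2}{x^2}\cdot\frac{\mu_c}\ell\int_x^{x+\ell}(g(t)-g(x))u_0(t)^2\,dt\longrightarrow 0.
\]
Here I would use that $g(t)=t^2Q_n(t)=-\tfrac14-\tfrac14\sum_{j=1}^{n-1}(L_j(t)/t)^{-2}$ is slowly varying: differentiating the iterated logarithms gives $g'(t)=O(x^{-1}(\log x)^{-3})$, so by the mean value theorem $|g(t)-g(x)|=O(\ell\,x^{-1}(\log x)^{-3})$ on $[x,x+\ell]$, and since $u_0^2$ is bounded the integral is $O(x^{-1}(\log x)^{-3})$; multiplying by $L_n(x)^2/x^2=(\prod_{k=1}^n\log_k x)^2$ still tends to zero. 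The only genuine labor is the careful bookkeeping of all iterated-logarithm factors in $g'$ (and in the analogous error for the $\mu L_n^{-2}$-piece), verifying that each surviving product of logarithms is beaten by the factor $x^{-1}$. Once $K(x)\to\mu$ is established for this single $\ell$, one has $\inf_{\ell>0}\limsup_xK\le\mu$ and $\sup_{\ell>0}\liminf_xK\ge\mu$, so $\mu<-\tfrac14$ forces relative oscillation and $\mu>-\tfrac14$ relative nonoscillation via Theorem~\ref{thm:mainn}.
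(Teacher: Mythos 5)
Your argument is correct and is essentially the paper's own: the paper derives this statement from Theorem~\ref{thm:mainn} via the unnumbered corollary at the end of Section~\ref{sec:epa}, whose proof consists precisely of your ``main obstacle'' estimate --- showing via the mean value theorem that the averaged weights $\beta(t)^2/L_j(\beta(t))^2$ may be replaced by their pointwise values up to $o(\beta(x)^2/L_n(\beta(x))^2)$ --- combined with the Floquet identity $C_q=\ell^{-2}|\dot{D}(E)|$ from Lemma~\ref{lem:dotD} and the choice of $\ell$ as the period. The only cosmetic difference is that you perform the cancellation directly on $g(t)=t^2Q_n(t)$ rather than term by term on the summands $\beta^2/L_j(\beta)^2$.
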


\noindent
Again the special case $n=1$ and $\Delta p=0$ is due to \cite{kms}.
The assumption $r(x)=1$ can be dropped, but then $\mu_c$ can no longer
be expressed in terms of the derivative of the Floquet discriminant (alternatively one could also
choose $\alpha(x)=r(x)^{-1/2}$). A non-oscillation result similar in spirit to the Hille-Wintner
result mentioned earlier was given by Khrabustovskii \cite{kh3}.

\section{Relative Oscillation Theory in a Nutshell}
\label{sec:relosc}

The purpose of this section is to provide some further details on relative oscillation
theory and to show how the question of relative (non)oscillation is related to finiteness
of the number of eigenvalues in essential spectral gaps. We refer to
\cite{kt} and \cite{kt2} for further results, proofs, and historical remarks.

Our main object will be the (modified) Wronskian
\be
W_x(u_0,u_1)= u_0(x)\, p_1(x)u_1'(x) - p_0(x)u_0'(x)\, u_1(x)
\ee
of two functions $u_0$, $u_1$ and its zeros. Here we think of $u_0$ and $u_1$ as two
solutions of two different Sturm--Liouville equations $\tau_j u_j = E u_j$ of the type (\ref{stlij}).

Under these assumptions $W_x(u_0,u_1)$ is absolutely continuous and satisfies
\be\label{dwr}
W'_x(u_0, u_1) = (q_1 - q_0) u_0 u_1 +
\left(\frac{1}{p_0} - \frac{1}{p_1}\right) p_0 u_0' p_1 u_1'.
\ee
Next we recall the definition of Pr\"ufer variables $\rho_u$, $\theta_u$
of an absolutely continuous function $u$:
\be\label{eq:defprue}
u(x)=\rho_u(x)\sin(\theta_u(x)), \qquad
p(x) u'(x)=\rho_u(x) \cos(\theta_u(x)).
\ee
If $(u(x), p(x) u'(x))$ is never $(0,0)$ and $u, pu'$ are absolutely continuous, then $\rho_u$ is
positive and $\theta_u$ is uniquely determined once a value of
$\theta_u(x_0)$ is chosen by requiring continuity of $\theta_u$.

Notice that
\be \label{wpruefer}
W_x(u,v)= -\rho_u(x)\rho_v(x)\sin(\Delta_{v,u}(x)), \qquad
\Delta_{v,u}(x) = \theta_v(x)-\theta_u(x).
\ee
Hence the Wronskian vanishes if and only if the two Pr\"ufer angles differ by
a multiple of $\pi$.
We take two solutions $u_j$, $j=0,1$, of $\tau_j u_j =\lam_j u_j$ and
associated Pr\"ufer variables $\rho_j$, $\theta_j$.
%Since we can
%replace $q \to q - \lam r$ it is no restriction to assume $\lam_0=\lam_1=0$.
%We remark, that in (\ref{eq:defprue}) one has to take $p_j$ as $p$ for $u_j$,
%$j=0,1$.
We will call the total difference
\be\label{eq:wprueferwron}
\#_{(c,d)}(u_0,u_1) = \ceil{\Delta_{1,0}(d) / \pi} - \floor{\Delta_{1,0}(c) / \pi} -1
\ee
the number of weighted sign flips in $(c,d)$, where we have written $\Delta_{1,0}(x)=
\Delta_{u_1,u_0}$ for brevity.

One can interpret $\#_{(c,d)}(u_0,u_1)$ as the weighted sign flips of the Wronskian
$W_x(u_0,u_1)$, where a sign flip is counted as $+1$ if $q_0-q_1$ and $p_0-p_1$ are positive
in a neighborhood of the sign flip, it is counted as $-1$ if $q_0-q_1$ and $p_0-p_1$ are negative
in a neighborhood of the sign flip. In the case where the differences vanish or are of opposite sign
are more subtle \cite{kt,kt2}.

After these preparations we are now ready for

\begin{definition}\label{def:wsf}
For $\tau_0$, $\tau_1$ possibly singular Sturm--Liouville operators as in (\ref{stlij})
on $(a,b)$, we define
\be
\underline{\#}(u_0,u_1) = \liminf_{d \uparrow b,\,c \downarrow a} \#_{(c,d)}(u_0,u_1)
\quad\mbox{and}\quad
\overline{\#}(u_0,u_1) = \limsup_{d \uparrow b,\,c \downarrow a} \#_{(c,d)}(u_0,u_1),
\ee
where $\tau_j u_j = \lam_j u_j$, $j=0,1$.

We say that $\#(u_0,u_1)$ exists, if
$\overline{\#}(u_0,u_1)=\underline{\#}(u_0,u_1)$, and write
\be
\#(u_0,u_1) = \overline{\#}(u_0,u_1)=\underline{\#}(u_0,u_1).
\ee
in this case.
\end{definition}

\noindent
One can show that $\#(u_0,u_1)$ exists
if $p_0-p_1$ and $q_0-\lam_0r- q_1+\lam_1r$ have the same definite sign
near the endpoints $a$ and $b$.

We recall that in classical oscillation theory
$\tau$ is called oscillatory if a solution of $\tau u = 0$ has infinitely many
zeros.

\begin{definition}\label{def:relosc}
We call $\tau_1$ relatively nonoscillatory with respect to
$\tau_0$, if the quantities $\underline{\#}(u_0, u_1)$ and
$\overline{\#}(u_0, u_1)$ are finite for all solutions
$\tau_j u_j = 0$, $j = 0,1$.
We call $\tau_1$ relatively oscillatory with respect to
$\tau_0$, if one of the quantities $\underline{\#}(u_0, u_1)$ or
$\overline{\#}(u_0, u_1)$ is infinite for some solutions
$\tau_j u_j = 0$, $j = 0,1$.
\end{definition}

\noindent
It turns out that this definition is in fact independent of the solutions
chosen. Moreover, since a Sturm-type comparison theorem holds
for relative oscillation theory, we have

\begin{lemma}\label{lem:monowron}
If $\tau_1$ is relatively oscillatory with respect to $\tau_0$ for $p_1 \leq p_0$, $q_1 \leq q_0$
then the same is true for any $\tau_2$ with $p_2 \leq p_1$, $q_2 \leq q_1$.
Similarly, if $\tau_1$ is relatively nonoscillatory with respect to $\tau_0$ for $p_1 \leq p_0$,
$q_1 \leq q_0$ then the same is true for any $\tau_2$ with $p_1 \leq p_2 \leq p_0$,
$q_1 \leq q_2 \leq q_0$.
\end{lemma}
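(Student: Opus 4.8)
The plan is to reduce the entire statement to a scalar Sturm-type comparison for the relative Pr\"ufer angle. For solutions of $\tau_j u_j = 0$ the Pr\"ufer angle from (\ref{eq:defprue}) satisfies the Riccati-type equation
\[
\theta_j' = \frac{\cos^2\theta_j}{p_j} - q_j \sin^2\theta_j,
\]
so, fixing the background angle $\theta_0$ once and for all, the relative angle $\Delta_{j,0} = \theta_j - \theta_0$ solves the scalar first-order equation $\Delta_{j,0}' = F_j(x,\Delta_{j,0})$ with
\[
F_j(x,\Delta) = \frac{\cos^2(\Delta + \theta_0(x))}{p_j(x)} - q_j(x)\sin^2(\Delta + \theta_0(x)) - \theta_0'(x).
\]
By (\ref{wpruefer}) the Wronskian $W_x(u_0,u_j)$ vanishes exactly when $\Delta_{j,0} \in \pi\Z$, and in view of (\ref{eq:wprueferwron}) together with the fact that $a$ is regular (so the left endpoint contributes a bounded amount), $\tau_j$ is relatively oscillatory resp.\ nonoscillatory with respect to $\tau_0$ according to whether $\Delta_{j,0}$ is unbounded resp.\ bounded as $x\to b$. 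First I would record that under any of the sign-definite hypotheses the count $\#(u_0,u_j)$ exists (the remark after Definition~\ref{def:wsf}, with $\lambda=0$): at a crossing $\Delta_{j,0}\in\pi\Z$ one has $\cos^2\theta_j = \cos^2\theta_0$ and $\sin^2\theta_j = \sin^2\theta_0$, whence $\Delta_{j,0}' = (p_j^{-1}-p_0^{-1})\cos^2\theta_0 + (q_0-q_j)\sin^2\theta_0 \ge 0$ whenever $p_j\le p_0$ and $q_j\le q_0$. Thus $\Delta_{j,0}$ can cross the lattice $\pi\Z$ only upward, is therefore automatically bounded below, and relative oscillation is equivalent to $\Delta_{j,0}\to+\infty$.

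The key observation is that $F_j$ is monotone in the coefficients: since the weights $\cos^2$ and $\sin^2$ are nonnegative,
\[
F_2(x,\Delta) - F_1(x,\Delta) = \Big(\frac{1}{p_2(x)} - \frac{1}{p_1(x)}\Big)\cos^2(\Delta+\theta_0(x)) + \big(q_1(x) - q_2(x)\big)\sin^2(\Delta+\theta_0(x)),
\]
so $F_2 \ge F_1$ pointwise whenever $p_2\le p_1$ and $q_2\le q_1$, while $F_2\le F_1$ whenever $p_2\ge p_1$ and $q_2\ge q_1$. Because relative oscillation is independent of the chosen solutions, I may align the initial data $\theta_1(a) = \theta_2(a)$, so that $\Delta_{1,0}(a) = \Delta_{2,0}(a)$, and then invoke the standard differential-inequality comparison theorem for scalar equations. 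This yields $\Delta_{2,0}\ge\Delta_{1,0}$ in the first case and $\Delta_{2,0}\le\Delta_{1,0}$ in the second, on all of $(a,b)$.

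For the oscillatory statement I take $p_2\le p_1\le p_0$ and $q_2\le q_1\le q_0$, so $F_2\ge F_1$ and hence $\Delta_{2,0}\ge\Delta_{1,0}$; if $\tau_1$ is relatively oscillatory then $\Delta_{1,0}\to+\infty$, forcing $\Delta_{2,0}\to+\infty$ and thus $\tau_2$ relatively oscillatory. For the nonoscillatory statement I take $p_1\le p_2\le p_0$ and $q_1\le q_2\le q_0$ and compare $\tau_2$ with $\tau_1$: here $F_1\ge F_2$, so $\Delta_{2,0}\le\Delta_{1,0}$, and relative nonoscillation of $\tau_1$ (that is, $\Delta_{1,0}$ bounded above) gives $\Delta_{2,0}$ bounded above; combined with the lower bound coming from $p_2\le p_0$, $q_2\le q_0$ established in the first paragraph, $\Delta_{2,0}$ is bounded, so $\tau_2$ is relatively nonoscillatory. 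The step I expect to require the most care is the passage between the analytic statement ``$\Delta_{j,0}$ (un)bounded'' and the combinatorial count (\ref{eq:wprueferwron}); this is precisely where the sign-definiteness of the coefficient differences is used, since it simultaneously guarantees existence of $\#$ and supplies the automatic lower bound on $\Delta_{j,0}$. The comparison theorem itself is routine, but one must apply it in its Carath\'eodory form, as $p_j^{-1}$ and $q_j$ are only locally integrable (so $F_1$ is locally Lipschitz in $\Delta$ with $x$-dependent Lipschitz constant in $L^1_{loc}$).
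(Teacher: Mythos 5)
Your proof is correct, but note that the paper itself does not prove this lemma at all: it appears in the ``nutshell'' Section~\ref{sec:relosc}, is introduced with ``since a Sturm-type comparison theorem holds for relative oscillation theory, we have\dots'', and the reader is referred to \cite{kt}, \cite{kt2} for proofs. There the statement is obtained from the general machinery of weighted zeros of Wronskians (a triangle-type inequality for the counts $\#_{(c,d)}$ combined with classical Sturm comparison), whereas you give a direct, self-contained argument: you write $\Delta_{j,0}=\theta_j-\theta_0$ as the solution of a scalar Carath\'eodory equation $\Delta'=F_j(x,\Delta)$, observe that $F_j$ is monotone in $(p_j^{-1},-q_j)$ because the weights $\cos^2$ and $\sin^2$ are nonnegative, and conclude by the standard differential-inequality comparison. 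This buys a proof that needs nothing beyond the reduction of relative (non)oscillation to (un)boundedness of $\Delta_{1,0}$ (legitimate here since $a$ is regular and the definition is independent of the chosen solutions, both of which the paper supplies), at the price of redoing by hand what the cited framework packages abstractly. Your identification of the two delicate points is accurate. I would only insist that you actually write out the justification of ``$\Delta_{j,0}$ crosses $\pi\Z$ only upward'': the inequality $F_j(x,k\pi)\ge 0$ alone does not force this for an absolutely continuous solution, since the derivative may vanish at the crossing; one needs the one-sided Gronwall argument (on an interval where $\Delta_{j,0}<k\pi$ with $\Delta_{j,0}(x_2)=k\pi$, estimate $(k\pi-\Delta_{j,0})'\le L(x)(k\pi-\Delta_{j,0})$ using the $L^1_{loc}$ Lipschitz bound $L=2(p_j^{-1}+|q_j|)$ and conclude $k\pi-\Delta_{j,0}\equiv 0$). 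The same estimate underlies the comparison step with the non-strict inequality $F_2\ge F_1$ and equal initial data, so it should be stated once and used twice. With that line added, the argument is complete.
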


The connection between this definition and the spectrum is given by:

\begin{theorem} \label{thm:rosc}
Let $H_j$ be self-adjoint operators associated with $\tau_j$, $j=0,1$. Then
\begin{enumerate}
\item
$\tau_0-\lam_0$ is relatively nonoscillatory with respect to $\tau_0-\lam_1$
if and only if $\dim\Ran P_{(\lam_0,\lam_1)}(H_0)<\infty$.
\item
Suppose $\dim\Ran P_{(\lam_0,\lam_1)}(H_0)<\infty$ and
$\tau_1-\lam$ is relatively nonoscillatory with respect to $\tau_0-\lam$ for one
$\lam \in [\lam_0,\lam_1]$.  Then it is relatively nonoscillatory for
all $\lam\in[\lam_0,\lam_1]$ if and only if $\dim\Ran P_{(\lam_0,\lam_1)}(H_1)<\infty$.
\end{enumerate}
\end{theorem}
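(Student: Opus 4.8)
The plan is to reduce both statements to a renormalized oscillation theorem for a single operator, combined with an additivity (cocycle) identity for the relative Pr\"ufer angle; throughout, only the \emph{finiteness} of the sign-flip counts matters, so I may ignore their order and sign up to bounded terms. For part (i) I write $u_0^{\lam}$ for a solution of $(\tau_0-\lam)u=0$ and note that, by (\ref{wpruefer}) and (\ref{eq:wprueferwron}), the count $\#(u_0^{\lam_0},u_0^{\lam_1})$ equals, up to an error bounded by one, the total increase of $\theta_{u_0^{\lam_1}}-\theta_{u_0^{\lam_0}}$ in units of $\pi$, i.e.\ the weighted number of zeros of the Wronskian $W_x(u_0^{\lam_0},u_0^{\lam_1})$. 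Since the self-comparison of $H_0$ at $\lam_0<\lam_1$ corresponds to $\Delta p=0$ together with the definite-sign datum $(\lam_1-\lam_0)r>0$, it falls under the existence criterion recorded after Definition~\ref{def:wsf}, so $\#$ exists and is independent of the chosen solutions. The essential input is then the renormalized oscillation theorem of \cite{kt,kt2}, which identifies this weighted Wronskian zero count with $\dim\Ran P_{(\lam_0,\lam_1)}(H_0)$; invoking it makes relative nonoscillation of $\tau_0-\lam_0$ with respect to $\tau_0-\lam_1$ (finiteness of $\underline{\#}$ and $\overline{\#}$) equivalent to $\dim\Ran P_{(\lam_0,\lam_1)}(H_0)<\infty$.

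For part (ii) the key is that the relative Pr\"ufer angle chains additively through the intermediate operator $H_0$. With $u_j^{\lam}$ a solution of $(\tau_j-\lam)u=0$, the identity
\begin{align*}
\theta_{u_1^{\lam_1}}-\theta_{u_1^{\lam_0}}
&=\big(\theta_{u_1^{\lam_1}}-\theta_{u_0^{\lam_1}}\big)
+\big(\theta_{u_0^{\lam_1}}-\theta_{u_0^{\lam_0}}\big)\\
&\quad+\big(\theta_{u_0^{\lam_0}}-\theta_{u_1^{\lam_0}}\big)
\end{align*}
yields, after passing to sign flips in (\ref{eq:wprueferwron}) and absorbing the $\lceil\cdot\rceil$, $\lfloor\cdot\rfloor$ discretization into bounded terms,
$$
\#(u_1^{\lam_0},u_1^{\lam_1})
=\#(u_0^{\lam_1},u_1^{\lam_1})+\#(u_0^{\lam_0},u_0^{\lam_1})-\#(u_0^{\lam_0},u_1^{\lam_0})+O(1).
$$
The middle term is finite by hypothesis and equals $\dim\Ran P_{(\lam_0,\lam_1)}(H_0)$ by part (i); the two outer terms are the relative oscillation of $\tau_1-\lam$ against $\tau_0-\lam$ at $\lam=\lam_1$ and $\lam=\lam_0$; and by part (i) applied to $H_1$ the left-hand side is finite iff $\dim\Ran P_{(\lam_0,\lam_1)}(H_1)<\infty$. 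Thus if $\tau_1-\lam$ is relatively nonoscillatory for all $\lam\in[\lam_0,\lam_1]$, then both outer terms are finite, whence $\dim\Ran P_{(\lam_0,\lam_1)}(H_1)<\infty$. For the converse I would use the single-energy hypothesis to break the ambiguity of subtracting possibly infinite counts: fixing the energy $\lam^{*}$ at which relative nonoscillation is assumed and running the same decomposition on the subinterval with endpoints $\lam^{*}$ and $\lam$ gives
$$
\#(u_0^{\lam},u_1^{\lam})=\#(u_1^{\lam^{*}},u_1^{\lam})-\#(u_0^{\lam^{*}},u_0^{\lam})+\#(u_0^{\lam^{*}},u_1^{\lam^{*}})+O(1).
$$
Here the two $H_j$-counts on the subinterval are dominated by the finite full-interval counts, and $\#(u_0^{\lam^{*}},u_1^{\lam^{*}})$ is finite by assumption, so the right-hand side is finite; hence $\tau_1-\lam$ is relatively nonoscillatory for every $\lam\in[\lam_0,\lam_1]$.

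The main obstacle will be the rigorous bookkeeping in the additivity step. Because the count in (\ref{eq:wprueferwron}) is built from $\lceil\cdot\rceil$ and $\lfloor\cdot\rfloor$, chaining three relative angles holds only up to bounded boundary contributions, and one must exclude cancellation among the three legs when summing the liminf/limsup quantities $\underline{\#}$ and $\overline{\#}$. This is precisely controlled by the definite-sign existence criterion after Definition~\ref{def:wsf} and by the Sturm-type comparison of Lemma~\ref{lem:monowron}, which force each leg to be eventually monotone so that its limits inferior and superior split additively; I would carry out these estimates following the weighted Wronskian analysis of \cite{kt,kt2}.
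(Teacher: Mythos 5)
The paper itself does not prove Theorem~\ref{thm:rosc}: it is quoted as known and the proof is deferred entirely to \cite{kt} and \cite{kt2}. Your sketch reconstructs the strategy actually used there, namely (a) the renormalized oscillation theorem identifying the weighted zero count $\#(u_0^{\lam_0},u_0^{\lam_1})$ of the Wronskian of two solutions of the \emph{same} operator at different energies with $\dim\Ran P_{(\lam_0,\lam_1)}(H_0)$, and (b) a ``triangle inequality'' $\#(u_0,u_2)=\#(u_0,u_1)+\#(u_1,u_2)+O(1)$ obtained from the exact telescoping of Pr\"ufer angle differences, with the $\lceil\cdot\rceil$, $\lfloor\cdot\rfloor$ discretization absorbed into bounded errors. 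So the skeleton is the right one, and your use of the single-energy hypothesis to anchor the chain in the converse direction of (ii) is exactly the point of that hypothesis. Be aware, though, that for part (i) you are not proving anything: the renormalized oscillation theorem you invoke \emph{is} statement (i), so at the level of this paper both you and the authors are simply citing the companion papers for the analytic core.

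There is one genuine flaw in your bookkeeping paragraph. You claim that the definite-sign criterion after Definition~\ref{def:wsf} and Lemma~\ref{lem:monowron} force \emph{each} of the three legs to be eventually monotone, so that the liminf/limsup split additively. That is false for the two mixed legs $\#(u_0^{\lam},u_1^{\lam})$: there $\Delta q$ and $\Delta p$ have no definite sign in general (this is the whole point of relative oscillation theory), so $\Delta_{1,0}$ need not be eventually monotone and the existence criterion does not apply. Only the same-operator leg, with datum $(\lam_1-\lam_0)r>0$, is monotone. What actually rescues the argument is weaker and should be stated as such: since $\#_{(c,d)}$ is integer valued and built from a continuous angle, finiteness of $\underline{\#}$ and $\overline{\#}$ (i.e.\ relative nonoscillation in the sense of Definition~\ref{def:relosc}) forces the corresponding angle difference to be eventually confined to a bounded strip, and boundedness of each leg --- not monotonicity --- is all that is needed to conclude boundedness of their sum and hence finiteness of the chained count. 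With that correction, and with the containment $(\lam^{*},\lam)\subseteq(\lam_0,\lam_1)$ used to dominate the subinterval projections, your argument for part (ii) goes through along the lines of \cite{kt2}.
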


\noindent
For a practical application of this theorem one needs criteria
when  $\tau_1-\lam$ is relatively nonoscillatory with respect to $\tau_0-\lam$
for $\lam$ inside an essential spectral gap.

\begin{lemma}\label{lem:nonoscingap2}
Let $H_0$ be bounded from below. Suppose $a$ is regular ($b$ singular) and
\begin{enumerate}
\item
$\lim_{x\to b} r(x)^{-1} (q_0(x) - q_1(x)) = 0$, $\frac{q_0}{r}$ is bounded near $b$, and
\item
$\lim_{x\to b} p_1(x) p_0(x)^{-1} = 1$.
\end{enumerate}
Then $\sig_{ess}(H_0)=\sig_{ess}(H_1)$ and
$\tau_1 - \lam$ is relatively nonoscillatory with respect
to $\tau_0 - \lam$ for every $\lam \in \R \backslash\sigma_{ess}(H_0)$.

The analogous result holds for $a$ singular and $b$ regular.
\end{lemma}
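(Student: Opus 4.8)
The plan is to treat the two assertions separately and to reduce everything to the behaviour near the singular endpoint $b$. Since $a$ is a regular endpoint, nothing dangerous happens there: the Pr\"ufer angles $\theta_0,\theta_1$ of two solutions $\tau_j u_j=\lambda u_j$ extend continuously up to $a$, so $\Delta_{1,0}(c)=\theta_1(c)-\theta_0(c)$ stays bounded as $c\downarrow a$ and, by (\ref{eq:wprueferwron}), only a bounded number of weighted sign flips can accumulate at $a$. Moreover, changing the coefficients on a compact subinterval or the boundary condition at $a$ alters $\#_{(c,d)}$ only by a bounded amount and leaves the essential spectrum untouched, so I would fix an interior point $c_0$ and argue entirely on $(c_0,b)$. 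For the spectral equality I would show that $H_1$ is a relatively compact perturbation of $H_0$: condition (i) makes multiplication by $r^{-1}(q_1-q_0)$ relatively form-compact (with $q_0/r$ bounded controlling the form domain), and condition (ii) controls the principal part through $p_1/p_0\to1$. Weyl's theorem on the invariance of the essential spectrum then yields $\sigma_{ess}(H_1)=\sigma_{ess}(H_0)$; this is also a consequence of the relative-oscillation characterisation of the essential spectrum in \cite{kt2}.

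The core is the relative nonoscillation for a fixed $\lambda\in\R\setminus\sigma_{ess}(H_0)$. By (\ref{eq:wprueferwron}) and the reduction above, this is equivalent to the relative Pr\"ufer angle $\Delta_{1,0}(x)=\theta_1(x)-\theta_0(x)$ remaining bounded as $x\to b$, where $\theta_j$ solves the Pr\"ufer equation $\theta_j'=p_j^{-1}\cos^2\theta_j+(\lambda r-q_j)\sin^2\theta_j$. Writing $\theta_1=\theta_0+\Delta$ and using that the two Pr\"ufer vector fields $g_0,g_1$ (of $\tau_0-\lambda$, $\tau_1-\lambda$) are $\pi$-periodic in the angle, one gets $\Delta'=-(\Delta p\,\cos^2\theta_0+\Delta q\,\sin^2\theta_0)+\big(g_0(\theta_1)-g_0(\theta_0)\big)$, so that at every sign flip of the Wronskian, i.e.\ whenever $\Delta\in\pi\Z$, only the first, ``perturbative'' term survives.

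Since $\lambda\notin\sigma_{ess}(H_0)=\sigma_{ess}(H_1)$ and $b$ is limit point, each equation $\tau_j u=\lambda u$ carries a dichotomy near $b$ into a subdominant ($L^2$) solution and a dominant one. Choosing $u_0,u_1$ to be the dominant solutions, the projective Pr\"ufer flow has the dominant direction as an attractor, and because (i),(ii) force $g_1-g_0\to0$ the two dominant directions asymptotically coincide; this pins $\Delta$ to a finite limit and makes $\overline{\#}(u_0,u_1)$ and $\underline{\#}(u_0,u_1)$ finite. As the count is independent of the chosen solutions (Definition~\ref{def:relosc}), $\tau_1-\lambda$ is relatively nonoscillatory with respect to $\tau_0-\lambda$. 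Note that for $\lambda$ strictly below the spectrum both equations are even classically nonoscillatory, so $\Delta$ is trivially bounded; the substantial case is $\lambda$ in a spectral gap, where both equations oscillate.

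The main obstacle is precisely that (i) and (ii) give only decay of $\Delta p,\Delta q$, not integrability, so the naive bound $\int^b(\Delta p\,\cos^2\theta_0+\Delta q\,\sin^2\theta_0)\,dx$ on the phase drift need not converge. The boundedness of $\Delta$ must instead be extracted from the robustness of the dichotomy, whose exponential contraction onto the dominant direction outpaces the slow decay of the perturbation (equivalently, from a quantitative Riccati comparison for the Weyl solutions of $\tau_0-\lambda$ and $\tau_1-\lambda$). This is exactly where $\lambda\notin\sigma_{ess}$ is indispensable: at a band edge $E\in\sigma_{ess}$ the dichotomy degenerates and the delicate criteria of Theorem~\ref{thm:main} take over. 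I would further remark that Lemma~\ref{lem:monowron} lets one replace the pointwise control of $\Delta q$ by a one-sided comparison with $\tau_0-\lambda_\pm$ for $\lambda_-<\lambda<\lambda_+$ chosen with $[\lambda_-,\lambda_+]\cap\sigma_{ess}(H_0)=\emptyset$ (so that $\tau_0-\lambda_\pm$ is relatively nonoscillatory with respect to $\tau_0-\lambda$ by Theorem~\ref{thm:rosc}), but it does not by itself dispose of the contribution of $\Delta p$ arising from $p_1\neq p_0$, which is why the dichotomy estimate is needed. The case $a$ singular, $b$ regular is identical after interchanging the roles of the endpoints.
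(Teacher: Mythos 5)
The paper never proves this lemma: Section~\ref{sec:relosc} is expressly a survey and defers all proofs to \cite{kt} and \cite{kt2}, so there is no internal argument to compare yours against; I can only judge the proposal on its own. Your reduction to the singular endpoint, the identity $\Delta'=-(\Delta p\cos^2\theta_0+\Delta q\sin^2\theta_0)+\bigl(g_0(\theta_1)-g_0(\theta_0)\bigr)$, and the remark that the second term vanishes whenever $\Delta\in\pi\Z$ are all correct, and you rightly observe that this alone proves nothing because $\Delta p$, $\Delta q$ are only $o(1)$ and not integrable.

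The genuine gap is the dichotomy step, which is exactly where the whole content of the lemma sits. You assert that for $\lambda\notin\sigma_{ess}(H_0)$ the equation $\tau_0 u=\lambda u$ admits an \emph{exponential} dichotomy near $b$ whose contraction ``outpaces'' the $o(1)$ perturbation, so that $\Delta$ converges. Neither half is established. In the stated generality ($p^{-1},q,r\in L^1_{loc}$, arbitrary weight $r$, $\lambda$ in an interior gap rather than below the spectrum), the fact that $\lambda\notin\sigma_{ess}(H_0)$ yields an exponential --- as opposed to a merely projective --- splitting into dominant and subdominant solutions is a nontrivial theorem requiring its own proof (some Combes--Thomas-type input or a Liouville transformation exploiting the boundedness of $q_0/r$), and the quantitative ``roughness'' claim that an $o(1)$ coefficient perturbation leaves $\Delta$ bounded is essentially a restatement of the lemma, so invoking it is circular. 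You also correctly note that the soft alternative --- sandwiching $\tau_1-\lambda$ between $\tau_0-\lambda_0$ and $\tau_0-\lambda_1$ with $[\lambda_0,\lambda_1]$ in the gap, via Lemma~\ref{lem:monowron} and Theorem~\ref{thm:rosc}~(i) --- disposes of $\Delta q$ but not of $\Delta p$, since $p_1/p_0\to1$ gives no one-sided ordering of the leading coefficients; but having dismissed that route you must actually supply the dichotomy estimate, and it is missing. A complete proof has to use ``$q_0/r$ bounded'' and ``$p_1/p_0\to1$'' quantitatively (in \cite{kt2} this is done by comparison with auxiliary operators carrying $p_1$ as leading coefficient and a shifted spectral parameter); your sketch does not. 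The Weyl-theorem argument for $\sigma_{ess}(H_0)=\sigma_{ess}(H_1)$ is plausible but likewise only sketched.
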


\section{Effective Pr\"ufer angles and relative oscillation criteria}
\label{sec:epa}

As in the previous section, we will consider two Sturm--Liouville
operators $\tau_j$, $j=0,1$, and corresponding self-adjoint operators $H_j$, $j=0,1$.
Now we want to answer the question, when a boundary point $E$ of the essential
spectrum of $H_0$ is an accumulation point of eigenvalues of $H_1$.
By Theorem~\ref{thm:rosc} we need to investigate if $\tau_1-E$ is relatively oscillatory
with respect to $\tau_0-E$ or not, that is, if the difference of Pr\"ufer angels
$\Delta_{1,0}=\theta_1-\theta_0$ is bounded or not.

Hence the first step is to derive an ordinary differential equation for $\Delta_{1,0}$. While
this can easily be done, the result turns out to be not very effective for our purpose.
However, since the number of weighted sign flips $\#_{(c,d)}(u_0,u_1)$ is all we
are eventually interested in, any {\em other} Pr\"ufer angle which gives the same
result will be as good:

\begin{definition}
We will call a continuous function $\psi$ a Pr\"ufer angle for the Wronskian $W(u_0,u_1)$, if 
$\#_{(c,d)}(u_0,u_1) = \ceil{\psi(d) / \pi} - \floor{\psi(c) / \pi} -1$ for any $c,d\in(a,b)$.
\end{definition}

Hence we will try to find a more effective Pr\"ufer angle $\psi$ than $\Delta_{1,0}$ for the Wronskian
of two solutions. The right choice was found by Rofe-Beketov \cite{rb2}--\cite{rb5} (see also the recent
monograph \cite{rbk}): 

Let $u_0, v_0$ be two linearly independent solutions of $(\tau_0-\lam) u = 0$ with $W(u_0,v_0)=1$
and let $u_1$ be a solution of $(\tau_1-\lam) u = 0$. Define $\psi$ via
\be\label{def:psi}
W(u_0,u_1)=  -R \sin(\psi), \qquad W(v_0,u_1)= -R \cos(\psi).
\ee
Since $W(u_0,u_1)$ and $W(v_0,u_1)$ cannot vanish simultaneously, $\psi$ is a well-defined
absolutely continuous function, once one value at some point $x_0$ is fixed.

\begin{lemma}
The function $\psi$ defined in (\ref{def:psi}) is a Pr\"ufer angle for the Wronskian $W(u_0,u_1)$.
\end{lemma}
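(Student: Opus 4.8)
The plan is to compare the new angle $\psi$ with the difference of ordinary Pr\"ufer angles $\Delta_{1,0}=\theta_1-\theta_0$, for which the counting identity holds \emph{by definition} in (\ref{eq:wprueferwron}), and then to show that $\psi$ and $\Delta_{1,0}$ feed the same value into the floor/ceiling formula. First I would verify that $\psi$ is genuinely well defined, i.e.\ that $R^2=W(u_0,u_1)^2+W(v_0,u_1)^2>0$ throughout. Pointwise, $W(u_0,u_1)$ and $W(v_0,u_1)$ are the determinants of the phase vector $(u_1,p_1u_1')$ against $(u_0,p_0u_0')$ and against $(v_0,p_0v_0')$; if both vanished at some $x$, the nonzero vector $(u_1,p_1u_1')(x)$ would be parallel to both $(u_0,p_0u_0')(x)$ and $(v_0,p_0v_0')(x)$, forcing these to be parallel and contradicting $W(u_0,v_0)=1$. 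The same linear-algebra computation yields the identity $(u_1,p_1u_1')=-W(v_0,u_1)\,(u_0,p_0u_0')+W(u_0,u_1)\,(v_0,p_0v_0')$, so that $\psi$ is exactly the angular coordinate of $(u_1,p_1u_1')$ in the frame $\{(u_0,p_0u_0'),(v_0,p_0v_0')\}$, which is positively oriented since its determinant is $W(u_0,v_0)=1>0$.

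Next, from (\ref{wpruefer}) one has $W(u_0,u_1)=-\rho_0\rho_1\sin(\Delta_{1,0})$, whereas (\ref{def:psi}) gives $W(u_0,u_1)=-R\sin(\psi)$. Hence $\psi\equiv 0\pmod\pi$ holds precisely when $\Delta_{1,0}\equiv 0\pmod\pi$, that is, exactly at the zeros of $W(u_0,u_1)$: the two angles cross integer multiples of $\pi$ at the \emph{same} points. It then remains to check that they cross in the \emph{same direction}. Differentiating (\ref{def:psi}) gives $\psi'=\bigl(W(u_0,u_1)'\,W(v_0,u_1)-W(u_0,u_1)\,W(v_0,u_1)'\bigr)/R^2$, which at a zero of $W(u_0,u_1)$ collapses to $\psi'=-\cos(\psi)\,W(u_0,u_1)'/R$ with $\cos(\psi)=\pm1$; differentiating $W(u_0,u_1)=-\rho_0\rho_1\sin(\Delta_{1,0})$ gives there $\Delta_{1,0}'=-\cos(\Delta_{1,0})\,W(u_0,u_1)'/(\rho_0\rho_1)$ with $\cos(\Delta_{1,0})=\pm1$. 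The representation from the first paragraph shows that at such a crossing $(u_1,p_1u_1')=R\cos(\psi)\,(u_0,p_0u_0')$, so $\sgn(\cos\psi)$ records whether $u_1$ is parallel or antiparallel to $u_0$ in phase space, i.e.\ $\sgn(\cos\psi)=\cos(\Delta_{1,0})$. Since $R,\rho_0\rho_1>0$, this forces $\sgn(\psi')=\sgn(\Delta_{1,0}')$.

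Finally I would assemble the combinatorics. The integer-valued functions $\floor{\psi/\pi}$ and $\floor{\Delta_{1,0}/\pi}$ jump only when $\psi$, respectively $\Delta_{1,0}$, passes an integer multiple of $\pi$, hence only at zeros of $W(u_0,u_1)$, hence simultaneously; by the previous paragraph they jump with the same sign, so their difference is constant, and the same holds for the ceilings because $\psi/\pi\in\Z\Leftrightarrow\Delta_{1,0}/\pi\in\Z$. Consequently $\ceil{\psi(d)/\pi}-\floor{\psi(c)/\pi}$ and $\ceil{\Delta_{1,0}(d)/\pi}-\floor{\Delta_{1,0}(c)/\pi}$ coincide, since the constant offset cancels in the subtraction; subtracting $1$ and invoking the definition (\ref{eq:wprueferwron}) of $\#_{(c,d)}(u_0,u_1)$ then gives exactly the asserted identity, so $\psi$ is a Pr\"ufer angle for $W(u_0,u_1)$.

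I expect the crossing-direction step to be the main obstacle. The sign matching there is what turns on the positive orientation of the frame ($\det=W(u_0,v_0)=1>0$); with the opposite orientation the two angles would wind against each other and the counts would fail to agree. Care is also needed at any zero of $W(u_0,u_1)$ where $W(u_0,u_1)'$ vanishes as well, which must be excluded or absorbed by the standard nondegeneracy conventions of the relative oscillation framework (cf.\ the derivative formula (\ref{dwr})) before the simple derivative comparison above applies.
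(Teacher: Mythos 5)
Your core step is the same as the paper's: both proofs reduce the claim to showing that $\psi$ and $\Delta_{1,0}$ agree modulo $2\pi$ at every zero of $W(u_0,u_1)$, and both obtain this from a sign comparison that ultimately rests on the positive orientation $W(u_0,v_0)=1$ (the paper compares signs in $R\cos(\psi)=\rho_{v_0}\rho_{u_1}\sin(\theta_{u_1}-\theta_{v_0})$ using $\theta_{v_0}-\theta_{u_0}\in(0,\pi)$; you use the equivalent frame representation $(u_1,p_1u_1')=R\cos(\psi)\,(u_0,p_0u_0')-R\sin(\psi)\,(v_0,p_0v_0')$, which is precisely the identity recorded in the Remark after Lemma~\ref{lem:lemusualprue}). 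Your explicit verification that $R>0$ is a worthwhile addition.

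The weak point is the crossing-direction paragraph. The derivative comparison $\sgn(\psi')=\sgn(\Delta_{1,0}')$ needs $W(u_0,u_1)'\neq0$ at the crossing and, more basically, needs the zeros of the Wronskian to be isolated; neither is guaranteed here (by (\ref{dwr}) the Wronskian is constant wherever $\Delta p=\Delta q=0$, so its zero set may contain whole intervals), and the lemma carries no nondegeneracy hypothesis for you to invoke, so this step cannot simply be ``excluded by convention.'' The fix is that the step is not needed at all. You already have (a) $\sin(\psi)$ and $\sin(\Delta_{1,0})$ carry the same sign everywhere, since both equal $-W(u_0,u_1)$ divided by a positive factor, and (b) $\psi-\Delta_{1,0}\in2\pi\Z$ at every zero of the Wronskian. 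On any maximal interval free of zeros, (a) confines $\psi$ and $\Delta_{1,0}$ to intervals $(n\pi,(n+1)\pi)$ and $(m\pi,(m+1)\pi)$ with $n\equiv m\pmod 2$, so $\psi-\Delta_{1,0}$ ranges in an open interval of length $2\pi$ containing exactly one even multiple of $\pi$, namely $(n-m)\pi$; by (b) and continuity this is its value at both endpoints of that interval. From this one reads off directly that $\floor{\psi/\pi}-\floor{\Delta_{1,0}/\pi}$ and $\ceil{\psi/\pi}-\ceil{\Delta_{1,0}/\pi}$ equal the same constant on all of $(a,b)$, which is exactly what your final paragraph needs --- no transversality, and degenerate or non-isolated zeros are handled automatically. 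This is, implicitly, how the paper's one-line reduction works.
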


\begin{proof}
Since $W(u_0,u_1)=  -R \sin(\psi) = -\rho_{u_0}\rho_{u_1}\sin(\Delta_{1,0})$ it suffices
to show that $\psi = \Delta_{1,0} \mod 2\pi$ at each zero of the Wronskian. Since we can assume
$\theta_{v_0}-\theta_{u_0}\in(0,\pi)$ (by $W(u_0,v_0)=1$), this follows by
comparing signs of $R\cos(\psi)= \rho_{v_0}\rho_{u_1}\sin(\theta_{u_1}-\theta_{v_0})$.
\end{proof}

\begin{lemma}\label{lem:lemusualprue}
Let $u_0, v_0$ be two linearly independent solutions of $(\tau_0-\lam) u = 0$ with $W(u_0,v_0)=1$
and let $u_1$ be a solution of $(\tau_1-\lam) u = 0$.

Then the Pr\"ufer angle $\psi$ for the Wronskian $W(u_0,u_1)$ defined in (\ref{def:psi})
obeys the differential equation
\be\label{eq:diff1wronski}
\psi' = -\Delta q \big(u_0 \cos (\psi) - v_0 \sin(\psi)\big)^2
- \Delta p \big(p_0 u_0' \cos(\psi) - p_0 v_0' \sin(\psi)\big)^2,
\ee
where
$$
\Delta p = \frac{1}{p_0} - \frac{1}{p_1}, \qquad
\Delta q = q_1 -q_0.
$$
\end{lemma}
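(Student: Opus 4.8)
The plan is to derive the differential equation by differentiating the defining relations
(\ref{def:psi}) and using the product rule for the Wronskian (\ref{dwr}). First I would record
the two Wronskian derivatives
$$
W_x(u_0,u_1)' = \Delta q\, u_0 u_1 + \Delta p\, (p_0 u_0')(p_1 u_1'), \qquad
W_x(v_0,u_1)' = \Delta q\, v_0 u_1 + \Delta p\, (p_0 v_0')(p_1 u_1'),
$$
which follow directly from (\ref{dwr}) with the roles of the two functions as indicated (note
that $u_0,v_0$ solve the \emph{same} equation $(\tau_0-\lam)u=0$, so each appears in the
Wronskian with $\tau_0$-data while $u_1$ carries the $\tau_1$-data). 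The reason these have such
a clean form is that the ``diagonal'' contributions cancel: the terms coming from $\tau_0$ acting
on $u_0$ (resp.\ $v_0$) would produce $E\,r\,u_0u_1$-type pieces that are identical on both sides
of the Wronskian derivative and drop out, leaving only the genuine differences $\Delta q$,
$\Delta p$.

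Next I would differentiate $-R\sin\psi = W(u_0,u_1)$ and $-R\cos\psi = W(v_0,u_1)$, obtaining
$$
-R'\sin\psi - R\psi'\cos\psi = W(u_0,u_1)', \qquad
-R'\cos\psi + R\psi'\sin\psi = W(v_0,u_1)'.
$$
To isolate $\psi'$ I would eliminate $R'$ by forming the combination
$\cos\psi\cdot(\text{first})-\sin\psi\cdot(\text{second})$, which kills the $R'$ terms and yields
$$
-R\psi' = \cos\psi\, W(u_0,u_1)' - \sin\psi\, W(v_0,u_1)'.
$$
Substituting the two derivative formulas above, the right-hand side becomes
$$
\Delta q\, u_1\big(u_0\cos\psi - v_0\sin\psi\big)
+ \Delta p\,(p_1 u_1')\big(p_0 u_0'\cos\psi - p_0 v_0'\sin\psi\big).
$$

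The key remaining step is to re-express $u_1$ and $p_1 u_1'$ through $\psi$ and $R$ so that the
factor $R$ cancels. Here I would use the Cramer-type inversion afforded by $W(u_0,v_0)=1$: from
the two defining relations one solves the linear system to get
$$
u_1 = -R\big(u_0\cos\psi - v_0\sin\psi\big), \qquad
p_1 u_1' = -R\big(p_0 u_0'\cos\psi - p_0 v_0'\sin\psi\big),
$$
since expanding $u_1 = u_1\,W(u_0,v_0) = u_1(u_0 p_0 v_0' - p_0 u_0' v_0)$ and regrouping in terms
of $W(u_0,u_1)$ and $W(v_0,u_1)$ produces exactly these expressions (and likewise for $p_1u_1'$).
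The hard part of the proof is precisely getting these two inversion formulas right with correct
signs; once they are in hand, substitution gives
$$
-R\psi' = -R\,\Delta q\big(u_0\cos\psi - v_0\sin\psi\big)^2
- R\,\Delta p\big(p_0 u_0'\cos\psi - p_0 v_0'\sin\psi\big)^2,
$$
and dividing by $-R$ (which is nonvanishing since $W(u_0,u_1)$ and $W(v_0,u_1)$ never vanish
simultaneously) yields (\ref{eq:diff1wronski}). The bookkeeping of signs in the inversion and the
verification that the $R'$ terms genuinely cancel are the only delicate points; everything else is
a routine computation.
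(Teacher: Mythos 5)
Your strategy coincides with the paper's: differentiate the defining relations (\ref{def:psi}), eliminate $R'$ to obtain $-R\psi' = \cos\psi\, W(u_0,u_1)' - \sin\psi\, W(v_0,u_1)'$, insert (\ref{dwr}), and then invert (\ref{def:psi}) to express $u_1$ and $p_1u_1'$ through $R$ and $\psi$. The Wronskian-derivative formulas and the elimination of $R'$ are fine. However, the step you yourself single out as the delicate one --- the inversion --- carries the wrong sign. Solving the linear system
$$
u_0\,(p_1u_1') - (p_0u_0')\,u_1 = -R\sin\psi, \qquad
v_0\,(p_1u_1') - (p_0v_0')\,u_1 = -R\cos\psi
$$
by Cramer's rule (the determinant is $W(u_0,v_0)=1$) gives
$$
u_1 = +R\big(u_0\cos\psi - v_0\sin\psi\big), \qquad
p_1 u_1' = +R\big(p_0u_0'\cos\psi - p_0v_0'\sin\psi\big),
$$
not $-R(\cdots)$; you can confirm the sign by substituting these back into $W(u_0,u_1)=u_0\,p_1u_1'-p_0u_0'\,u_1$ and checking that you recover $-R\sin\psi\cdot W(u_0,v_0)=-R\sin\psi$ (this is also the content of the matrix identity in the paper's remark following the lemma).

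With your signs, the substitution does produce the display $-R\psi' = -R\,\Delta q(\cdots)^2 - R\,\Delta p(\cdots)^2$ that you wrote, but dividing that by $-R$ yields $\psi' = +\Delta q(\cdots)^2 + \Delta p(\cdots)^2$, i.e.\ the \emph{negative} of (\ref{eq:diff1wronski}); your assertion that division by $-R$ ``yields (\ref{eq:diff1wronski})'' silently flips the sign a second time. The two errors cancel in the stated conclusion, so the final formula is right, but the derivation as written is internally inconsistent --- and the sign here is not cosmetic, since it decides whether $\psi$ increases or decreases and hence the direction of every oscillation criterion built on this lemma. With the corrected inversion everything goes through cleanly: $-R\psi' = R\,\Delta q(\cdots)^2 + R\,\Delta p(\cdots)^2$, and dividing by $-R$ gives (\ref{eq:diff1wronski}).
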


\begin{proof}
Observe $R \psi' = -W(u_0,u_1)' \cos(\psi) + W(v_0,u_1)' \sin(\psi)$
and use (\ref{dwr}), (\ref{def:psi}) to evaluate the right hand side.
\end{proof}

\begin{remark}
Special cases of the {\em phase equation} (\ref{eq:diff1wronski}) have been used
in the physics literature before (\cite{ba}, \cite{ca}). Moreover, $\psi$ was originally
not interpreted as Pr\"ufer angle for Wronskians, but defined via
\be
\begin{pmatrix} u_1 \\ p_1 u_1' \end{pmatrix} =
\begin{pmatrix} v_0 & u_0 \\ p_0 v_0'  & p_0 u_0' \end{pmatrix}
\begin{pmatrix} -R \sin(\psi) \\ R \cos(\psi)\end{pmatrix}.
\ee
Augmenting the definition
$$
\begin{pmatrix} u_0 & u_1 \\ p_0 u_0' & p_1 u_1' \end{pmatrix} =
\begin{pmatrix} v_0 & u_0 \\ p_0 v_0'  & p_0 u_0' \end{pmatrix}
\begin{pmatrix} 0 & -R \sin(\psi) \\ 1 & R \cos(\psi)\end{pmatrix},
$$
and taking determinants shows $W(u_0,u_1)= -R \sin(\psi)$. Similarly we obtain
$W(v_0,u_1)= -R \cos(\psi)$ and hence this definition is equivalent to (\ref{def:psi}).
\end{remark}

\noindent
In the case $p_0=p_1$ equation (\ref{eq:diff1wronski}) can be interpreted as the Pr\"ufer
equation of an associated Sturm--Liouville equation with  coefficients given rather implicitly
by means of a Liouville-type transformation of the independent variable. Hence a standard
oscillation criterion of Hille and Wintner \cite[Thm~2.12]{sw} can be used.
This is the original strategy by Rofe-Beketov (see \cite[Sect.~6.3]{rbk}).

In fact, using the transformation $\eta=\tan(\psi)$ it is straightforward to check that $\psi$ satisfies
(\ref{eq:diff1wronski}) if $\eta$ satisfies the Riccati equation
\be
\eta' = -\Delta q \big(u_0 - v_0 \eta\big)^2
- \Delta p \big(p_0 u_0' - p_0 v_0' \eta\big)^2.
\ee
Hence we obtain

\begin{lemma}
Suppose $\Delta p =0$ and $\Delta q>0$. Then $\tau_1$ is relatively (non)oscillatory with
respect to $\tau_0$ if and only if the Sturm--Liouville equation associated with
$$
p^{-1} = \Delta q\, v_0^2 \exp(2 \int \Delta q\, u_0 v_0)>0, \qquad
q =-\Delta q\, u_0^2\, \exp(-2 \int\Delta q\, u_0 v_0)<0
$$
is (non)oscillatory.
\end{lemma}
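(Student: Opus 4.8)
The plan is to read off relative (non)oscillation from the boundedness of the effective Pr\"ufer angle $\psi$, and then transport that information, via one explicit change of dependent variable, to the zero counting of the stated Sturm--Liouville equation. Since $\Delta p=0$, Lemma~\ref{lem:lemusualprue} gives $\psi' = -\Delta q\,(u_0\cos\psi - v_0\sin\psi)^2$, so with $\Delta q>0$ the angle $\psi$ is nonincreasing; in particular $\#(u_0,u_1)$ exists. Because $\psi$ is a Pr\"ufer angle for $W(u_0,u_1)$, the count $\#_{(c,d)}(u_0,u_1)$ equals $(\psi(d)-\psi(c))/\pi$ up to an error bounded by $1$, and $a$ being regular fixes $\psi(c)$ near $a$. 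Monotonicity then shows that $\tau_1$ is relatively oscillatory with respect to $\tau_0$ precisely when $\psi\to-\infty$ as $x\to b$, and relatively nonoscillatory precisely when $\psi$ stays bounded. This boundedness dichotomy is what I want to match against oscillation of the new equation.

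Next I would pass to $\eta=\tan(\psi)$, which by the computation recorded just before the statement solves $\eta' = -\Delta q\,(u_0 - v_0\eta)^2$, and then substitute $w = \eta\,\exp(-2\int \Delta q\,u_0 v_0)$. Writing $G=\exp(2\int \Delta q\,u_0 v_0)$, so that $G'=2\Delta q\,u_0 v_0\,G$, a direct differentiation cancels the linear term in the Riccati equation and yields
\[
w' = -\frac{\Delta q\,u_0^2}{G} - \Delta q\,v_0^2\,G\,w^2 = q - \frac{1}{p}\,w^2 ,
\]
with $\tfrac{1}{p}=\Delta q\,v_0^2\,G$ and $q=-\Delta q\,u_0^2/G$, which are exactly the coefficients in the statement. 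This is the Riccati equation satisfied by the logarithmic derivative $w=p\phi'/\phi$ of a solution $\phi$ of $-(p\phi')'+q\phi=0$, so the substitution identifies the phase equation for $\psi$ with the Riccati flow of the asserted Sturm--Liouville problem. Note that $\Delta q>0$ forces $\tfrac1p>0$ and $q<0$, and $\tfrac1p$ vanishes only at the isolated zeros of $v_0$.

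Finally I would translate oscillation back and forth. The poles of $w=p\phi'/\phi$ are exactly the zeros of $\phi$, so the new equation is oscillatory iff $\phi$ has infinitely many zeros, iff $w$ has infinitely many poles on $(a,b)$. On each compact subinterval $G$ is bounded above and below by positive constants, hence the poles of $w=\tan(\psi)/G$ coincide with the points where $\psi\in \tfrac{\pi}{2}+\pi\Z$, and their number on $(c,d)$ is $(\psi(c)-\psi(d))/\pi$ up to a bounded error. Since this differs from the relative-oscillation count (crossings of $\pi\Z$) by at most one and $\psi$ is monotone, both are finite for every $d<b$ and stay bounded as $d\uparrow b$ iff $\psi$ is bounded. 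Combining the three steps gives the equivalence: $\tau_1$ is relatively oscillatory (respectively nonoscillatory) with respect to $\tau_0$ iff $\psi$ is unbounded (bounded) iff $\phi$ has infinitely (finitely) many zeros iff the stated equation is oscillatory (nonoscillatory).

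The algebraic verification in the second step is routine once the substitution is guessed; the step requiring genuine care is the last one, where the zero/pole count of $w$ must be matched uniformly in the endpoint to the relative-oscillation count of $\psi$. Here the positivity and local boundedness of $G$, together with the monotonicity of $\psi$ supplied by $\Delta q>0$, make the correspondence exact. One must also check that the vanishing of $\tfrac1p$ at zeros of $v_0$ creates neither spurious zeros of $\phi$ nor a failure of monotonicity of the Sturm--Liouville Pr\"ufer angle $\Theta$, for which $\Theta' = \tfrac1p\cos^2\Theta - q\sin^2\Theta\ge 0$ persists. As an alternative to this direct argument, once the phase equation is recognized as the Pr\"ufer equation of this Sturm--Liouville problem one may, as indicated in the text, invoke the Hille--Wintner comparison directly.
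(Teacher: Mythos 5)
Your proof is correct and follows essentially the same route as the paper: the paper likewise passes from the phase equation to the Riccati equation via $\eta=\tan(\psi)$ and then eliminates the linear term with the substitution $\phi=\exp(-2\int\Delta q\,u_0v_0)\,\eta$, identifying the result as the Riccati equation for the logarithmic derivative of solutions of the stated Sturm--Liouville problem. The only difference is that you spell out the monotonicity of $\psi$ and the pole/zero counting needed to match relative oscillation with oscillation of the new equation, details the paper's one-line proof leaves implicit.
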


\begin{proof}
Making another transformation $\phi= \exp(-2 \int \Delta q\, u_0 v_0) \eta$ 
we can eliminate the linear term to obtain the Riccati equation
$$
\phi' = q - \frac{1}{p} \phi^2
$$
for the logarithmic derivative $\phi=\frac{p u'}{u}$ of solutions of the above Sturm--Liouville
equation.
\end{proof}

\noindent
Clearly, an analogous result holds for the case where $\Delta q=0$ and $\Delta p >0$.

Since most oscillation criteria are for the case $p=1$, a Liouville-type transformation
is required before they can be applied. Nevertheless, in order to handle the general case
$\Delta q\ne 0$ and $\Delta p \ne 0$ we will use a more direct approach.

Even though equation (\ref{eq:diff1wronski}) is rather compact, it is still not well suited for
a direct analysis, since in general $u_0$ and $v_0$ will have different growth behaviour (e.g.,
for $\tau_0=-\frac{d^2}{dx^2}$ we have $u_0(x)=1$ and $v_0(x)=x$ at the boundary of the spectrum).
In order to fix this problem Schmidt \cite{kms} proposed to use yet another Pr\"ufer angle $\varphi$
given by the Kepler transformation
\be
\cot(\psi) = \beta_1 \cot(\varphi) + \beta_2,
\ee
where $\beta_1\lessgtr 0$ and $\beta_2$ are arbitrary absolutely continuous functions. It is straightforward to
check that there is a unique choice for $\varphi$ such that it is again absolutely continuous and
satisfies $\floor{\frac{\psi}{\pi}} = \floor{\frac{\varphi}{\pi}}$:
\be\label{def:varphi}
\varphi = \begin{cases}
\sgn(\beta_1) n \pi, & \psi = n\pi,\\
\sgn(\beta_1) n\pi + \arccot(\beta_1^{-1}(\cot(\psi) - \beta_2)), & \psi\in(n\pi,(n+1)\pi),
\end{cases} \qquad n\in\Z,
\ee
where the branch of $\arccot$ is chosen to have values in $(0,\pi)$.
The differential equation for $\varphi$ reads as follows:

\begin{lemma}
Let $u_0, v_0$ be two linearly independent solutions of $(\tau_0-\lam) u = 0$ with $W(u_0,v_0)=1$
and let $u_1$ be a solution of $(\tau_1-\lam) u = 0$.
Moreover, let $\beta_1\lessgtr 0$ and $\beta_2$ be arbitrary absolutely continuous functions.

Then $\sgn(\beta_1) \varphi$, with $\varphi$ defined in (\ref{def:varphi}), is a
Pr\"ufer angle $\varphi$ for the Wronskian $W(u_0,u_1)$ and
obeys  the differential equation
\begin{align} \label{deqvp}
\nn\varphi' =& \frac{\beta_1'}{\beta_1} \sin(\varphi) \cos(\varphi) + \frac{\beta_2'}{\beta_1} \sin^2(\varphi) \\
& - \frac{\Delta q}{\beta_1}\big(\beta_1 u_0 \cos(\varphi) - (v_0 - \beta_2 u_0) \sin(\varphi) \big)^2\\ \nn
& - \frac{\Delta p}{\beta_1}\big(\beta_1 p_0 u_0' \cos(\varphi) - (p_0 v_0' - \beta_2 p_0 u_0')
\sin(\varphi) \big)^2.
\end{align}
\end{lemma}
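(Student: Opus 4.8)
The plan is to treat the two assertions separately: first that $\sgn(\beta_1)\varphi$ is a Pr\"ufer angle for $W(u_0,u_1)$, and then that $\varphi$ satisfies (\ref{deqvp}). For the Pr\"ufer-angle property I would exploit that $\psi$ is already a Pr\"ufer angle (previous lemma) and that the Kepler relation $\cot(\psi)=\beta_1\cot(\varphi)+\beta_2$ is, for each fixed $x$ and $\beta_1\neq0$, an affine reparametrization of the cotangent. In particular it maps $\cot(\varphi)=\pm\infty$ to $\cot(\psi)=\pm\infty$, so that $\varphi\in\pi\Z$ holds exactly when $\psi\in\pi\Z$, and these coincidences occur at the very same points $x$. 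Hence the zeros of $W(u_0,u_1)=-R\sin(\psi)$ are precisely the points where $\sgn(\beta_1)\varphi\in\pi\Z$. The piecewise definition (\ref{def:varphi}), together with the factor $\sgn(\beta_1)$, is arranged so that $\sgn(\beta_1)\varphi$ is continuous and crosses each level $n\pi$ at the same $x$ and in the same direction as $\psi$: for $\beta_1>0$ one has $\varphi$ increasing together with $\psi$, while for $\beta_1<0$ the map reverses orientation and the factor $\sgn(\beta_1)$ restores it. Since $\#_{(c,d)}(u_0,u_1)$, computed as $\ceil{\cdot/\pi}-\floor{\cdot/\pi}-1$, depends only on these oriented crossings of $\pi\Z$, it returns the same value for $\sgn(\beta_1)\varphi$ as for $\psi$, giving the claim.

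For the differential equation I would work on the open intervals where $\psi\notin\pi\Z$, a set of full measure, and differentiate the Kepler relation implicitly. From $-\csc^2(\psi)\,\psi'=\beta_1'\cot(\varphi)-\beta_1\csc^2(\varphi)\,\varphi'+\beta_2'$ one solves for $\varphi'$ to obtain
\be
\varphi' = \frac{\sin^2(\varphi)}{\beta_1\sin^2(\psi)}\,\psi' + \frac{\beta_1'}{\beta_1}\sin(\varphi)\cos(\varphi) + \frac{\beta_2'}{\beta_1}\sin^2(\varphi),
\ee
so that the last two terms already reproduce the first line of (\ref{deqvp}). It then remains to substitute $\psi'$ from (\ref{eq:diff1wronski}) into the first term and to simplify the factors $\frac{\sin^2(\varphi)}{\sin^2(\psi)}\big(u_0\cos(\psi)-v_0\sin(\psi)\big)^2$ and its analogue with $p_0u_0',p_0v_0'$.

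The only place where the specific form of the transformation enters is the algebraic identity
\be
\frac{\sin(\varphi)}{\sin(\psi)}\big(u_0\cos(\psi)-v_0\sin(\psi)\big) = \beta_1 u_0\cos(\varphi) - (v_0-\beta_2 u_0)\sin(\varphi),
\ee
which I would derive by rewriting the left-hand side as $\sin(\varphi)\big(u_0\cot(\psi)-v_0\big)$ and inserting $\cot(\psi)=\beta_1\cot(\varphi)+\beta_2$; the identical manipulation with $u_0,v_0$ replaced by $p_0u_0',p_0v_0'$ handles the $\Delta p$ term. Squaring these two relations turns the first term above into exactly the two quadratic forms appearing in (\ref{deqvp}). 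Finally, since $\varphi$ is continuous by construction and the resulting right-hand side is locally integrable, $\varphi$ is absolutely continuous and the equation, established off the discrete set $\{\psi\in\pi\Z\}$, holds almost everywhere.

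I expect the main obstacle to lie not in the computation — which is a direct implicit differentiation plus the one cotangent substitution — but in the bookkeeping of the first part: one must verify carefully that the $\sgn(\beta_1)$ factor genuinely restores the orientation of the crossings, so that the weighted sign-flip count is preserved and not reversed, especially in the case $\beta_1<0$ where the Kepler map is orientation-reversing and the naive floor identity $\floor{\varphi/\pi}=\floor{\psi/\pi}$ fails.
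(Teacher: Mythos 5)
Your proposal is correct and follows essentially the same route as the paper: the paper likewise divides (\ref{eq:diff1wronski}) by $\sin^2(\psi)$, writes $\psi'/\sin^2(\psi)=-(\cot(\psi))'=-(\beta_1\cot(\varphi)+\beta_2)'$, and solves for $\varphi'$, which is exactly your implicit differentiation plus the cotangent substitution $u_0\cot(\psi)-v_0=\beta_1 u_0\cot(\varphi)-(v_0-\beta_2 u_0)$. The Pr\"ufer-angle bookkeeping you flag is handled in the paper only by the remark preceding the lemma that (\ref{def:varphi}) is the unique continuous choice compatible with the floor condition, so your level of detail there matches (indeed slightly exceeds) the paper's.
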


\begin{proof}
Rewrite (\ref{eq:diff1wronski}) as
$$
\frac{\psi'}{\sin(\psi)^2} =
-\Delta q \big(u_0 \cot(\psi) -  v_0)\big)^2
-\Delta p \big(p_0 u_0' \cot(\psi) -  p_0 v_0')\big)^2.
$$
On the other hand one computes
$$
\frac{\psi'}{\sin(\psi)^2}  = -(\cot(\psi))' = -\left(\beta_1 \cot(\varphi) + \beta_2\right)'
=  \beta_1 \frac{\varphi'}{\sin(\varphi)^2} - \beta_1' \cot(\varphi) - \beta_2'
$$
and solving for $\varphi'$ gives (\ref{deqvp}).
\end{proof}

\noindent
We will mainly be interested in the special case $\beta_1=\beta_2\equiv\beta$, where
\begin{align}\label{eq:prueang2}
\varphi' =& \frac{\beta'}{\beta} \left(\sin^2(\varphi) + \sin(\varphi) \cos(\varphi) \right)\\ \nn
& - \beta\, \Delta q\big(u_0 \cos(\varphi) - \frac{1}{\beta} (v_0 - \beta\, u_0) \sin(\varphi) \big)^2\\ \nn
& - \beta\, \Delta p\big(p_0 u_0' \cos(\varphi) - \frac{1}{\beta} (p_0 v_0' - \beta\, p_0 u_0')
\sin(\varphi) \big)^2.
\end{align}
Note that if $\beta<0$ then not $\varphi$, but $-\varphi$ is a Pr\"ufer angle. However, this choice
will avoid case distinctions later on.

Now we turn to applications of this result. As a warm up we will treat the
case where $E$ is the infimum of the spectrum of $H_0$ and prove Theorem~\ref{thm:gu}.

\begin{proof}[Proof of Theorem~\ref{thm:gu}]
Since $\tau_0-E$ is nonoscillatory, $\tau_1-E$ is relatively (non)oscillatory
with respect to $\tau_0-E$ if and only if $\tau_1-E$ is (non)oscillatory.

Set $\beta= \frac{v_0}{u_0} = \int p_0^{-1} u_0^{-2} dt$ and $\rho=\frac{\beta'}{\beta}=
\frac{1}{p_0 u_0 v_0}$. Now observe that (\ref{eq:prueang2}) reads
\begin{align*}
\varphi' =& \rho \big( \sin^2(\varphi) + \sin(\varphi) \cos(\varphi)
- p_0 v_0^2 u_0^2 \Delta q \cos^2(\varphi)\\ & -
p_0 v_0^2 \Delta p(p_0 u_0' \cos(\varphi) - \frac{1}{v_0} \sin(\varphi))^2  \big)\\
=& \rho \left( \sin^2(\varphi) + \sin(\varphi) \cos(\varphi) 
- p_0 v_0^2 (u_0^2 \Delta q + (p_0 u_0')^2 \Delta p) \cos^2(\varphi) \right)
+o(\rho),
\end{align*}
where we have used (\ref{cond:gu}) in the second step. Now use Corollary~\ref{cor:boundsol}
which is applicable since $\rho>0$ and $\int^b \rho(x) dx = \int^b \frac{\beta'(x) dx}{\beta(x)} =
\lim_{x\to b} \log(\beta(x)) =\infty$.
\end{proof}

\noindent
Now note that Corollary~\ref{cor:khwh} in turn gives us an criterion when the differential equation
for our Pr\"ufer angle has bounded solutions:

\begin{lemma}\label{lem:odebnd}
Fix some $n\in\N_0$, let $Q$ be a locally integrable on $(a,b)$ and suppose $\beta\lessgtr 0$
is absolutely continuous with $\rho=\frac{\beta'}{\beta}>0$ locally bounded and
$\lim_{x\to b} |\beta(x)|=\infty$. Then all solutions of the differential equation
\be
\varphi' = \rho \left(\sin^2(\varphi) + \sin(\varphi) \cos(\varphi) -
\beta^2 Q \cos^2(\varphi) \right) + o\big(\frac{\rho \beta^2}{L_n(\beta)^2}\big)
\ee
tend to $\infty$ if 
$$
\limsup_{x\to b} L_n(\beta(x))^2 \left(Q(x) - Q_n(\beta(x))\right) < -\frac{1}{4}
$$
and are bounded above if 
$$
\liminf_{x\to b} L_n(\beta(x))^2 \left(Q(x) - Q_n(\beta(x))\right) > -\frac{1}{4}.
$$
In the last case all solutions are bounded under the additional assumption
$Q =Q_n(\beta) + O(L_n(\beta)^{-2})$.
\end{lemma}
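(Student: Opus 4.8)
The plan is to turn this scalar equation into an oscillation problem for an explicitly solvable Schr\"odinger operator and then read off the critical constant from Corollary~\ref{cor:khwh}. First I would pass to the new independent variable $s=\log\abs{\beta}$ (assume $\beta>0$; the case $\beta<0$ is symmetric, $-\varphi$ then being the relevant angle). Since $\rho=\beta'/\beta$ we have $ds=\rho\,dx$, and $\lim_{x\to b}\abs{\beta}=\infty$ forces $s$ to range over a half-line $(s_0,\infty)$. In this variable the overall factor $\rho$ disappears and, using $L_n(\E^s)=\E^s L_{n-1}(s)$ to rewrite the error, the equation becomes
\be
\frac{d\varphi}{ds}=\sin^2(\varphi)+\sin(\varphi)\cos(\varphi)-\hat Q\cos^2(\varphi)+o\Big(\frac{1}{L_{n-1}(s)^2}\Big),\qquad \hat Q(s)=\E^{2s}Q .
\ee
As $s$ is an increasing reparametrization, $\varphi$ is bounded (resp.\ tends to $\infty$) in $x$ iff it is in $s$, so it suffices to analyze this equation.

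Second, I would recognize the displayed equation as an instance of the effective Pr\"ufer equation \eqref{eq:prueang2} for an auxiliary background. Taking Kepler parameter $\E^s$ and the background $\tau_0=-\frac{d^2}{ds^2}+\frac14$, whose solutions at $E=0$ are $u_0=\E^{-s/2}$, $v_0=\E^{s/2}$ (so $v_0=\E^s u_0$, $W(u_0,v_0)=1$), the cross and $\sin^2$ parts of the $\Delta q$-term drop out and one is left with exactly the equation above, with $\Delta q=\hat Q$ and $\Delta p=0$. Thus $\varphi$ is a Pr\"ufer angle for the Wronskian of $u_0$ with a solution of $\tau_1 u_1=0$, where $\tau_1=-\frac{d^2}{ds^2}+(\frac14+\hat Q)$, and its boundedness is equivalent to $\tau_1$ being nonoscillatory at $E=0$ (here I use that $\tau_0-0$ is nonoscillatory, being strictly positive, so relative nonoscillation reduces to ordinary nonoscillation).

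Third, the hypothesis is matched to the criterion by the bookkeeping identity
\be
\E^{2s}Q_n(\E^s)=Q_{n-1}(s)-\tfrac14 ,
\ee
which follows from $L_j(\E^s)=\E^s L_{j-1}(s)$ by splitting off the $j=0$ term. Together with $L_n(\E^s)^2=\E^{2s}L_{n-1}(s)^2$ this gives
\be
L_n(\beta)^2\big(Q-Q_n(\beta)\big)=L_{n-1}(s)^2\Big(\tfrac14+\hat Q-Q_{n-1}(s)\Big),
\ee
so the left-hand side of the lemma's assumption is precisely the level-$(n-1)$ Kneser--Hartman--Hille--Weber quantity for the potential $\frac14+\hat Q$ of $\tau_1$ relative to the background $Q_{n-1}$. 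Applying Corollary~\ref{cor:khwh} at level $n-1$ (with $p_1=1$, so its hypothesis is automatic) then pins down the threshold: $\tau_1$ is oscillatory, hence $\varphi\to\infty$, when $\limsup_{s\to\infty}L_{n-1}(s)^2(\frac14+\hat Q-Q_{n-1}(s))<-\frac14$, and nonoscillatory, hence $\varphi$ bounded above, in the opposite strict case. The two-sided boundedness under $Q=Q_n(\beta)+O(L_n(\beta)^{-2})$ follows because then $\frac14+\hat Q-Q_{n-1}=O(L_{n-1}^{-2})$, i.e.\ $\tau_1$ differs from the borderline operator $-\frac{d^2}{ds^2}+Q_{n-1}$ by a bounded multiple of the critical weight. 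The case $n=0$ is exceptional (the level would drop below the free background) and is exactly the situation already handled in the proof of Theorem~\ref{thm:gu}.

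Finally, the additive remainder $o(L_{n-1}(s)^{-2})$ in the Pr\"ufer equation sits at the same scale as the critical deviation $L_{n-1}(s)^{-2}$ but with arbitrarily small constant; since all inequalities above are strict, it can be absorbed by comparison with the equations in which $\hat Q$ is replaced by $\hat Q\pm\eps L_{n-1}(s)^{-2}$, which is legitimate by the Sturm-type monotonicity of Lemma~\ref{lem:monowron}. I expect this last point to be the main obstacle: one must check that the change of variables interacts correctly with the iterated logarithms (the level shift $n\rightsquigarrow n-1$ and the exact emergence of the correction $Q_{n-1}$ and of the constant $-\frac14$), and that the error is genuinely negligible against the critical weight \emph{uniformly} in $\varphi$, not merely pointwise, so that it can be re-expressed as an admissible perturbation of $\hat Q$.
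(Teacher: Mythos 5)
Your overall architecture is close to the paper's: the paper also reduces to Corollary~\ref{cor:khwh} by recognizing the ODE as an effective Pr\"ufer equation for a Wronskian and then absorbing the remainder by sub/super-solutions. Your change of variables $s=\log|\beta|$ (instead of the paper's $y=\beta(x)$), the auxiliary background $-\frac{d^2}{ds^2}+\frac14$ with $u_0=\E^{-s/2}$, $v_0=\E^{s/2}$, and the level-shift identity $\E^{2s}Q_n(\E^s)=Q_{n-1}(s)-\frac14$ are all correct and give a clean alternative reduction (to level $n-1$ rather than level $n$ over the free background). The treatment of $n=0$ and of the two-sided boundedness statement is acceptable at the level of detail of the paper itself.

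However, the last step has a genuine gap, and it is exactly the point you flagged. The remainder in the hypothesis is an \emph{arbitrary additive} term of size $o\bigl(\rho\beta^2 L_n(\beta)^{-2}\bigr)$, i.e.\ after your substitution an arbitrary function bounded by $\eps L_{n-1}(s)^{-2}$; it does not come multiplied by $\cos^2(\varphi)$. Consequently it cannot be absorbed by replacing $\hat Q$ with $\hat Q\pm\eps L_{n-1}^{-2}$: the resulting change of the right-hand side is $\mp\eps L_{n-1}^{-2}\cos^2(\varphi)$, which vanishes at $\varphi\equiv\frac{\pi}{2}\bmod\pi$ and therefore does not dominate the error there, so neither the scalar sub/super-solution comparison nor Lemma~\ref{lem:monowron} (which compares coefficients of genuine Sturm--Liouville operators, not arbitrary additive terms in the Pr\"ufer ODE) applies. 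The paper closes this by writing the error as $\eps L_n^{-2}(\sin^2\varphi+\cos^2\varphi)$ and proving that for the two-parameter family in which \emph{both} the $\sin^2$ and the $\cos^2$ coefficients are perturbed at the critical scale (parameters $A$ and $B$), boundedness depends only on $A+B$; the $\sin^2$-perturbation is realized not as a change of potential but as a change of the Kepler parameter $\beta_1$ in (\ref{deqvp}), which leaves the underlying Wronskian --- and hence the oscillation count --- untouched, so Corollary~\ref{cor:khwh} still decides the outcome. Without this step (or an equivalent argument showing that an $\eps L_{n-1}^{-2}$ perturbation of the $\sin^2$ coefficient shifts the threshold only by $O(\eps)$), your proof does not go through; note in particular that in your normalization a $\Delta p$-perturbation of the auxiliary problem contaminates the $\sin^2$, cross, and $\cos^2$ terms simultaneously, so the $p_1\ne 1$ freedom in Corollary~\ref{cor:khwh} is not a direct substitute.
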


\begin{proof}
The case $n=0$ is Lemma~\ref{lem:boundsol} and hence we can assume $n\ge 1$.
By a change of coordinates $y=\beta(x)$ we can reduce the claim to the case $\beta(x)=x$
(and $b=\infty$).

Now we start by showing that
\begin{align*}
\varphi' =& \frac{1}{x} \left(\Big(1-\frac{A x^2}{L_n(x)} \Big) \sin^2(\varphi)
+ \sin(\varphi) \cos(\varphi) -
x^2 \Big(Q_n + \frac{B}{4 L_n(x)^2}\Big) \cos^2(\varphi) \right)\\ & + o\Big(\frac{x}{L_n(x)^2}\Big)
\end{align*}
has only bounded solutions if $A+B>-1$ and only unbounded solutions (tending
to $\infty$) if $A+B<-1$. Since the error term $o(x L_n(x)^{-2})$ can be bounded
by $\eps x L_n(x)^{-2}(\sin^2(\varphi)+\cos^2(\varphi))$ it suffices to show this for one equation
in this class by an easy sub/super-solution argument: If $A+B<-1$, then any
solution of one equation with slightly smaller $A$ and $B$ is a sub-solution and hence forces the
solution to go to $\infty$. Similarly, If $A+B>-1$, then any
solution of one equation with slightly smaller $A$ and $B$ is a sub-solution and any 
solution of one equation with slightly larger $A$ and $B$ is a super-solution, which together
bound the solutions.

To see the claim for one equation in this class note that unboundedness (boundedness) of solutions
is equivalent to $\tau_1 = -d^2/dx^2 + Q$ being relatively (non)oscillatory with respect to
$\tau_0 = -d^2/dx^2$. Hence it suffices to choose $\beta_1=x(1+ A x^2 L_n^{-2})$,
$\beta_2= x$ and $Q = Q_n +  (A+B) / (4L_n^2)$ in (\ref{deqvp}) and
invoke Corollary~\ref{cor:khwh}.

Finally, the claim from the lemma follows from this result together with another sub/super-solution
argument. 
\end{proof}

\noindent
The special cases $n=0,1$ are essentially due to Schmidt (\cite[Prop.~3 and~4]{kms}).

With this result, we can now prove Theorem~\ref{thm:gun}:

\begin{proof}[Proof of Theorem~\ref{thm:gun}]
Set $\beta= \frac{v_0}{u_0} = \int p_0^{-1} u_0^{-2} dt$ and
$Q= p_0 u_0^2 (u_0^2 \Delta q + (p_0 u_0')^2 \Delta p)$.
As in the proof of Theorem~\ref{thm:gu}, (\ref{eq:prueang2}) reads
$$
\varphi' =  \rho \left( \sin^2(\varphi) + \sin(\varphi) \cos(\varphi) 
- \beta^2 Q \cos^2(\varphi) \right) +o\big(\frac{\rho\beta^2}{L_n(\beta)^2}\big)
$$
and invoking Lemma~\ref{lem:odebnd} finishes the proof (note that $\psi$ and hence also $\varphi$
is always bounded from below, since $\tau_0$ is nonoscillatory).
\end{proof}

\noindent
One might expect that this theorem remains valid if the conditions are not satisfied pointwise
but in some average sense. This is indeed true and can be shown by taking averages in the
differential equation for the Pr\"ufer angle. Such an averaging procedure was first used by Schmidt
\cite{kms1} and further extended in \cite{kms}.

\begin{theorem}\label{thm:guan}
Suppose $\tau_0-E$ has a positive solution and  let $u_0$ be a minimal positive solution.
Define $v_0$ by d'Alembert's formula (\ref{eq:dAl}) and abbreviate 
\be
Q(x) =  p_0(x) u_0^2(x) \left( u_0(x)^2 \Delta q(x) + (p_0(x) u_0'(x))^2 \Delta p(x)\right), \qquad
\beta(x)= \frac{v_0(x)}{u_0(x)}.
\ee
Suppose
$$
\beta^2 Q =O(1), \qquad
p_0 v_0\, p_0 u_0' \Delta p = o\Big(\frac{\beta^2}{L_n(\beta)}\Big), \quad
p_0 \Delta p = o\Big(\frac{\beta^2}{L_n(\beta)}\Big),
$$
and $\rho= (p_0 u_0 v_0)^{-1}$ satisfies $\rho=o(1)$ and (\ref{condrho}).

Then $\tau_1-E$ is oscillatory if
\be
\inf_{\ell>0} \limsup_{x\to\infty} \frac{L_n(\beta(x))^2}{\beta(x)^2} \left(\frac{1}{\ell} \int_{x}^{x+\ell}
\beta(t)^2 Q(t) dt - \beta(x)^2 Q_n(\beta(x)) \right) < -\frac{1}{4}
\ee
and nonoscillatory if
\be
\sup_{\ell>0} \liminf_{x\to\infty} \frac{L_n(\beta(x))^2}{\beta(x)^2} \left(\frac{1}{\ell} \int_{x}^{x+\ell}
\beta(t)^2 Q(t) dt - \beta(x)^2 Q_n(\beta(x)) \right) > -\frac{1}{4}.
\ee
\end{theorem}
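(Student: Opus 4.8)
The plan is to follow the proof of Theorem~\ref{thm:gun} verbatim through the reduction to a Pr\"ufer equation, and then to replace the pointwise ODE bound of Lemma~\ref{lem:odebnd} by an averaged analogue. First I would set $\beta=v_0/u_0=\int p_0^{-1}u_0^{-2}\,dt$, so that $\rho=\beta'/\beta=(p_0u_0v_0)^{-1}$ and $\beta'=(p_0u_0^2)^{-1}$, and introduce $Q=p_0u_0^2(u_0^2\Delta q+(p_0u_0')^2\Delta p)$ exactly as there. Inserting these into (\ref{eq:prueang2}) and using $W(u_0,v_0)=1$ to simplify the $\Delta p$ bracket (which becomes $p_0u_0'\cos\varphi-v_0^{-1}\sin\varphi$), the equation takes the form
$$
\varphi'=\rho\big(\sin^2\varphi+\sin\varphi\cos\varphi-\beta^2Q\cos^2\varphi\big)+o\big(\frac{\rho\beta^2}{L_n(\beta)}\big).
$$
The error here collects the $\Delta p$ cross terms $2p_0v_0\,p_0u_0'\Delta p\,\sin\varphi\cos\varphi$ and $-p_0\Delta p\,\sin^2\varphi$, which by hypothesis are $o(\beta^2/L_n(\beta))$ inside the bracket. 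Note this is one power of $L_n(\beta)$ coarser than the error allowed in Lemma~\ref{lem:odebnd}; that it remains negligible is exactly where the calibration of the hypotheses enters, since (as in the proof of Lemma~\ref{lem:odebnd}) a $\sin^2\varphi$- or $\sin\varphi\cos\varphi$-type coefficient perturbation must be of the critical scale $\beta^2/L_n(\beta)$ to shift the threshold, so an $o(\beta^2/L_n(\beta))$ perturbation contributes nothing.

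The heart of the matter is an averaged version of Lemma~\ref{lem:odebnd}: for the displayed equation with $\beta^2Q=O(1)$ and $\rho$ as in the theorem, all solutions tend to $\infty$ if $\inf_{\ell}\limsup_x\frac{L_n(\beta)^2}{\beta^2}\big(\frac{1}{\ell}\int_x^{x+\ell}\beta^2Q\,dt-\beta^2Q_n(\beta)\big)<-\frac14$ and are bounded if the corresponding $\sup_\ell\liminf_x>-\frac14$. To prove this I would change variables to $y=\log|\beta|$, i.e.\ $dy=\rho\,dx$, turning the equation into $\frac{d\varphi}{dy}=\sin^2\varphi+\sin\varphi\cos\varphi-\beta^2Q\cos^2\varphi+(\text{small})$, in which $\varphi'=O(1)$. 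The decisive observation is that an $x$-window $[x,x+\ell]$ has $y$-length $\int_x^{x+\ell}\rho\,dt=\ell\rho(x)(1+o(1))=o(1)$ because $\rho=o(1)$; hence $\varphi$ varies by only $o(1)$ across each window and may be frozen at $\varphi(x)$ up to controlled error. Freezing the angular factors and using the regularity condition (\ref{condrho}) to replace $\int_x^{x+\ell}\rho\,\beta^2Q\,dt$ by $\rho(x)\int_x^{x+\ell}\beta^2Q\,dt$, the increment of $\varphi$ over the window is governed by the local average $\bar Q(x)=\frac{1}{\ell\beta(x)^2}\int_x^{x+\ell}\beta^2Q\,dt$. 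Since $\beta$, $L_n(\beta)$ and $Q_n(\beta)$ are themselves essentially constant over the window (as $\beta(x+\ell)/\beta(x)=1+o(1)$), $\varphi$ behaves like a solution of the pointwise equation of Lemma~\ref{lem:odebnd} with $Q$ replaced by $\bar Q$. Invoking that lemma for each fixed $\ell$ and then optimizing over $\ell$ yields the stated $\inf_\ell$/$\sup_\ell$ criteria.

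Finally I would assemble the pieces as for Theorem~\ref{thm:gun}: since $\tau_0-E$ is nonoscillatory, $\tau_1-E$ is relatively (non)oscillatory with respect to $\tau_0-E$ if and only if it is (non)oscillatory, and $\varphi$ is automatically bounded from below; the averaged lemma then translates unboundedness (resp.\ boundedness) of $\varphi$ into oscillation (resp.\ nonoscillation), the role of the additional assumption $Q=Q_n(\beta)+O(L_n(\beta)^{-2})$ from Lemma~\ref{lem:odebnd} now being played by $\beta^2Q=O(1)$.

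I expect the main obstacle to be the rigorous justification of the freezing step in the averaged lemma, namely quantifying the error from replacing $\varphi(t)$ and $\rho(t)$ by $\varphi(x)$ and $\rho(x)$ across the window and showing it does not accumulate so as to destroy the $-\frac14$ threshold. This is precisely where $\rho=o(1)$ (short window in the $y$ variable) and the averaged-regularity hypothesis (\ref{condrho}) on $\rho$ are used, together with a sub/super-solution comparison (entirely in the spirit of the proof of Lemma~\ref{lem:odebnd}) to absorb both the frozen $\sin^2\varphi$/$\sin\varphi\cos\varphi$ perturbations and the coarser $o(\rho\beta^2/L_n(\beta))$ error coming from the $\Delta p$ cross terms.
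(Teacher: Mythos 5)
Your proposal is correct and follows essentially the same route as the paper: reduce to the Pr\"ufer equation (\ref{eq:prueang2}) with $\beta=v_0/u_0$ exactly as in the proof of Theorem~\ref{thm:gu}, average over windows of length $\ell$, and feed the averaged equation into Lemma~\ref{lem:odebnd}. Your ``freezing'' argument (using $\rho=o(1)$ and (\ref{condrho}) to replace $\varphi(t)$, $\rho(t)$ by $\varphi(x)$, $\rho(x)$ across each window) is precisely the content of Lemma~\ref{lem:aver} and Corollary~\ref{cor:aver}, which the paper simply cites at this point, and your remark on why the coarser $o(\beta^2/L_n(\beta))$ calibration of the $\Delta p$ cross terms is harmless (they perturb only the $\sin^2\varphi$ and $\sin\varphi\cos\varphi$ coefficients, whose critical scale is one power of $L_n$ lower) makes explicit a detail the paper leaves implicit.
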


\begin{proof}
Derive the differential equation for $\varphi$ as in the proof of Theorem~\ref{thm:gu} and
then take averages using Corollary~\ref{cor:aver}. Observe that the error
term is preserved by monotonicity of $\frac{\beta^2}{L_n(\beta)^2}$ and (\ref{condrho}).
\end{proof}

\noindent
Now we turn to the case above the infimum of the essential spectrum.

\begin{proof}[Proof of Theorem~\ref{thm:mainn}]
Observe that (\ref{eq:prueang2}) reads
$$
\varphi' = \frac{\beta'}{\beta} \left( \sin^2(\varphi) + \sin(\varphi) \cos(\varphi)
- \beta^2 Q \cos^2(\varphi) \right) +o\Big(\frac{\rho \beta^2}{L_n(\beta)^2}\Big).
$$
Average over a length $\ell$ using Corollary~\ref{cor:aver} and observe that the error
term is preserved by monotonicity of $\frac{\beta^2}{L_n(\beta)^2}$ and (\ref{condrho}).
Now apply Lemma~\ref{lem:odebnd}.
\end{proof}

\begin{corollary}
Suppose
\be
\rho= o(\frac{\beta^2}{L_n(\beta)^2}), \quad\text{and}\quad
\frac{1}{\ell} \int_x^{x+\ell} \frac{u_0(t)^2}{\alpha(t)^2} dt = C_q + o\Big(\frac{\beta^2}{L_n(\beta)^2}\Big)
\ee
for some $\ell>0$. Furthermore, assume
\be
\Delta q =  \frac{\beta'}{\alpha^2 C_q} \Big(Q_n(\beta) + \frac{\mu}{L_n(\beta)^2} \Big)+
o\Big(\frac{\beta'}{\alpha^2 L_n(\beta)^2)}\Big), \qquad
\Delta p = o\Big(\frac{\beta'}{\alpha^2 L_n(\beta)^2}\Big).
\ee
Then $\tau_1-E$ is relatively oscillatory with respect to $\tau_0-E$ if
\be
\mu < -\frac{1}{4}
\ee
and relatively nonoscillatory with respect to $\tau_0-E$ if
\be
\mu > -\frac{1}{4}.
\ee
\end{corollary}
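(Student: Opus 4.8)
The plan is to obtain this corollary as a direct specialization of Theorem~\ref{thm:mainn}: one inserts the prescribed asymptotics for $\Delta q$ and $\Delta p$, computes the integrand $Q$, and shows that the averaged quantity in Theorem~\ref{thm:mainn} reduces to $\mu+o(1)$. First I would verify the hypotheses of Theorem~\ref{thm:mainn}. Since $L_n(\beta)\ge\beta$ and $Q_n(\beta)=O(\beta^{-2})$, both prescribed asymptotics give $\Delta q,\Delta p=O(\beta'/(\alpha^2\beta^2))$, as required. Next, using the defining bounds $u_0,p_0u_0'=O(\alpha)$ from Definition~\ref{def:regedge}, every error term $o(\beta'/(\alpha^2 L_n(\beta)^2))$ is multiplied by $u_0^2/\alpha^2=O(1)$ or $(p_0u_0')^2/\alpha^2=O(1)$ and divided by $\beta'$, so that
\[
Q=\frac{1}{\beta'}\big(u_0^2\Delta q+(p_0u_0')^2\Delta p\big)
=\frac{u_0^2}{\alpha^2 C_q}\Big(Q_n(\beta)+\frac{\mu}{L_n(\beta)^2}\Big)+o\Big(\frac{1}{L_n(\beta)^2}\Big),
\]
and hence $\beta^2 Q=C_q^{-1}(u_0^2/\alpha^2)F(\beta)+o(\beta^2/L_n(\beta)^2)$, where I abbreviate $F(\beta)=\beta^2\big(Q_n(\beta)+\mu/L_n(\beta)^2\big)$ (a bounded function, since $\beta^2 Q_n(\beta)\to-\tfrac14$ for $n\ge1$ and $F\equiv\mu$ for $n=0$).

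The core step is to average $\beta^2 Q$ over $[x,x+\ell]$ and pull the slowly varying factor $F(\beta)$ out of the integral. By $\rho=\beta'/\beta=o(1)$ the function $\beta$ is slowly varying, so $\beta(t)^2/L_n(\beta(t))^2$ stays comparable to $\beta(x)^2/L_n(\beta(x))^2$ on the interval and the average of the error is again $o(\beta(x)^2/L_n(\beta(x))^2)$. Writing $F(\beta(t))=F(\beta(x))+[F(\beta(t))-F(\beta(x))]$, the constant piece produces $F(\beta(x))\cdot\frac1\ell\int_x^{x+\ell}u_0^2/\alpha^2\,dt=F(\beta(x))\big(C_q+o(\beta^2/L_n(\beta)^2)\big)$ by the hypothesis on the average of $u_0^2/\alpha^2$, which (as $F$ is bounded) equals $F(\beta(x))C_q+o(\beta^2/L_n(\beta)^2)$.

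For the remaining piece I would use $|u_0^2/\alpha^2|=O(1)$ together with $|F(\beta(t))-F(\beta(x))|\le\sup|F'|\,|\beta(t)-\beta(x)|$. Here $F'(\beta)=O(\beta^{-1})$, while $|\beta(t)-\beta(x)|\le\ell\sup_{[x,x+\ell]}\beta'=\ell\sup_{[x,x+\ell]}\rho\beta=o(\beta(x)^3/L_n(\beta(x))^2)$ by the strengthened assumption $\rho=o(\beta^2/L_n(\beta)^2)$ and slow variation; the product is $o(\beta(x)^2/L_n(\beta(x))^2)$. Collecting everything,
\[
\frac{1}{\ell}\int_x^{x+\ell}\beta(t)^2 Q(t)\,dt=\beta(x)^2 Q_n(\beta(x))+\frac{\mu\,\beta(x)^2}{L_n(\beta(x))^2}+o\Big(\frac{\beta(x)^2}{L_n(\beta(x))^2}\Big),
\]
so that after subtracting $\beta(x)^2 Q_n(\beta(x))$ and multiplying by $L_n(\beta(x))^2/\beta(x)^2$ one obtains the value $\mu+o(1)$.

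Finally I would feed this into Theorem~\ref{thm:mainn}. For the particular $\ell$ supplied by the hypothesis, both the $\limsup$ and the $\liminf$ of the resulting expression equal $\mu$; therefore $\inf_{\ell'>0}\limsup\le\mu$ and $\sup_{\ell'>0}\liminf\ge\mu$, whence Theorem~\ref{thm:mainn} gives relative oscillation when $\mu<-\tfrac14$ and relative nonoscillation when $\mu>-\tfrac14$. The only delicate point is the factoring step, namely controlling $F(\beta(t))-F(\beta(x))$ to the sharp order $o(\beta^2/L_n(\beta)^2)$; this is precisely where the hypothesis $\rho=o(\beta^2/L_n(\beta)^2)$, rather than merely $\rho=o(1)$, is needed.
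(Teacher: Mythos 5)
Your proposal is correct and follows essentially the same route as the paper: it reduces the corollary to Theorem~\ref{thm:mainn} and, via the mean value theorem together with the strengthened hypothesis $\rho=o(\beta^2/L_n(\beta)^2)$, shows that replacing $\beta(t)^2/L_j(\beta(t))^2$ by $\beta(x)^2/L_j(\beta(x))^2$ inside the average (your factor $F(\beta)$ is just the relevant linear combination of these) only costs $o(\beta^2/L_n(\beta)^2)$, after which the averaged quantity collapses to $\mu+o(1)$. The paper's proof is a terser version of exactly this estimate.
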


\begin{proof}
It is sufficient to show that
$$
\frac{1}{\ell} \int_x^{x+\ell} \left( \frac{\beta(t)^2}{L_j(\beta(t))^2}
- \frac{\beta(x)^2}{L_j(\beta(x))^2} \right) \frac{u_0(t)^2}{\alpha(t)^2} dt =
o\Big(\frac{\beta^2(x)}{L_n(\beta(x))^2}\Big)
$$
for $j = 0, \dots, n$. Since $u_0 \alpha^{-1}$ is bounded, this follows since
by the mean value theorem and monotonicity of $\beta$ we have
$$
\sup_{t\in[x,x+\ell]} \left|\frac{\beta(t)^2}{L_j(\beta(t))^2} - \frac{\beta(x)^2}{L_j(\beta(x))^2} \right|
\le 2\frac{\beta(x)^2}{L_j(\beta(x))^2} \sum_{k=1}^{j} \frac{\beta(x)}{L_k(\beta(x))}
\sup_{t\in[x,x+\ell]}\rho(t),
$$
finishing the proof (note that $\beta/L_0(\beta)=1$ and $\lim_{\beta\to\infty} \beta/L_k(\beta)=0$
for $k\ge 1$).
\end{proof}

\noindent
Note that the assumptions hold for periodic operators by choosing $\ell$ to be the period.
Furthermore, inspection of the proof shows that if $|\beta|\to\infty$, then
$\rho=o(\beta^2 L_n(\beta)^{-2})$ can be replaced by $\rho=O(\beta^2 L_n(\beta)^{-2})$.

\section{Appendix: Averaging ordinary differential equations}
\label{sec:ode}

In Section~\ref{sec:epa} we have reduced everything to the question if certain
ordinary differential equation have bounded solutions or not. In this section
we collect the required results for these ordinary differential equations. The results
are mainly straightforward generalizations of the corresponding results from \cite{kms}.
All proofs are elementary and we give them for the sake of completeness.

\begin{lemma}\label{lem:boundsol}
Suppose $\rho(x)>0$ (or $\rho(x)<0$) is not integrable near $b$. Then the equation
\be
\varphi'(x) = \rho(x) \bigg(A \sin^2\varphi(x) + \cos\varphi(x)\sin\varphi(x)
+ B \cos^2\varphi(x)\bigg) + o(\rho(x))
\ee
has only unbounded solutions if $4 A B > 1$ and only bounded solutions if $4 A B < 1$.
In the unbounded case we have
\be
\varphi(x) = \left(\frac{\sgn(A)}{2} \sqrt{4 A B-1} + o(1) \right) \int^x \rho(t) dt.
\ee
\end{lemma}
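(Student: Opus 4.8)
The plan is to eliminate the coefficient $\rho$ by passing to the new independent variable $s=\int^x \rho(t)\,dt$, which by the assumed non-integrability of $\rho$ ranges over a half-line tending to $+\infty$ when $\rho>0$ (the case $\rho<0$ is reduced to this by reversing the orientation, which leaves the bounded/unbounded dichotomy unchanged). In the variable $s$ the equation becomes the autonomously perturbed first-order equation $\frac{d\varphi}{ds}=f(\varphi)+g(s)$, where $f(\varphi)=A\sin^2\varphi+\sin\varphi\cos\varphi+B\cos^2\varphi$ is $\pi$-periodic and $g(s)\to 0$. Since the counting of zeros of the Wronskian only sees $\varphi \bmod \pi$, everything is then governed by the zeros and sign of the single periodic function $f$.

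First I would analyze the zeros of $f$. On the set where $\cos\varphi\neq 0$, the equation $f(\varphi)=0$ is equivalent, after dividing by $\cos^2\varphi$ and setting $t=\tan\varphi$, to the quadratic $A t^2+t+B=0$ with discriminant $1-4AB$; on the complementary set $\cos\varphi=0$ one has $f=A$. Hence for $4AB>1$ the quadratic has no real root and $A\neq 0$, so $f$ never vanishes and keeps the constant sign $\sgn(A)$ with $|f|\ge m>0$ for some constant $m$; while for $4AB<1$ the function $f$ has two simple zeros per period and changes sign. This is the clean dichotomy that drives the two regimes.

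In the unbounded regime $4AB>1$, since $|f|\ge m$ and $|g(s)|<m/2$ for all large $s$, the right-hand side keeps the sign $\sgn(A)$ and $\frac{d\varphi}{ds}$ is bounded away from $0$, forcing $|\varphi|\to\infty$. In the bounded regime $4AB<1$ the delicate point is that the slowly vanishing perturbation might accumulate and push $\varphi$ across the barriers sitting at the unstable equilibria of $f$; I would handle this by a super/sub-solution comparison. Fix $\epsilon>0$ small and $S$ with $|g(s)|<\epsilon$ for $s>S$. Because the inequality $4AB<1$ is \emph{strict}, the perturbed autonomous functions $f\pm\epsilon$ still attain both signs and therefore still possess genuine equilibria, so the comparison equations $\frac{d\psi_\pm}{ds}=f(\psi_\pm)\pm\epsilon$ have only bounded solutions. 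Since $f$ is Lipschitz, the scalar ODE comparison principle traps the solution $\varphi$ between $\psi_-$ and $\psi_+$ on $[S,\infty)$, yielding boundedness.

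Finally, for the asymptotic rate in the unbounded case, the same substitution $t=\tan\varphi$ gives the period integral
\[
\int_0^\pi \frac{d\varphi}{f(\varphi)}=\int_{-\infty}^{\infty}\frac{dt}{A t^2+t+B}=\frac{2\pi\,\sgn(A)}{\sqrt{4AB-1}},
\]
by completing the square. Since $g(s)\to 0$ and $\varphi$ is eventually monotone to $\pm\infty$, the $s$-time needed to advance $\varphi$ through one period tends to $2\pi/\sqrt{4AB-1}$, whence $\varphi(s)/s\to \frac{\sgn(A)}{2}\sqrt{4AB-1}$; substituting back $s=\int^x\rho$ gives the claimed formula with the $o(1)$ error. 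I expect the main obstacle to be precisely the boundedness argument, i.e.\ excluding that the vanishing $g$ slowly transports $\varphi$ over the unstable equilibria; the strictness of $4AB<1$, which guarantees that $f\pm\epsilon$ retains equilibria for small $\epsilon$, is exactly what makes the super/sub-solution trap work.
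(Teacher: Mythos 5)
Your proof is correct, and its engine is the same as the paper's: the dichotomy is decided by whether the $\pi$-periodic right-hand side $f(\varphi)=A\sin^2\varphi+\sin\varphi\cos\varphi+B\cos^2\varphi$ is sign-definite ($4AB>1$), forcing monotone escape at a definite rate, or changes sign ($4AB<1$), in which case the points where $f<-\eps$ and $f>\eps$ act as barriers mod $\pi$ that trap the solution despite the $o(\rho)$ perturbation. The differences lie in the normal form and in how the rate is extracted. The paper writes $f(\varphi)=\frac{A+B}{2}+\frac{\sqrt{1+(A-B)^2}}{2}\cos\big(2(\varphi-\varphi_0)\big)$, so the dichotomy appears as $|A+B|\lessgtr\sqrt{1+(A-B)^2}$, and it obtains the asymptotics by the explicit conjugation $\tilde\psi=\arctan\big(\sqrt{(C-D)/(C+D)}\tan\psi\big)$ with $C=A+B$, $D=\sqrt{1+(A-B)^2}$, which differs from $\psi$ by less than $\pi$ and satisfies $\tilde\psi'=\frac{\rho}{2}\sgn(C+D)\sqrt{C^2-D^2}+o(\rho)$, so the constant $\frac{1}{2}\sgn(A)\sqrt{4AB-1}$ is read off directly. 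You instead read the dichotomy off the discriminant of $At^2+t+B$ under $t=\tan\varphi$ and obtain the rate as a rotation number from the period integral $\int_0^\pi f(\varphi)^{-1}\,d\varphi=2\pi\sgn(A)/\sqrt{4AB-1}$; this is the same algebra (completing the square) packaged differently, and it needs the small extra remark—which you supply—that the traversal time per period converges because $|f|\ge m>0$ makes the perturbed integrand converge uniformly. Both routes are sound; the paper's conjugation delivers the $o(1)$ error in the rate with slightly less bookkeeping, while your period-integral argument makes the origin of the constant more transparent and dispenses with choosing the phase $\varphi_0$.
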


\begin{proof}
By a straightforward computation we have
$$
A \sin^2(\varphi) + \sin(\varphi) \cos(\varphi) + B \cos^2(\varphi) = \frac{A + B}{2}
+ \frac{\sqrt{1 + (A - B)^2}}{2} \cos(2(\varphi-\varphi_0)).
$$
for some constant $\varphi_0= \varphi_0(A,B)$. Hence $\psi(x)=\varphi(x)-\varphi_0$ satisfies
\be\label{eq:psiode}
\psi'(x) = \rho(x) \bigg(\frac{A + B}{2} + \frac{\sqrt{1 + (A - B)^2}}{2} \cos(2\psi(x)) \bigg)
+ o(\rho(x))
\ee

If $4 A B < 1$, we have $|A + B| < \sqrt{1 + (A - B)^2}$ from which it follows that the
right hand side of our differential equation is strictly negative for $\varphi(x) \pmod \pi$ close to $\pi/2$ and strictly positive if $\varphi(x) \pmod \pi$ close to $0$. Hence any solution remains in such
a strip.

If $4 A B > 1$, we have $|A + B| > \sqrt{1 + (A - B)^2}$ and thus the right hand side is always
positive, $\psi'(x) \ge C \rho(x)$, if $A,B>0$ and always negative, $\psi'(x) \le -C \rho(x)$, if $A,B<0$.
Since $\rho$ is not integrable by assumption, $\psi$ is unbounded.

In order to derive the asymptotics, rewrite (\ref{eq:psiode}) as
$$
\psi'(x) = \rho(x)\left(\frac{C + D}{2} \cos^2(\psi(x)) + \frac{C - D}{2} \sin^2(\psi(x))\right) + o(\rho(x)),\\
$$
where $C = A + B$ and  $D = \sqrt{1 + (A-B)^2}$.
Now, introduce
$$
\ti{\psi}(x) = \arctan\left(\sqrt{\frac{C -D}{C + D}} \tan(\psi(x))\right)
$$
and observe $|\psi - \ti{\psi}|<\pi$. Moreover,
$$
\ti{\psi}'(x) = \frac{\rho(x)}{2} \sgn(C + D) \sqrt{C^2 - D^2} + o(\rho(x)).
$$
Hence the claim follows since by assumption $4 A B > 1$, which implies
$\sgn(C + D) = \sgn(A)$.
\end{proof}

\noindent
We will also need the case where $A=1$ and $B$ depends on $x$ but not necessarily converge to a
limit as $x\to b$. However, by a simple sub/super-solution argument we obtain
from our lemma

\begin{corollary}\label{cor:boundsol}
Suppose $\rho(x)>0$ is not integrable near $b$. Then all solutions of the equation
\be
\varphi' = \rho \bigg(\sin^2(\varphi) + \sin(\varphi)\cos(\varphi) - B \cos^2(\varphi)\bigg)
+ o(\rho)
\ee
tend to $\infty$ as $x\to b$ if $B(x)\le B_0$ for some $B_0$ with
$B_0 < -\frac{1}{4}$ and are bounded below if $B(x)\ge B_0$ for some
$B_0$ with $B_0 > -\frac{1}{4}$.
\end{corollary}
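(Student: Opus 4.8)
The plan is to deduce both assertions from Lemma~\ref{lem:boundsol} by a comparison (sub/super-solution) argument, after matching the equation to the normal form of the lemma. Writing $P_C(\varphi)=\sin^2\varphi+\sin\varphi\cos\varphi-C\cos^2\varphi$, the right-hand side here is $\rho\,P_{B(x)}(\varphi)+o(\rho)$, so the relevant constant-coefficient equations are $\varphi'=\rho\,P_C(\varphi)+o(\rho)$, which is exactly the lemma's equation with $A=1$ and the lemma's $B$ equal to $-C$. Since $4\cdot 1\cdot(-C)=-4C$, the lemma says all solutions tend to $+\infty$ when $C<-\tfrac14$ and stay bounded when $C>-\tfrac14$; the whole point is to transfer these two verdicts to the $x$-dependent coefficient $B(x)$.

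First I would treat the oscillatory case $B(x)\le B_0<-\tfrac14$. Comparing the given equation with the constant-coefficient one carrying the \emph{same} $o(\rho)$ term, the difference of the two right-hand sides is $\rho\,(B_0-B(x))\cos^2\varphi\ge 0$, so every solution $\varphi$ is a super-solution of $\bar\varphi'=\rho\,P_{B_0}(\bar\varphi)+o(\rho)$. Because $B_0<-\tfrac14$ this comparison equation lies in the unbounded regime of Lemma~\ref{lem:boundsol} (here $-4B_0>1$), so all of its solutions tend to $+\infty$. Taking $\bar\varphi$ with $\bar\varphi(x_0)=\varphi(x_0)$ and invoking the elementary scalar ODE comparison principle (the vector field is smooth, hence locally Lipschitz in $\varphi$) gives $\varphi(x)\ge\bar\varphi(x)\to\infty$. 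The non-integrability of $\rho$ near $b$, our standing hypothesis, is exactly what the lemma uses to turn a strictly positive drift into divergence.

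For the nonoscillatory case $B(x)\ge B_0>-\tfrac14$ the key observation is that the term $-B(x)\cos^2\varphi$ vanishes precisely where $\cos\varphi=0$, so the drift there is immune to the (possibly unbounded) size of $B(x)$: at any point with $\varphi\equiv\tfrac\pi2\pmod\pi$ one has $P_{B(x)}(\varphi)=1$, whence $\varphi'=\rho+o(\rho)>0$ for $x$ close enough to $b$. Thus each line $\varphi=k\pi-\tfrac\pi2$ is crossed only upward, so the constant function equal to such a value is a strict sub-solution and a solution starting above a given line stays above it; this gives the stated lower bound. To also pin $\varphi$ from above (so that it is genuinely trapped, as relative nonoscillation requires), I would fix $B_1$ with $-\tfrac14<B_1<B_0$; then the difference of right-hand sides is $\rho\,(B_1-B(x))\cos^2\varphi\le 0$, making $\varphi$ a sub-solution of the constant-$B_1$ equation, which by Lemma~\ref{lem:boundsol} is in the bounded regime, and comparison bounds $\varphi$ above.

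The main obstacle is purely technical: carrying the $o(\rho)$ error through the passage from constant to $x$-dependent coefficient. This is resolved by always comparing against a constant that is strictly inside the critical value $-\tfrac14$ (and, for the upper bound, strictly below $B_0$), so that the residual drift has a definite sign and the error is dominated for $x$ near $b$, together with the standard monotonicity of scalar-ODE solutions in their initial data. I would finally remark that, since the $\sin^2\varphi$-coefficient is $+1>0$, the barrier at $\cos\varphi=0$ forces \emph{bounded below} with no restriction on $B$ at all; the hypothesis $B_0>-\tfrac14$ is precisely what excludes the divergent regime and thereby upgrades the statement to full boundedness.
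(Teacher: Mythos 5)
Your proof is correct and follows exactly the route the paper intends: the paper derives Corollary~\ref{cor:boundsol} from Lemma~\ref{lem:boundsol} by precisely the ``simple sub/super-solution argument'' you carry out, comparing against constant-coefficient equations with parameters strictly on one side of the critical value $-\tfrac14$ so the $o(\rho)$ error is absorbed. Your additional observations (the barrier at $\cos\varphi=0$ giving the lower bound with no condition on $B$, and the upper bound via a sub-solution comparison with $B_1\in(-\tfrac14,B_0)$) are sound and in fact yield slightly more than the stated conclusion.
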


\noindent
In addition, we also need to look at averages:
Let $\ell > 0$, and denote by
\be
\ol{g}(x) = \frac{1}{\ell} \int_x^{x+\ell} g(t) dt.
\ee
the average of $g$ over an interval of length $\ell$.

\begin{lemma}\label{lem:aver}
Let $\varphi$ obey the equation
\be
\varphi'(x) = \rho(x) f(x) + o(\rho(x)), \qquad x\in(a,\infty),
\ee
where $f(x)$ is bounded. If
\be\label{condrho}
\frac{1}{\ell} \int_0^\ell \left|\rho(x+t) -\rho(x) \right| dt = o(\rho(x))
\ee
then
\be
\ol{\varphi}'(x) = \rho(x) \ol{f}(x) + o(\rho(x))
\ee

Moreover, suppose $\rho(x)=o(1)$. If $f(x)= A(x) g(\varphi(x))$, where $A(x)$ is bounded
and $g(x)$ is bounded and Lipschitz continuous, then
\be
\ol{f}(x)=  \ol{A}(x) g(\ol{\varphi}) + o(1).
\ee
\end{lemma}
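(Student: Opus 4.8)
The plan is to build everything on the elementary identity $\ol{\varphi}'(x) = \ol{\varphi'}(x)$, which holds because averaging over a window of fixed length turns $\ol{\varphi}$ into a difference quotient: differentiating $\ol{\varphi}(x) = \frac{1}{\ell}\int_x^{x+\ell}\varphi(t)\,dt$ gives $\ol{\varphi}'(x) = \frac{1}{\ell}\big(\varphi(x+\ell)-\varphi(x)\big) = \frac{1}{\ell}\int_x^{x+\ell}\varphi'(t)\,dt$. Substituting the hypothesis $\varphi' = \rho f + o(\rho)$ and subtracting the target $\rho(x)\ol{f}(x) = \frac{1}{\ell}\int_x^{x+\ell}\rho(x) f(t)\,dt$, the first claim reduces to showing that both
\[
\frac{1}{\ell}\int_x^{x+\ell}\big(\rho(t)-\rho(x)\big) f(t)\,dt
\quad\text{and}\quad
\frac{1}{\ell}\int_x^{x+\ell} o(\rho(t))\,dt
\]
are $o(\rho(x))$. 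Before estimating these I would record the auxiliary consequence of (\ref{condrho}) that $\frac{1}{\ell}\int_x^{x+\ell}\rho(t)\,dt = \rho(x)(1+o(1))$ (we may assume $\rho>0$, as in the applications): writing $\rho(t)=\rho(x)+(\rho(t)-\rho(x))$ and bounding the remainder by $\frac{1}{\ell}\int_0^\ell\abs{\rho(x+s)-\rho(x)}\,ds = o(\rho(x))$ does it.

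For the first part, the first integral is immediately handled: since $f$ is bounded by some $M$, it is dominated in absolute value by $M\,\frac{1}{\ell}\int_0^\ell\abs{\rho(x+s)-\rho(x)}\,ds$, which is $o(\rho(x))$ by (\ref{condrho}). For the second integral I would write the error as $e(t)$ with $\abs{e(t)}\le\eps(t)\rho(t)$, $\eps(t)\to 0$, pull out $\sup_{t\in[x,x+\ell]}\eps(t)$ (which tends to $0$ as $x\to\infty$ since the window has fixed length and moves off to infinity), and then invoke the auxiliary estimate to bound $\frac{1}{\ell}\int_x^{x+\ell}\eps(t)\rho(t)\,dt \le \big(\sup\eps\big)\,\rho(x)(1+o(1)) = o(\rho(x))$. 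This yields $\ol{\varphi}' = \rho\,\ol{f} + o(\rho)$.

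For the second part I would first control the oscillation of $\varphi$ across the averaging window. Since $A$ and $g$ are bounded, $f = A\,g(\varphi)$ is bounded, so $\varphi' = O(\rho)$; integrating and using the auxiliary estimate gives $\sup_{t\in[x,x+\ell]}\abs{\varphi(t)-\varphi(x)} = O(\rho(x))$, and likewise $\abs{\ol{\varphi}(x)-\varphi(x)} = O(\rho(x))$, both $o(1)$ because $\rho=o(1)$. Lipschitz continuity of $g$ then gives $\sup_{t\in[x,x+\ell]}\abs{g(\varphi(t))-g(\ol{\varphi}(x))} = O(\rho(x)) = o(1)$, so in $\ol{f}(x) = \frac{1}{\ell}\int_x^{x+\ell}A(t)\,g(\varphi(t))\,dt$ I may replace $g(\varphi(t))$ by the constant $g(\ol{\varphi}(x))$ at the cost of $\frac{1}{\ell}\int_x^{x+\ell}A(t)\,o(1)\,dt = o(1)$ (using boundedness of $A$); factoring out the constant leaves $g(\ol{\varphi}(x))\,\ol{A}(x) + o(1)$, as required.

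The main obstacle, and really the only subtle point, is the treatment of the error term $o(\rho)$ under averaging in the first part: one cannot naively pass $o(\rho(t))$ through the integral to $o(\rho(x))$ unless $\rho(t)$ and $\rho(x)$ are comparable uniformly across the window, and this is exactly what hypothesis (\ref{condrho}) supplies through the auxiliary estimate $\frac{1}{\ell}\int_x^{x+\ell}\rho = \rho(x)(1+o(1))$. Once that estimate is in hand, everything else is bookkeeping with the boundedness of $f$ (resp. $A$) and the Lipschitz bound on $g$.
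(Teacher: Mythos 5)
Your proof is correct and follows essentially the same route as the paper's: the difference-quotient identity $\ol{\varphi}'(x)=\frac{1}{\ell}(\varphi(x+\ell)-\varphi(x))$, splitting off $\rho(x)\ol{f}(x)$ and absorbing the remainder via (\ref{condrho}) and boundedness of $f$, and for the second claim the decomposition $\ol{f}=\ol{A}\,g(\ol{\varphi})+\frac{1}{\ell}\int_x^{x+\ell}A(t)(g(\varphi(t))-g(\ol{\varphi}(x)))\,dt$ controlled by the Lipschitz bound and $\sup_{0\le s\le\ell}|\varphi(x+s)-\ol{\varphi}(x)|=O(\rho(x))$. The only difference is that you spell out, via the auxiliary estimate $\frac{1}{\ell}\int_x^{x+\ell}\rho=\rho(x)(1+o(1))$, why the $o(\rho)$ error is preserved under averaging, which the paper merely asserts.
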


\begin{proof}
To show the first statement observe
\begin{align*}
\ol{\varphi}'(x) &= \frac{\varphi(x+\ell)-\varphi(x)}{\ell} = \frac{1}{\ell} \int_x^{x+\ell}
\rho(t) f(t) dt + o(\rho(x))\\
&= \rho(x) \ol{f}(x) + \frac{1}{\ell} \int_x^{x+\ell}
(\rho(t) -\rho(x)) f(t) dt + o(\rho(x)).
\end{align*}
Now the first claim follows from (\ref{condrho}) since $f$ is bounded. Note that
(\ref{condrho}) implies that the $o(\rho)$ property is preserved under averaging.

To see the second, we use
\begin{align*}
\ol{f}(x) &= \frac{1}{\ell} \int_x^{x+\ell} A(t) g(\varphi(t)) dt\\
&= \ol{A}(x) g(\ol{\varphi}(x)) + \frac{1}{\ell} \int_x^{x+\ell} A(t) (g(\varphi(t)) - g(\ol{\varphi}(x))) dt.
\end{align*}
Since $g$ is Lipschitz we can use the mean value theorem together with
$$
|\varphi(x+t)) - \ol{\varphi}(x)| \le C \sup_{0 \le s \le \ell} \rho(x+s)
$$
to finish the proof.
\end{proof}

\noindent
Condition (\ref{condrho}) is a strong version of saying that $\ol{\rho}(x)=\rho(x)(1+o(1))$ (it is
equivalent to the latter if $\rho$ is monotone). It will be typically fulfilled if $\rho$
decreases (or increases) polynomially (but not exponentially). For example, the condition holds if
$\sup_{t\in[0,1]} \frac{\rho'(x+t)}{\rho(x)} \to 0$.

We have the next result

\begin{corollary}\label{cor:aver}
Let $\varphi$ obey the equation
\be
\varphi' = \rho \bigg(A \sin^2(\varphi) + \sin(\varphi) \cos(\varphi)
+ B \cos^2(\varphi)\bigg) + o(\rho)
\ee
with $A, B$ bounded functions and assume that $\rho=o(1)$ satisfies (\ref{condrho}).
Then the averaged function $\ol{\varphi}$ obeys the equation
\be
\ol{\varphi}' = \rho \bigg(\ol{A} \sin^2(\ol{\varphi}) + \sin(\ol{\varphi}) \cos(\ol{\varphi})
+ \ol{B} \cos^2(\ol{\varphi})\bigg) + o(\rho).
\ee
\end{corollary}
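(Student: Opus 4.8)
The plan is to reduce everything to the two-part averaging Lemma~\ref{lem:aver}, treating the right-hand side as a bounded function of $x$ assembled from bounded coefficients multiplied by Lipschitz trigonometric expressions in $\varphi$. All the real analytic content has already been isolated in that lemma, so this step is pure bookkeeping.

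First I would set
\[
f(x) = A(x)\sin^2(\varphi(x)) + \sin(\varphi(x))\cos(\varphi(x)) + B(x)\cos^2(\varphi(x)),
\]
so that the hypothesis reads $\varphi' = \rho f + o(\rho)$. Since $A$ and $B$ are bounded and the trigonometric factors are bounded, $f$ is bounded, and the first part of Lemma~\ref{lem:aver} (which invokes precisely condition (\ref{condrho})) applies to give
\[
\ol{\varphi}'(x) = \rho(x)\,\ol{f}(x) + o(\rho(x)).
\]

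Next I would compute $\ol{f}$ using the second part of Lemma~\ref{lem:aver}, which is where the assumption $\rho=o(1)$ is used. Each of the three summands of $f$ has the form $A(x)\,g(\varphi(x))$ with $A$ bounded and $g\in\{\sin^2,\ \sin\cos,\ \cos^2\}$ bounded and Lipschitz (the middle term has the constant coefficient $A\equiv 1$). Applying the lemma term by term and using linearity of the average yields
\[
\ol{f}(x) = \ol{A}(x)\sin^2(\ol{\varphi}(x)) + \sin(\ol{\varphi}(x))\cos(\ol{\varphi}(x)) + \ol{B}(x)\cos^2(\ol{\varphi}(x)) + o(1).
\]
Substituting this into the expression for $\ol{\varphi}'$ and noting that $\rho\cdot o(1)=o(\rho)$ absorbs the remainder into the $o(\rho)$ term gives the claimed equation.

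I do not expect a genuine obstacle, since the substance is carried entirely by Lemma~\ref{lem:aver}; the only points demanding care are verifying that $f$ is bounded so the first part applies, that each trigonometric monomial is bounded and Lipschitz so the second part applies, and that the three separate $o(1)$ errors, once multiplied by $\rho$, combine into a single $o(\rho)$ remainder. The two standing hypotheses $\rho=o(1)$ and (\ref{condrho}) are exactly the inputs required by the two halves of the lemma.
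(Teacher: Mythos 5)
Your proposal is correct and is exactly the argument the paper intends: the corollary is stated without proof as an immediate consequence of Lemma~\ref{lem:aver}, obtained by applying its first part to the bounded right-hand side and its second part termwise to the three Lipschitz trigonometric monomials, just as you do. The only points worth spelling out -- boundedness of $f$, linearity of the average, and absorbing $\rho\cdot o(1)$ into $o(\rho)$ -- are all handled in your write-up.
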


\noindent
Note that in this case $\varphi$ is bounded (above/below) if and only if $\ol{\varphi}$ is bounded
(above/below).
Furthermore, note that if $A(x)$ has a limit, $A(x)=A_0+o(1)$, then $\ol{A}(x)$ can be replaced by
the limit $A_0$.

\section{Appendix: Periodic operators}
\label{sec:floq}

We will now suppose that $r(x)$, $p(x)$, and $q(x)$ are $\ell$-periodic functions. The purpose of
this section is to recall some basic facts from Floquet theory in order to compute the critical
coupling constant for periodic operators in terms of the derivative of the Floquet discriminant.
A classical reference with further details is \cite{ea}.

Denote by $c(z,x)$, $s(z,x)$ a fundamental system of solutions of $\tau u = z u$
corresponding to the initial conditions $c(z,0)=p(0) s'(z,0)=1$,
$s(z,0)= p(0) c'(z,0)=0$. One then calls
\be
M(z) =
\begin{pmatrix} c(z,\ell) & s(z,\ell) \\
p(\ell) c'(z,\ell) & p(\ell) s'(z,\ell) \end{pmatrix}
\ee
the monodromy matrix. Constancy of the Wronskian, $W(c(z),s(z)) = 1$, implies
$\det M(z)=1$ and defining the Floquet discriminant by
$$
D(z) = \tr(M(z)) = c(z,\ell) + p(\ell) s'(z,\ell),
$$
the eigenvalues $\rho_\pm$ of $M$ are called Floquet multipliers,
\be
\rho_\pm(z) =\frac{D(z)  \pm \sqrt{D(z)^2-4}}{2}, \qquad \rho_+(z)\rho_-(z)=1,
\ee
where the branch of the square root is chosen such that $|\rho_+(z)|\le1$.
In particular, there are two solutions
\be
u_\pm(z,x) = c(z,x) + m_\pm(z) s(z,x),
\ee
the Floquet solutions, satisfying
\be
\begin{pmatrix} u_\pm(z,\ell) \\ p(\ell) u_\pm'(z,\ell) \end{pmatrix} =
\rho_\pm(z) \begin{pmatrix} u_\pm(z,0) \\ p(0) u_\pm'(z,0) \end{pmatrix} =
\rho_\pm(z) \begin{pmatrix} 1 \\ m_\pm(z) \end{pmatrix} .
\ee
Here
\be
m_\pm(z) = \frac{\rho_\pm(z) - c(z,\ell)}{s(z,\ell)}
\ee
are called Weyl $m$-functions. The Wronskian of $u_+$ and $u_-$ is given by
\be\label{Wupum}
W(u_-(z),u_+(z))= m_+(z)-m_-(z)= \frac{\sqrt{D(z)^2-4}}{s(z,\ell)}.
\ee
The functions $u_\pm(z,x)$ are exponentially decaying
as $x\to\pm\infty$ if $|\rho_+(z)|<1$, that is, $|D(z)| > 2$, and are bounded if $|\rho_+(z)|=1$,
that is, $|D(z)| \le 2$. Note that $u_+(z)$ and $u_-(z)$ are linearly independent for $|D(z)|\neq 2$.
The spectrum of $H_0$ is purely absolutely continuous and given by
\be
\sig(H_0) = \{ \lam\in\R\,|\, |D(\lam)| \leq 2 \} = \bigcup_{n=0}^\infty [E_{2n},E_{2n+1}].
\ee
It should be noted that $m_\pm(z)$ (and hence also $u_\pm(z,x)$) are meromorphic in
$\C\backslash \sig(H_0)$ with precisely one of them having a simple pole at the zeros
of $s(z,\ell)$ if the zero is in $\R\backslash \sig(H_0)$. If the zero is at a band edge $E_n$ of
the spectrum, both $m_\pm(z)$ will have a square root type singularity.

\begin{lemma}\label{lem:dotD}
For any $z\in\C$ we have
\be
\dot{D}(z)= - s(z,\ell) \int_0^\ell u_+(z,t) u_-(z,t) r(t) dt,
\ee
where the dot denotes a derivative with respect to $z$.
\end{lemma}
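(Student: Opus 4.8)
The plan is to realize $M(z)$ as the value at $x=\ell$ of the fundamental matrix of the first-order system and to differentiate with respect to $z$ by variation of constants. Writing $y=(u,p u')^\top$, the equation $\tau u = z u$ becomes $y'=A(z,x)y$ with
\be
A(z,x) = \begin{pmatrix} 0 & p^{-1} \\ q - z r & 0 \end{pmatrix}, \qquad
\partial_z A(z,x) = \begin{pmatrix} 0 & 0 \\ -r & 0 \end{pmatrix}.
\ee
Let $U(z,x)$ be the solution of $U'=AU$ with $U(z,0)=\id$, so that $U(z,\ell)=M(z)$, the first column of $U$ is $(c,p c')^\top$ and the second is $(s,p s')^\top$. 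Differentiating $U'=AU$ and solving the resulting inhomogeneous equation by variation of constants (using $\dot U(z,0)=0$) gives $\dot M = M\int_0^\ell U^{-1}(\partial_z A)\,U\,dt$, whence $\dot D = \tr\dot M = \int_0^\ell \tr\big(M U^{-1}(\partial_z A)U\big)\,dt$.

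The next step is to diagonalize $M$ via the Floquet solutions. For $|D(z)|\neq 2$ the vectors $(1,m_\pm)^\top$ are eigenvectors of $M$ with eigenvalues $\rho_\pm$, so with $P$ the matrix of these eigenvectors one has $M = P\,\mathrm{diag}(\rho_-,\rho_+)\,P^{-1}$; moreover $V:=UP$ is exactly the matrix whose columns are $(u_\pm,p u_\pm')^\top$, with $\det V = W(u_-,u_+)=:w_0$ constant in $x$. Inserting these factorizations and using cyclicity of the trace collapses the integrand to $\tr\big(\mathrm{diag}(\rho_-,\rho_+)\,V^{-1}(\partial_z A)V\big)$. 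A direct $2\times2$ computation with the explicit $\partial_z A$ yields
\be
V^{-1}(\partial_z A)V = \frac{r}{w_0}\begin{pmatrix} u_+ u_- & u_+^2 \\ -u_-^2 & -u_- u_+ \end{pmatrix},
\ee
so the weighted trace equals $\frac{r}{w_0}(\rho_- - \rho_+)\,u_+ u_-$ and hence $\dot D = \frac{\rho_- - \rho_+}{w_0}\int_0^\ell u_+(z,t)u_-(z,t)r(t)\,dt$.

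It then remains to evaluate the prefactor. From $\rho_\pm = \tfrac12(D\pm\sqrt{D^2-4})$ we get $\rho_- - \rho_+ = -\sqrt{D^2-4}$, while (\ref{Wupum}) gives $w_0 = m_+ - m_- = \sqrt{D^2-4}/s(z,\ell)$; the branch of the square root cancels and $\frac{\rho_- - \rho_+}{w_0} = -s(z,\ell)$, which is the asserted identity. The one point needing care — and the main obstacle — is that the diagonalization degenerates at the band edges $|D(z)|=2$, where $u_+$ and $u_-$ become linearly dependent ($w_0=0$) and $m_\pm$ develop square-root singularities. I would dispose of this by analytic continuation: both $\dot D(z)$ and $s(z,\ell)\int_0^\ell u_+ u_- r\,dt$ are entire in $z$ (the apparent poles of $u_+u_-$ at zeros of $s(z,\ell)$ are removable after multiplication by $s(z,\ell)$, since $s\,u_+u_-$ rewrites through $m_++m_- = (p s'-c)/s$ and $m_+m_- = (1-cD+c^2)/s^2$ as an entire expression), so an identity valid on the open set $\{|D|\neq 2\}$ extends to all of $\C$.
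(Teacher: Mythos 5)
Your proof is correct, but it takes a genuinely different route from the paper's. The paper differentiates the Wronskian identity: integrating (\ref{dwr}) for $u(z)$ and $v(z_1)$ over a period, dividing by $z_1-z$ and letting $z_1\to z$ gives $W_\ell(\dot v(z),u(z))-W_0(\dot v(z),u(z))=\int_0^\ell u\,v\,r\,dt$; choosing $u=u_-$, $v=u_+$ and using the quasi-periodicity $u_+(\ell)=\rho_+u_+(0)$ together with $\rho_+\rho_-=1$ makes the boundary terms collapse to $\dot\rho_+\rho_-\,W(u_+,u_-)=-\dot D\,W(u_-,u_+)/\sqrt{D^2-4}$, and (\ref{Wupum}) finishes it in two lines. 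You instead compute the full $z$-derivative of the monodromy matrix by variation of constants, $\dot M=M\int_0^\ell U^{-1}(\partial_zA)U\,dt$, diagonalize $M$ in the Floquet basis and take the trace; every step checks out (I verified $V^{-1}(\partial_zA)V$ and the prefactor $(\rho_--\rho_+)/w_0=-s(z,\ell)$). Your approach is more computational but also more informative — it yields all of $\dot M$, not just $\dot D$ — and, unlike the paper, you explicitly address the degeneracy at band edges and at zeros of $s(z,\ell)$: your observation that $s(z,\ell)\,u_+u_-$ is entire (via $m_++m_-=(ps'-c)/s$ and $m_+m_-=(1-cD+c^2)/s^2$, the latter being $-p(\ell)c'(z,\ell)/s(z,\ell)$ by $\det M=1$) correctly justifies extending the identity from $\{|D|\neq2,\ s(\cdot,\ell)\neq0\}$ to all of $\C$ by analytic continuation, a point the paper passes over in silence. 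The paper's argument has the advantage of avoiding the degeneracy issue almost entirely, since the Green's-identity step is valid for any pair of solutions and only the final substitution uses the Floquet structure.
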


\begin{proof}
Let $u(z,x)$, $v(z,x)$ be two solutions of $\tau u = z u$, which are differentiable with respect to $z$,
then integrating (\ref{dwr}) with $u_0=u(z)$ and $u_1=v(z_1)$, dividing by $z_1-z$ and
taking $z_1\to z$ gives
$$
W_\ell(\dot{v}(z),u(z)) - W_0(\dot{v}(z),u(z)) = \int_0^\ell u(z,t) v(z,t) r(t) dt.
$$
Now choose $u(z)=u_-(z)$ and $v(z)=u_+(z)$ and evaluate the Wronskians
\begin{align*}
W_\ell(\dot{u}_+(z),u_-(z)) - W_0(\dot{u}_+(z),u_-(z)) &= 
\dot{\rho}_+(z) \rho_-(z) W(u_+(z),u_-(z))\\
&=
- \frac{\dot{D}(z)}{\sqrt{D(z)^2-4}} W(u_-(z),u_+(z))
\end{align*}
to obtain the formula.
\end{proof}

\noindent
By (\ref{Wupum}) $u_+$ and $u_-$ are linearly independent away from the band
edges $E_n$. At a band edge $E_n$ we have
$u_-(E_n,x)=u_+(E_n,x) \equiv u(E_n,x)$ and a second linearly independent
solution is given by
$$
s(E_n,x), \qquad W(u(E_n), s(E_n)) = 1.
$$
Here we assume without loss of generality that $s(E_n,\ell) \ne 0$ (since we are only interested
in open gaps, this can always be achieved by shifting the base point $x_0=0$ if necessary).
It is easy to check that $s(E_n,x+\ell) = \sig_n s(E_n,x) + s(E_n,\ell) u(E_n,x)$, where
$\sig_n=\rho_\pm(E_n) = \sgn(D(E_n))$. In particular, $s(E_n,x)$ is of the form
$$
s(E_n,x)= \ti{s}(E_n,x) + \frac{\sig_n s(E_n,\ell)}{\ell} x\, u(E_n,x), \qquad
\ti{s}(E_n,x+\ell)= \sig_n \ti{s}(E_n,x)
$$
and thus $u(E_n,x)$, $s(E_n,x)$ satisfy the requirements of Definition~\ref{def:regedge} with
$\alpha(x)=1$ and $\beta(x) = \sgn(D(E_n)) s(E_n,\ell) \ell^{-1} x$. Observe that
$\beta(x)>0$ for an upper band edge $E_{2m}$ and $\beta(x)<0$ for a lower band
edge $E_{2m+1}$. Moreover, note that at the bottom of the spectrum $s(E_0,x)$ is just the
second solution computed from $u(E_0,x)$ by virtue of d'Alembert's formula (\ref{eq:dAl}).
Setting
$$
u_0(x) = \sqrt{\frac{|s(E_n,\ell)|}{\ell}} u(E_n,x), \qquad
v_0(x) = \sqrt{\frac{\ell}{|s(E_n,\ell)|}} s(E_n,x)
$$
we have $\beta(x)=\sgn(D(E_n) s(E_n,\ell)) x$ and
$\ell^{-1}\int_0^\ell u_0(t)^2 r(t) dt = \ell^{-2} |\dot{D}(E_n)|$ by Lemma~\ref{lem:dotD}.

\section*{Acknowledgments}
The authors wish to thank K.M.\ Schmidt and F.S.\ Rofe-Bektov for valuable hints with
respect to literature.

\end{document}